\numberwithin{equation}{section}
\newcommand{\CG}{\mathcal G}
\newcommand{\BP}{\mathbb P}
\newcommand{\chx}{\check x}  
\newcommand{\chy}{\check y}  
\newcommand{\dtp}{\omega}
\newcommand{\e}{\epsilon}
\newcommand{\ga}{\gamma}
\newcommand{\de}{\delta}
\newcommand{\br}{\mathbb{R}}
\newcommand{\BN}{\mathbb{N}}
\newcommand{\ik}{\varphi}
\newcommand{\pa}{\partial}
\newcommand{\al}{\alpha}
\newcommand{\la}{\lambda}
\newcommand{\om}{\omega}
\newcommand{\be}{\begin{equation}}
\newcommand{\ee}{\end{equation}}
\newcommand{\bs}{\begin{split}}
\newcommand{\es}{\end{split}}
\newcommand{\dd}{\text{d}}
\newcommand{\nrec}{N_\e^{\text{rec}}}
\newcommand{\frec}{f_\e^{\text{rec}}}
\newcommand{\CA}{\mathcal{A}}
\newcommand{\CB}{\mathcal{B}}
\newcommand{\us}{\mathcal X}
\newcommand{\vs}{\mathcal V}
\newcommand{\s}{\mathcal S}
\newcommand{\R}{\mathcal R}
\newcommand{\BZ}{\mathbb Z}
\newcommand{\Eb}{\mathbb E}
\newcommand{\CY}{\mathcal Y}
\newcommand{\CZ}{\mathcal Z}
\newcommand{\DTB}{\text{DTB}}
\newtheorem{theorem}{Theorem}[section]
\newtheorem{lemma}[theorem]{Lemma}
\newtheorem{corollary}[theorem]{Corollary}
\theoremstyle{definition}
\newtheorem{assumptions}[theorem]{Assumption}
\newtheorem{definition}[theorem]{Definition}
\begin{document}

\title[Analysis of noise]{Analysis of reconstruction from noisy discrete generalized Radon data}
\author[A Katsevich]{Alexander Katsevich$^1$}
\thanks{$^1$This work was supported in part by NSF grant DMS-1906361. Department of Mathematics, University of Central Florida, Orlando, FL 32816 (Alexander.Katsevich@ucf.edu). }

\begin{abstract} 
We consider a wide class of generalized Radon transforms $\mathcal R$, which act in $\mathbb{R}^n$ for any $n\ge 2$  and integrate over submanifolds of any codimension $N$, $1\le N\le n-1$. Also, we allow for a fairly general reconstruction operator $\mathcal A$. The main requirement is that $\mathcal A$ be a Fourier integral operator with a phase function, which is linear in the phase variable. We consider the task of image reconstruction from discrete data $g_{j,k} = (\mathcal R f)_{j,k} + \eta_{j,k}$. We show that the reconstruction error $N_\epsilon^{\text{rec}}=\mathcal A \eta_{j,k}$ satisfies 
$N^{\text{rec}}(\chx;x_0)=\lim_{\epsilon\to0}N_\e^{\text{rec}}(x_0+\epsilon\check x)$, $\check x\in D$.
Here $x_0$ is a fixed point, $D\subset\mathbb{R}^n$ is a bounded domain, and $\eta_{j,k}$ are independent, but not necessarily identically distributed, random variables. $N^{\text{rec}}$ and $N_\e^{\text{rec}}$ are viewed as continuous random functions of the argument $\check x$ (random fields), and the limit is understood in the sense of probability distributions. Under some conditions on the first three moments of $\eta_{j,k}$ (and some other not very restrictive conditions on $x_0$ and $\mathcal A$), we prove that $N^{\text{rec}}$ is a zero mean Gaussian random field and explicitly compute its covariance. We also present a numerical experiment with a cone beam transform in $\mathbb{R}^3$, which shows an excellent match between theoretical predictions and simulated reconstructions.
\end{abstract}

\maketitle

\section{Introduction}

\subsection{Prior work. Main results} Tomographic imaging, which includes X-ray, ultrasound, seismic, and many other tomographies, is commonly used in a wide range of applications such as medical, industrial, security, and others. The task of tomographic image reconstruction can be formulated as follows. Let $\R:\us\to\vs$ denote a linear integral transform, where $\us\subset\br^n$ is the image domain and $\vs\subset\br^n$ is the data domain. Let $f$ be a function supported in $\us$, which represents the object being scanned. Tomographic data represent integrals of $f$ along a family of manifolds (e.g., lines, spheres, etc.). The manifolds are parametrized by points $\dtp\in \vs$, so the data are $(\R f)(\dtp)$, $\dtp\in\vs$. In a continuous setting, the goal is to reconstruct $f(x)$, $x\in\us$ (or some information about $f$, e.g., its singularities) from $(\R f)(\dtp)$, $\dtp\in\vs$. 

In practice, data are always discrete and contain noise. So the task becomes to reconstruct an approximation to $f$ from discrete data $g_j:=(\R f)(\e j)+\eta_j$, $\e j\in\vs$, $j\in\BZ^n$. Here $\e>0$ represents the data step size and $\eta_j$’s represent additive noise. For simplicity, we assume that the step size is the same along each direction. Usually, two most important factors that affect quality of reconstructed images are data discretization and strength of noise in the data. 

Effects of data discretization on various aspects of image quality, e. g. spatial resolution, aliasing artifacts, are now well-understood \cite{nat93, pal95, far04, stef20}. In particular, in a series of articles, the author developed an approach, called \textit{local reconstruction analysis} (LRA), to analyze the resolution with which the singularities (i.e., jump discontinuities) of $f$ are reconstructed from discrete tomographic data in the absence of noise, see \cite{Katsevich2021a, Katsevich2023a, Katsevich_2024_BV} and references therein. In those articles the singularities of $f$ are assumed to lie on a smooth curve in $\br^2$ (and surface in $\br^n$) or a rough curve in $\br^2$, denoted $\s$.  In \cite{Katsevich_aliasing_2023}, LRA theory was advanced to include analysis of aliasing (ripple artifacts) in $\br^2$ at points away from $\s$.

The main idea of LRA is to obtain a simple formula to accurately approximate an image reconstructed from discrete data, $\frec$, in an $\e$-neighborhood of a point, $x_0$. For example, let $f$ be a real-valued function in $\br^2$, $\s\subset\br^2$ be a smooth curve, and $f$ have a jump across $\s$. Let $\frec$ be a filtered backprojection (FBP) reconstruction of $f$ from discretely sampled Radon transform (RT) data with sampling step size, $\e$. Under some mild conditions on $\s$, it is shown in \cite{Katsevich2022a,Katsevich2023a} that one has
\be\label{DTB new use}
\lim_{\e\to0}\frec(x_0+\e\check x)=\Delta f(x_0)\DTB(\check x;x_0),\ x_0\in\s,
\ee
where the limit is uniform with respect to $\check x\in D$ for any bounded set $D$. Here $\Delta f(x_0)$ is the value of the jump of $f$ across $\s$ at $x_0$ and DTB, which stands for the \textit{Discrete Transition Behavior}, is an easily computable function independent of $f$. The DTB function depends only on the curvature of $\s$ at $x_0$. 
When $\e$ is sufficiently small, the right-hand side of \eqref{DTB new use} is an accurate approximation of $\frec$ and the DTB function describes accurately the smoothing of the singularities of $f$ in $\frec$.

As is seen from \eqref{DTB new use}, {\it LRA provides a uniform approximation to $\frec(x)$ in domains of size $\sim\e$, which is of the same order of magnitude as the sampling step size $\e$.} Given the DTB function, one can study local properties of reconstruction from discrete data, such as spatial resolution and strength of artifacts. 

The first extension of LRA to reconstruction from noisy data has been done in \cite{AKW2024}. The authors consider the model $g = \R f + \eta$, where $\R$ is the classical 2D RT, $\R f$ is the measured sinogram with entries $(\R f)_{j,k}$, and $\eta$ is the noise. The entries, $\eta_{j,k}$, of $\eta$ are assumed to be independent, but not necessarily identically distributed. Due to linearity of $\R^{-1}$, the reconstruction error is $\R^{-1}\eta$. Let $N_\e^{\text{rec}}(x)$ denote the reconstruction error, i.e. the FBP reconstruction only from $\eta_{j,k}$. Similarly to previous works on LRA (cf. also \eqref{DTB new use}), the authors consider $O(\e)$-sized neighborhoods around any $x_0\in \br^2$. Let $C(D)$ be the space of continuous functions on $D$, where $D\subset\br^2$ is a bounded domain. The main result of \cite{AKW2024} is that, under suitable assumptions on the first three moments of the $\eta_{j,k}$, the boundary of $D$, and $x_0$, the following limit exists:
\be\label{noise rec 0}
N^{\text{rec}}(\chx;x_0)=\lim_{\e\to0}N_\e^{\text{rec}}(x_0+\e\chx),\ \check x\in D.
\ee
Here, $N^{\text{rec}}$ and $N_\e^{\text{rec}}$ are viewed as $C(D)$-valued random variables (random fields), and the limit is understood in the sense of probability distributions. It is proven also that $N^{\text{rec}}(\chx;x_0)$ is a zero mean Gaussian random field (GRF) and its covariance is computed explicitly. The paper also contains numerical experiments, which show an excellent match between theoretical predictions and simulated reconstructions.

In this paper we extend the results of \cite{AKW2024} to a wide class of generalized RTs $\R$, which act in $\br^n$ for any $n\ge 2$  and integrate over submanifolds of any codimension $N$, $1\le N\le n-1$. Also, we allow for a fairly general reconstruction operator $\CA$. The main requirement is that $\CA$ be a Fourier integral operator (FIO) with a phase function, which is linear in the phase variable. Similarly to \cite{AKW2024} we consider the model $g = \R f + \eta$ and show that the reconstruction error $N_\e^{\text{rec}}=\CA \eta_{j,k}$ satisfies \eqref{noise rec 0}, where $D\subset\br^n$ is a bounded domain and $\eta_{j,k}$ are independent, but not necessarily identically distributed, random variables. As in \cite{AKW2024}, $N^{\text{rec}}$ and $N_\e^{\text{rec}}$ are viewed as $C(D)$-valued random variables, and the limit is understood in the sense of probability distributions. We prove that $N^{\text{rec}}$ is a zero mean GRF and explicitly compute its covariance. We also present a numerical experiment with a cone beam transform in $\br^3$, which shows an excellent match between theoretical predictions and simulated reconstructions.

\subsection{Significance} 
Equations \eqref{DTB new use} and \eqref{noise rec 0} together provide an explicit and accurate local approximation to the image reconstructed from noisy discrete data, $\frec=\CA(\R f+\eta)$. Eq. \eqref{DTB new use} describes the effect of edge smoothing due to data discretization, and  \eqref{noise rec 0} describes the reconstruction error due to random noise. The combined approximation is uniform over domains of size $\sim\e$, i.e. precisely at the scale of {\it native} resolution enabled by available data. Results of this kind are more useful for localized analysis of tomographic reconstruction than more common global analyses, which estimate some global error norm (e.g., $L^2$). Formulas provided by LRA, such as \eqref{DTB new use} and \eqref{noise rec 0}, can be used to study resolution of reconstruction, perform statistical inference about detectability of small details in a reconstructed image, and for many other tasks.

As a first example we consider the task of detection and assessment of lung tumors in CT images (see also \cite{AKW2024}). Typically, malignant lung tumors have rougher boundaries than benign nodules \cite{dhara2016combination,dhara2016differential}. Therefore the roughness of the nodule boundary is a critical factor for accurate diagnosis. Reconstructions from discrete X-ray CT data produce images in which the singularities are smoothed to various degrees. A rough boundary of the tumor may appear smoother in the reconstructed image than it really is. This can lead to a cancerous tumor being misdiagnosed as a benign nodule. Likewise, due to the random noise in the data, the smooth boundary of a benign nodule may appear rougher than it actually is, which may again result in misdiagnosis. This example illustrates the need to accurately quantify the effects of both data discretization and random noise on {\it local} (i.e., near the tumor boundary) tomographic reconstruction.

Another example is the use of CT by the energy industry for imaging of rock samples extracted from wells. The reconstructed image is segmented to identify as accurately as possible the pore space (i.e., voids) inside the sample. This is very challenging, because pore boundaries are fractal, they possess features that are below the scanner native resolution. Then one uses numerical fluid flow simulations {\it inside the identified pore space} to compute permeability of the sample \cite{bss18}. The obtained results are used for formation evaluation and improving of oil recovery \cite{sha17}. Thus, the key step that affects the accuracy of the entire workflow from scanning to fluid flow simulation is CT image segmentation. Errors in locating pore boundaries and, consequently, inaccurate pore space identification lead to incorrect flow simulation and errors in computed permeability \cite{Chhatre2017, Saxena2019, Saxena2019a}. Therefore, as in the previous example, precise quantification of the local (near the boundaries) effects of data discretization and noise on the reconstructed image is of paramount importance.  

\subsection{Related literature. Organization of the paper}
We now discuss related existing literature concerning reconstruction in a stochastic setting and compare it with our findings. A kernel-type estimator of $f$ has been derived in \cite{Korostelev1991,Tsybakov_92} in the setting of a tomography problem with additive random errors in observations. Its minimax optimal rate of convergence to $f$, i.e. the ground truth function, has been established both at a fixed point and in a global $L^2$ norm. The rate of convergence for the maximal deviation of an estimator from its mean has been obtained for similar kernel-type estimators in \cite{Bissantz2014}.  The accuracy of pointwise asymptotically efficient kernel estimator in terms of minimax risk of a probability density $f$ from noise-free RT data sampled on a random grid has been derived in \cite{Cavalier_98,Cavalier_00}. 

An essential common feature of all the works cited above is that convergence is established by incorporating into the reconstruction/estimation algorithm of additional smoothing at a scale $\delta$ significantly larger than the data step size. This smoothing leads to a notable loss of resolution in practical applications. This is also the reason why the cited works assume that the function $f$ being estimated is sufficiently smooth. To compare with our results, we mention two points. First, none of these papers obtain the probability distribution of the reconstructed noise (i.e., the error in the reconstruction), either pointwise or in a domain. Second, our reconstruction is performed and investigated at {\it native} resolution.

Reconstruction in the framework of Bayesian inversion has been explored as well \cite{Monard_19,Siltanen2003,Lassas_09}. Global inversion in the case when continuous data are corrupted by a Gaussian white noise is investigated in \cite{Monard_19}. Using a Gaussian prior (i.e., with Tikhonov regularization), the authors establish the asymptotic normality of the posterior distribution and of the MAP estimator for observables of the kind $\int f(x)\psi(x)\dd x$, where $\psi\in C^\infty$ is a test function. This means that reconstruction is investigated at the scales $\sim1$. 
In contrast, our results quantify the pointwise reconstruction error uniformly over regions of size $O(\e)$. Various other aspects of Bayesian inversion are discussed in \cite{Siltanen2003,Lassas_09}.

Analysis of reconstruction errors using semiclassical analysis is developed in \cite{stef_23}. The goal of the paper is to analyze empirical spatial mean and variance of the noise in the inversion for a single experiment in the limit as the sampling rate $\e$ goes to zero. In contrast, the emphasis of our paper is on the study of the reconstruction error across multiple reconstructions. We obtain the entire probability density function (PDF) of the error in the limit as $\e\to0$ (as opposed to only the mean and variance).

Analysis of noise in reconstructed images is an active area in more applied research as well (see \cite{Noo_noise_2008, Divel2020} and references therein). In this research, the proposed methodologies mostly  combine numerical and semi-empirical approaches. Theoretical derivation of the reconstructed noise PDF in small neighborhoods is not provided.

The paper is organized as follows. In section~\ref{sec:a-prelims}, we describe the setting of the problem and main assumptions. In section~\ref{sec:main_res}, we state the main results, which are broken down into three theorems. In section~\ref{sec:examples} we illustrate how to check the key assumptions for a few common CT geometries. The beginning of the proof of the first result, theorem~\ref{thm:Lyapunov1d}, is in section~\ref{sec:prf beg}. In this section we study the contribution of the leading order term of $\CA$ to the reconstructed image, $N_\e^{\text{rec}}(x_0+\e\chx)$. In section~\ref{sec:LOTs} we study the contribution of the lower order terms of $\CA$ to the reconstruction. In section~\ref{sec:Lyapunov} we finish the proof of theorem~\ref{thm:Lyapunov1d} and prove theorem~\ref{thm:Lyapunov_nd}, our second main result. In section~\ref{sec:thrd thrm prf} we prove our last result, theorem~\ref{GRF_thm}. A numerical experiment is described in section~\ref{sec:numerics}. Finally, the proofs of some auxiliary lemmas are in Appendices \ref{sec:two aux res}--\ref{sec:prf outline}.

\section{Setting of the problem and main assumptions}\label{sec:a-prelims}

A general Radon-type integral transform $\R:\us\to\vs$, where $\us,\vs\subset\br^n$ are some domains, can be written as a Fourier Integral Operator (FIO) with a phase function, which is linear in the frequency variable. Except for a small number of special cases, exact inversion formulas are usually not available. Hence one may be interested in applying an approximate inversion formula in the form of a parametrix. In other cases, one may be interested in enhanced-edge reconstruction \cite{rk, fbh01, hl12, louis16}. Therefore we consider a general reconstruction operator $\CA:\vs\to\us$, which is also an FIO with a linear phase function
\be\label{FIO def}
(\CA g)(x)=(2\pi)^{-N}\int_{\br^N}\int_{\CY}\int_{\CZ} a(x,(y,z),\xi) e^{i\xi\cdot\Phi(x,(y,z))}g(y,z)\dd z\dd y\dd\xi.
\ee
where $\Phi:\us\times\vs\to\br^N$ is a smooth function. 
\begin{assumptions}[Properties of the domains $\us$, $\vs$]\label{ass:domains}
$\hspace{1cm}$ 
\begin{enumerate}
\item $\us\subset\br^n$ and $\vs\subset\br^n$ are bounded domains. 
\item $\vs=\CY\times\CZ$ for some domains $\CY\subset\br^{n-N}$, $\CZ\subset\br^N$.
\end{enumerate}
\end{assumptions}

In our setting, $x\in\us$ are points in the image domain, and $(y,z)\in\vs$ are points in the data (projection) domain. The pair $(y,z)\in\vs$ was denoted $\dtp$ in the Introduction. The representation of $\dtp$, $\dtp=(y,z)$, as a pair is analogous to the classical RT convention $(\al,p)\in S^{n-1}\times\br$. In our case, $y$ is the analog of the variable $\al$ and $z$ is a (multidimensional) analog of the variable $p$.

Denote $\BN_0:=\BN\cup \{0\}$. For convenience, throughout the paper we use the following convention. If a constant $c$ is used in an equation or inequality, the qualifier ‘for some $c>0$’ is assumed. If several $c$ are used in a string of (in)equalities, then ‘for some’ applies to each of them, and the values of different $c$’s may all be different.

\begin{definition} Pick any $\ga\in\br$. The set $S^{\ga}(\us\times\vs\times(\br^N\setminus 0))$ is the vector space of all functions $a(x,\dtp,\xi):\us\times\vs\times\br^N\to\br$ such that
\be\bs\label{symb def}
&|\pa_\xi^m \pa_{(x,\dtp)}^k a(x,\dtp,\xi)|\le c_{m,k}(1+|\xi|)^{\ga-|m|},\\ 
&\hspace{0cm} (x,\dtp)\in\us\times\vs,|\xi|\ge 1,m\in\BN_0^N,k\in\BN_0^{2n};\\ 
&\pa_{(x,\dtp)}^k a\in L^1_{loc}(\us\times\vs\times \br^N),\ k\in\BN_0^{2n}.
\end{split}
\ee
\end{definition}

\begin{assumptions}[Properties of $a$, the amplitude of $\CA$]\label{ass:grt} 
$\hspace{1cm}$ 
\begin{enumerate}
\item One has
\be\bs
&a\in S^{\ga}(\us\times\vs\times(\br^N\setminus 0)) \text{ for some }\ga>-N/2.
\end{split}
\ee
\item The principal symbol of $\CA$, denoted $a_0$, is homogeneous and satisfies: 
\be\bs\label{a0_props}
&a_0(x,\dtp,r\xi)=r^\ga a_0(x,\dtp,\xi),\
r>0,\,(x,\dtp)\in\us\times\vs,\,\xi\in\br^N\setminus 0,\\
&a_0\in S^{\ga}(\us\times\vs\times(\br^N\setminus 0)) .
\end{split}
\ee
\item
$a-a_0\in S^{\ga^\prime}(\us\times\vs\times(\br^N\setminus0))$ for some $\ga^\prime<\ga$.
\end{enumerate}
\end{assumptions}

\begin{assumptions}[Properties of $\Phi$]\label{ass:Phi} 
$\hspace{1cm}$ 
\begin{enumerate}
\item $\Phi:U\to\br^N$ is a smooth function, where $U\subset\br^{2n}$ is a neighborhood of $\us\times\vs$.
\item For each $x\in\us$ and $y\in\CY$ there exists a unique point $z=\Psi(x,y)\in\CZ$ such that $\Phi(x,(y,\Psi(x,y)))\equiv0$.
\item $|\text{det}(\pa_z\Phi(x,(y,z)))|\ge c>0$ for any $z=\Psi(x,y)$ and $(x,y)\in\us\times\CY$. 
\end{enumerate}
\end{assumptions}

Denote $u:=(x,y)$, $a_0(u,z,\xi):=a_0(x,(y,z),\xi)$, and similarly for all other functions which depend on $x$ and $y$. Assumption~\ref{ass:Phi} and the implicit function theorem \cite[Theorem 3.3.1]{IFT_Krantz_2013} imply
\begin{enumerate}
\item The function $z=\Psi(x,y)=\Psi(u)$, $u\in\us\times\CY$, is smooth. 
\item The equation $w=\Phi(u,z)$ can be solved for $z$ and the solution $z(w,u)$ is smooth on the domain $[-\de,\de]\times(\us\times\CY)$, where $\de>0$ is sufficiently small.
\end{enumerate}
In the proof of the second statement we patch together local solutions similarly to \cite[\S 8, exercise 14]{GuillPoll74}.

Define the set
\be\label{Lambda def}
\Lambda:=\{(u,z)\in\us\times\vs:\ z=z(w,u),|w|<\de\}.
\ee
Thus $\Lambda$ is a small neighborhood of the set $\{(u,z)\in\us\times\vs:\ z=\Psi(u)\}$.
Further, define the $N\times N$ matrix function 
\be\label{M def}
Q(u):=\pa_z\Phi(u,z)|_{z=\Psi(u)},\ u\in\us\times\CY.
\ee
By assumption~\ref{ass:Phi}(3), $|\text{det}Q(u)|\ge c>0$ if $u\in\us\times\CY$. Differentiating the identity $\Phi(u,z(w,u))\equiv w$ with respect to $w$ we see that 
\be\label{zwu}\bs
z(w,u)= & \Psi(u)+Q^{-1}(u) w+O(|w|^2),\
|w|\le \de,\ u\in \us\times\CY.
\end{split}
\ee
We suppose that $\de>0$ (and the set $\Lambda$) is sufficiently small so that 
\be\label{small 2nd}
\Vert \pa_w^m z(w,u)\Vert\le c_m,\ |\text{det}(\pa_w z(w,u))|\ge c,\ m\in\BN_0^N,|w|\le\de,\ u\in \us\times\CY,
\ee
where $\Vert\cdot\Vert$ denotes any matrix norm.

One is given discrete data
\be\label{data}\bs
g(y_j,z_k),\ y_j=& \e_y j\in\CY,j\in\BZ^{n-N},\ z_k=\e k\in \CZ,k\in\BZ^N,\\ 
\e/\e_y\equiv & \text{const}.
\end{split}
\ee
Realistic data are usually represented as the sum of a useful signal (the RT of some function) and noise. The reconstruction operator $\CA$ is linear, so we assume that the useful signal is zero and the data consist only of noise: $g(y_j,z_k)=\eta_{j,k}$. 

\begin{assumptions}[Properties of noise $\eta_{j,k}$]\label{ass:noise}
$\hspace{1cm}$ 
\begin{enumerate}
\item $\eta_{j,k}$ are independent (but not necessarily identically distributed) random variables, 
\item One has
\be\label{noise var}
\Eb\eta_{j,k}^2=\e^{2\ga}\e_y^{-(n-N)}\sigma^2(y_j,z_k),\
\Eb|\eta_{j,k}|^3=o(\e^{3\ga}\e_y^{-2(n-N)}),
\ee
where $\sigma(y,z)$ is a Lipschitz continuous, bounded function on $\vs$, and the small-$o$ term is uniform in $j,k$ as $\e\to0$. 
\end{enumerate}
\end{assumptions}

Reconstruction is computed using the well-known filtered backprojection scheme.
\begin{enumerate}
\item Interpolate and, possibly, smooth the data along the $z$ variable:
\be\label{interp}
g_{\text{cont}}(y,z):=\sum_{z_k\in\CZ} \ik\bigg(\frac{z-z_k}\e\bigg) g(y,z_k),\ y=y_j\in\CY.
\ee
Here, $\ik$ is a compactly supported and sufficiently smooth function, which describes the effects of numerical interpolation and smoothing of the data. 
\item Filtering step. Compute
\be\label{flt step}\bs
N_\e^{(1)}(x,y)=&(2\pi)^{-N}\int_{\br^N}\int_{\CZ} a(x,(y,z),\xi) e^{i\xi\cdot\Phi(x,(y,z))}g_{\text{cont}}(y,z)\dd z\dd\xi,\\ 
y=&y_j\in\CY.
\end{split}
\ee
\item Backprojection step. Compute the Riemann sum
\be\label{bp step}
\nrec(x)=\e_y^{n-N}\sum_{y_j\in\CY}N_\e^{(1)}(x,y_j).
\ee
\end{enumerate}

For a domain $U\subset\br^n$, $n=1,2,\dots$, the notation $f\in C^k(\bar U)$ means that $f(x)$ is $k$ times differentiable up to the boundary and $\pa_x^m f \in L^\infty(U)$ for any multi-index $m$, $|m|\le k$. The notation $f\in C_0^k(U)$ means that $f\in C^k(\bar U)$ and $\text{supp}(f)\subset U$.

\begin{assumptions}[Properties of the kernel $\ik$]\label{ass:ker}
$\hspace{1cm}$ 
\begin{enumerate}
\item $\ik\in C_0^M(\CZ)$ for some integer $M>\max(N+\ga+1,n/2)$.
\end{enumerate}
\end{assumptions}

Assumptions~\ref{ass:grt}(1), \ref{ass:Phi}(3), and \ref{ass:ker}(1) imply that the oscillatory integral \eqref{flt step} is well-defined.

We need the following technical definition.

\begin{definition}\label{def:box-dim}\cite[Chapter 2]{Falc2014} Let $F$ be any non-empty bounded subset of $\br^n$ and let $N_\de(F)$ be the smallest
number of sets of diameter at most $\de$ which can cover $F$. The {\it upper box-counting dimension} of $F$ is defined as
\be\label{upper box def}
\overline{\text{dim}}_B(F):= \limsup_{\de\to0} \frac{\log(N_\de(F))}{-\log(\de)}.
\ee
\end{definition}

An easier, but equivalent, definition is obtained by replacing $N_\de(F)$ in \eqref{upper box def} with $N_\de^\prime(F)$, which is the number of cubes of the form $\de[m,m+\vec 1]$, $m\in\BZ^n$, intersecting $F$ \cite[Chapter 2]{Falc2014}. Here $\vec 1=(1,\dots,1)\in\BZ^n$.

Later on, we consider reconstruction in an $\e$-neighborhood of a point $x_0$, which satisfies the following properties:
\begin{assumptions}[Properties of $x_0$]\label{ass:x0} 
\hspace{1cm}
\begin{enumerate} 
\item For each $\xi\in\br^N\setminus 0$, the set 
\be\label{Ydef}
Y_1(x_0,\xi):=\{y\in\CY:\text{det}(\pa_y^2(\xi\cdot\Psi(x_0,y)))=0\}
\ee
has the upper box-counting dimension $\overline{\text{dim}}_B(Y_1(x_0,\xi))<n-N$. 
\item There exist an open set $V\in\CY$ and an open cone $\Xi\subset\br^N\setminus 0$ such that
\be\bs
&a_0(x_0,(y,\Psi(x_0,y)),\xi)\not=0,\ \forall y\in V,\xi\in\Xi,\\
&\sigma(y,\Psi(x_0,y))\not=0,\  \forall y\in V.
\end{split}
\ee
\item For any $\chx\in\br^n \setminus 0$, the set 
\be\label{Bolker}
Y_2(x_0,\chx):=\{y\in \CY: \pa_x\Psi(x_0,y) \chx=0\}
\ee
has Lebesgue measure zero.
\end{enumerate}
\end{assumptions}


For all practical purposes, we can think of Assumption~\ref{ass:x0}(1) as saying that for each $\xi\in\br^N\setminus 0$, the set of points $y\in\CY$ such that the Hessian of the function $y\to\xi\cdot\Psi(x_0,y)$ is degenerate is locally a submanifold of codimension $\ge 1$. This assumption makes sense because the solution set of a generic scalar equation $f(y)=0$, $y\in\CY$, is a surface of codimension 1.

Assumption~\ref{ass:x0}(3) can be viewed as a consequence of a local Bolker condition. In terms of $\Psi$, the incidence relation defined by the generalized RT $\R$ can be written in the form $z-\Psi(x,y)=0$. In this case, the conventional global Bolker condition becomes $\pa_y\Psi(x_1,y)\not=\pa_y\Psi(x_2,y)$ for any $x_1,x_2\in\us$, $x_1\not=x_2$, and $y\in\CY$. Localizing to a neighborhood of some $x_0\in\us$, we get
\be\label{aux_bolk}
\pa_y[\Psi(x_0+t\chx,y)-\Psi(x_0,y)]\not=0,\ 0<|t|\ll1.
\ee
Dividing by $t$ and taking the limit as $t\to0$ suggests the local condition
\be\label{Bolker alt}
\pa_y \pa_x\Psi(x_0,y)\chx\not=0\ \forall \chx\in\br^n\setminus0,y\in\CY.
\ee
Note that the first zero in \eqref{Bolker alt} stands for a $N\times(n-N)$ zero matrix. If \eqref{Bolker alt} holds, then \eqref{Bolker} holds as well. Equations \eqref{aux_bolk} and \eqref{Bolker alt} explain the intuition on which the condition \eqref{Bolker} is based. 

\section{Main results}\label{sec:main_res}

\begin{theorem} \label{thm:Lyapunov1d} Let $x_0\in\us$, $\chx\in\br^n$ be two fixed points. Suppose the domains $\us,\vs$ satisfy Assumption~\ref{ass:domains}, the operator $\CA$ satisfies Assumption~\ref{ass:grt} and Assumption~\ref{ass:Phi}, the random variables $\eta_{k,j}$ satisfy Assumption \ref{ass:noise}, the kernel $\ik$ satisfies Assumption \ref{ass:ker}, and the point $x_0$ satisfies Assumption~\ref{ass:x0}. One has
\be\label{main lim}
N^{\text{rec}}:=\lim_{\e\to0}\nrec(x_0+\e\chx)
\ee
is a Gaussian random variable, and the limit is in the sense of distributions.
\end{theorem}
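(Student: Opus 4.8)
The plan is to show that $\nrec(x_0+\e\chx)$ is, up to an asymptotically negligible remainder, a sum of many independent random variables, and then to verify the Lyapunov condition for the central limit theorem. First I would substitute the interpolation formula \eqref{interp} into the filtering step \eqref{flt step} and the backprojection \eqref{bp step}, so that $\nrec(x_0+\e\chx)$ becomes a linear combination $\sum_{j,k} c_{j,k}(\e;\chx)\,\eta_{j,k}$ of the independent noise variables, where
\be
c_{j,k}(\e;\chx)=\e_y^{n-N}(2\pi)^{-N}\int_{\br^N}\!\int_{\CZ} a\big(x_0+\e\chx,(y_j,z),\xi\big)\,e^{i\xi\cdot\Phi(x_0+\e\chx,(y_j,z))}\,\ik\!\Big(\frac{z-z_k}{\e}\Big)\,\dd z\,\dd\xi.
\ee
The first real task is to extract the asymptotics of $c_{j,k}$ as $\e\to0$. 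After the substitution $z=z(w,u)$ from \eqref{zwu} (legitimate on the set $\Lambda$, which is where $\Phi$ is small and the oscillatory integral is not negligible) and the rescaling $\xi\mapsto\xi/\e$, the phase $\xi\cdot\Phi$ becomes, to leading order, linear in the new $\xi$, and stationary phase / the homogeneity of $a_0$ in Assumption~\ref{ass:grt}(2) should give $c_{j,k}(\e;\chx)\sim \e^{\ga}\e_y^{\,n-N}\,K(y_j,z_k;\chx)$ for an explicit kernel $K$ built from $a_0$, $\Phi$, and $\ik$. Combined with the variance scaling $\Eb\eta_{j,k}^2=\e^{2\ga}\e_y^{-(n-N)}\sigma^2(y_j,z_k)$ in \eqref{noise var}, each term $c_{j,k}\eta_{j,k}$ then has variance of order $\e^{4\ga}\e_y^{\,n-N}\,K^2\sigma^2$, and summing the $\sim \e_y^{-(n-N)}$ many $j$-values and $\sim$ finitely many effective $k$-values produces a total variance of order $\e^{4\ga}$, i.e. a finite nonzero limit after we absorb constants — this is exactly the balance encoded in Assumption~\ref{ass:noise}(2).

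The second task is to split $\CA=a_0+(a-a_0)$ using Assumption~\ref{ass:grt}(3): the lower-order part $a-a_0\in S^{\ga'}$ with $\ga'<\ga$ contributes coefficients of order $\e^{\ga'}$ (up to logarithms), hence a variance of order $\e^{4\ga'}\cdot\e^{-4\ga}\to 0$ relative to the leading term, so it can be absorbed into the remainder; I expect the bulk of the bookkeeping here to be routine once the leading-order estimate is in hand, and this is presumably the content of the later sections referenced in the introduction. The third task is the CLT itself. Having written $\nrec(x_0+\e\chx)=\sum_{j,k}c_{j,k}\eta_{j,k}+o_P(1)$ with the $c_{j,k}\eta_{j,k}$ independent, zero-mean (we never assumed $\Eb\eta_{j,k}=0$, so I would first center: the means contribute $\sum c_{j,k}\Eb\eta_{j,k}$, which must be controlled by a moment hypothesis or shown to vanish in the limit — I'd check whether Assumption~\ref{ass:noise} as stated forces this, or whether the first-moment condition alluded to in the abstract enters here), I would apply Lyapunov's CLT with third moments: the ratio $\big(\sum_{j,k}\Eb|c_{j,k}\eta_{j,k}|^3\big)\big/\big(\sum_{j,k}\Eb|c_{j,k}\eta_{j,k}|^2\big)^{3/2}$ must tend to $0$. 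The denominator is $\sim$ const as computed above; for the numerator, $\Eb|c_{j,k}\eta_{j,k}|^3=|c_{j,k}|^3\,\Eb|\eta_{j,k}|^3=o(\e^{3\ga})\cdot o(\e^{3\ga}\e_y^{-2(n-N)})$ per term, and summing over $\sim\e_y^{-(n-N)}$ values of $j$ gives $o(\e^{6\ga}\e_y^{-2(n-N)}\cdot\e_y^{n-N})$; one then checks this is $o(1)$ using $\e_y\sim\e$ and the precise power of $\e$ carried by $|c_{j,k}|^3\sim\e^{3\ga}\e_y^{3(n-N)}$, so the $\e_y$ powers cancel and the $o(\cdot)$ in \eqref{noise var} does the rest. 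This gives asymptotic normality with the stated mean zero and the covariance obtained from $\lim_\e\sum_{j,k}c_{j,k}^2\,\Eb\eta_{j,k}^2$, which is a Riemann sum converging to an integral over $\CY$ of $K^2\sigma^2$.

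The main obstacle, I expect, is the rigorous leading-order asymptotics of $c_{j,k}$ uniformly in $(j,k)$: the oscillatory $\xi$-integral in \eqref{flt step} does not converge absolutely when $\ga>-N$, so the stationary-phase reduction must be done carefully, and — crucially — the quality of the approximation degrades precisely where the Hessian $\pa_y^2(\xi\cdot\Psi(x_0,y))$ degenerates. This is why Assumption~\ref{ass:x0}(1) restricts the bad set $Y_1(x_0,\xi)$ to box-counting dimension $<n-N$: one needs that the Riemann sum over $y_j$ of the (possibly large) contributions near $Y_1$ is negligible, which requires a covering argument using Definition~\ref{def:box-dim} to bound the number of grid points $y_j$ within distance $O(\e)$ of $Y_1$. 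Handling the non-stationary region (where repeated integration by parts in $y$ gives rapid decay, using the Bolker-type condition Assumption~\ref{ass:x0}(3) to rule out spurious stationary points in the backprojection variable) versus the stationary region, and patching the two with the smoothness of $z(w,u)$ from \eqref{small 2nd}, is where the technical weight of the argument lies; the CLT step itself is then comparatively soft.
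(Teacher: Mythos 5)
Your skeleton matches the paper's strategy: write $\nrec(x_0+\e\chx)$ as a linear combination of the independent $\eta_{j,k}$, reduce the coefficients to an explicit kernel coming from the principal symbol (the paper's $\e^\ga F_\e\approx G(u,\check v)$), show the lower-order part $a-a_0\in S^{\ga'}$ contributes only a vanishing remainder, and invoke the Lyapunov CLT for triangular arrays. The genuine gap is at the step you wave through as ``a Riemann sum converging to an integral over $\CY$.'' After the reduction, the coefficient of $\eta_{j,k}$ is (up to remainders) $\e^{-\ga}\e_y^{n-N}G(y_j,k-b_j)$ with $b_j=\pa_x\Psi(x_0,y_j)\cdot\chx+\Psi(x_0,y_j)/\e$ (cf.\ \eqref{recon}), so the variance is $\e_y^{n-N}\sum_j\psi(y_j,b_j)\sigma^2(y_j,\Psi(x_0,y_j))$ with $\psi(y,r)=\sum_k G^2(y,k-r)$ periodic in $r$; the argument $b_j$ contains $\Psi(x_0,y_j)/\e$, which oscillates on the scale of the grid, so this is \emph{not} a Riemann sum of a fixed continuous function. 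Proving that it converges, and identifying the limit (which is what pins down the Gaussian variance and keeps the Lyapunov denominator bounded away from zero), requires an equidistribution argument: Weyl's criterion plus a van der Corput--type bound on the sums $\e_y^{n-N}\sum_j \exp(2\pi i\, m\cdot\Psi(x_0,y_j)/\e_y)$, $m\in\BZ^N\setminus0$. It is exactly here — not in any stationary-phase asymptotics of the coefficients — that Assumption~\ref{ass:x0}(1) is used: the box-counting hypothesis on $Y_1(x_0,\xi)$ allows one to excise a small neighborhood of the Hessian-degeneracy set and bound the exponential sums on the rest (the paper's Lemma~\ref{lem:generic cond} and the proof of \eqref{lim De}); positivity of the limit then comes from Assumption~\ref{ass:x0}(2). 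Your proposal contains no substitute for this step, so the existence and nondegeneracy of the limiting variance — the heart of the theorem — is unproved.

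Two further corrections. First, there is no oscillatory integral in $y$ anywhere in this scheme: backprojection \eqref{bp step} is a plain sum over $y_j$, so there is no integration by parts in $y$, no ``non-stationary region,'' and no role for Assumption~\ref{ass:x0}(3) in the one-point theorem (in the paper it enters only through the nondegeneracy of the limiting covariance form in the multi-point Theorem~\ref{thm:Lyapunov_nd}); your ``main obstacle'' paragraph locates the technical weight in the wrong place. Second, the power counting has a slip: since $\e^\ga F_\e\approx G$, the coefficients scale like $c_{j,k}\sim\e^{-\ga}\e_y^{n-N}$ (not $\e^{\ga}\e_y^{n-N}$), which makes the total variance $O(1)$ rather than $\e^{4\ga}$ ``up to absorbed constants''; with the correct powers your estimate of the Lyapunov numerator goes through essentially as in the paper's \eqref{sum G R}. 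Your remark about centering is fair: Assumption~\ref{ass:noise} as stated gives only second- and third-moment conditions, and the zero-mean property of $\eta_{j,k}$ is used implicitly.
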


Let us choose $L\ge1$ points $\chx_1,\dots,\chx_L\in\br^n$. The corresponding reconstructed vector is $\vec N_\e^{\text{rec}}:=(\nrec(x_0+\e\chx_1),\dots,\nrec(x_0+\e\chx_L))\in\br^L$. For simplicity, the dependence of $N^{\text{rec}}$ and $\vec N_\e^{\text{rec}}$ on $x_0$ and $\chx$ is suppressed from notation. 

\begin{theorem} \label{thm:Lyapunov_nd} Let $x_0\in\us$ and $\chx_l\in\br^n$, $l=1,2,\dots,L$, be fixed. Suppose the assumptions of Theorem~\ref{thm:Lyapunov1d} are satisfied. One has
\be\label{main lim vec}
\vec N^{\text{rec}}:=\lim_{\e\to0}\vec N_\e^{\text{rec}}(x_0+\e\chx)
\ee
is a Gaussian random vector, and the limit is in the sense of distributions.
\end{theorem}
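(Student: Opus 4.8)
The plan is to derive Theorem~\ref{thm:Lyapunov_nd} from Theorem~\ref{thm:Lyapunov1d} by the Cram\'er--Wold device. A random vector $\vec N^{\text{rec}}\in\br^L$ is Gaussian if and only if every linear combination $\sum_{l=1}^L t_l N^{\text{rec}}(\chx_l)$, $t=(t_1,\dots,t_L)\in\br^L$, is a (one-dimensional) Gaussian random variable. So I would fix $t\in\br^L$ and study the scalar random variable
\be\label{cw_lincomb}
S_\e(t):=\sum_{l=1}^L t_l\,\nrec(x_0+\e\chx_l).
\ee
The key observation is that $\nrec$ is linear in the data: from \eqref{interp}, \eqref{flt step}, \eqref{bp step}, $\nrec(x)$ is a fixed (deterministic, $\e$-dependent) linear functional of $\{\eta_{j,k}\}$. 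Hence $S_\e(t)$ is again such a linear functional — explicitly, $S_\e(t)=\sum_{j,k} c_{j,k}(\e,t)\,\eta_{j,k}$ with deterministic coefficients $c_{j,k}(\e,t)=\sum_l t_l\,(\text{coefficient of }\eta_{j,k}\text{ in }\nrec(x_0+\e\chx_l))$. The structure of $S_\e(t)$ is therefore exactly of the same type as the single-point quantity $\nrec(x_0+\e\chx)$ treated in Theorem~\ref{thm:Lyapunov1d}: a sum of independent, mean-zero random variables weighted by deterministic coefficients built from the amplitude $a$, the phase $\Phi$, the interpolation kernel $\ik$, and the sampling data.

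Next I would verify that the machinery proving Theorem~\ref{thm:Lyapunov1d} applies verbatim, or with trivial modifications, to $S_\e(t)$. Concretely: (i) split $a=a_0+(a-a_0)$ as in Assumption~\ref{ass:grt}(2)--(3); the lower-order part $a-a_0\in S^{\ga'}$ with $\ga'<\ga$ contributes a term that vanishes in the limit (this is the content of the ``lower order terms'' analysis, section~\ref{sec:LOTs}), and this holds for each $\chx_l$ separately, hence for the finite linear combination $S_\e(t)$. (ii) For the principal-symbol part, the proof of Theorem~\ref{thm:Lyapunov1d} establishes a Lyapunov-type central limit theorem for $\nrec(x_0+\e\chx)$: one shows $\sum_{j,k}\Eb|c_{j,k}(\e)\eta_{j,k}|^3\to0$ while $\mathrm{Var}\,S_\e$ stays bounded away from $0$ and $\infty$, using Assumption~\ref{ass:noise} on the second and third moments, Assumption~\ref{ass:x0}(1) to control the stationary-phase contribution away from the degenerate Hessian set $Y_1(x_0,\xi)$, Assumption~\ref{ass:x0}(2) for nonvanishing of the limiting variance, and Assumption~\ref{ass:x0}(3) (the Bolker-type condition) to handle $\pa_x\Psi$. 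Each of these inputs is pointwise in $\chx$ and the sets $Y_1,Y_2$ do not depend on $\chx$ in a way that would break the finite union over $\chx_1,\dots,\chx_L$. The triangle inequality for the $L^3$ norm of the summands, combined with $|c_{j,k}(\e,t)|\le\sum_l|t_l|\,|c_{j,k}^{(l)}(\e)|$, gives the Lyapunov bound for $S_\e(t)$ from the individual ones; nondegeneracy of $\mathrm{Var}\,S_\e(t)$ for $t\ne0$ (when it holds) follows from the convergence of the covariance structure, and if it happens to vanish for some $t$ the limit is the degenerate Gaussian $\delta_0$, which is still admissible.

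Finally I would assemble: for every $t\in\br^L$, $S_\e(t)$ converges in distribution as $\e\to0$ to a Gaussian random variable $S(t)$ — its mean is $0$ and its variance is $\lim_\e\mathrm{Var}\,S_\e(t)=\sum_{l,l'}t_l t_{l'}K(\chx_l,\chx_{l'})$ for the covariance kernel $K$ computed in the course of proving Theorem~\ref{thm:Lyapunov1d} (extended to the cross-covariances, which requires only polarizing the quadratic-form computation: $K(\chx_l,\chx_{l'})=\tfrac12[\mathrm{Var}(N^{\text{rec}}(\chx_l)+N^{\text{rec}}(\chx_{l'}))-\mathrm{Var}\,N^{\text{rec}}(\chx_l)-\mathrm{Var}\,N^{\text{rec}}(\chx_{l'})]$, each term being a limit of variances already handled). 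Tightness of the sequence $\vec N^{\text{rec}}_\e$ in $\br^L$ is automatic since each coordinate converges in distribution, so joint convergence of all linear functionals together with the Cram\'er--Wold theorem \emph{and} the continuous mapping / L\'evy continuity argument upgrades the collection of one-dimensional limits to convergence in distribution of the vector $\vec N^{\text{rec}}_\e$ to a Gaussian vector with mean $0$ and covariance $(K(\chx_l,\chx_{l'}))_{l,l'=1}^L$.

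The main obstacle I anticipate is \emph{not} the Cram\'er--Wold reduction itself, which is soft, but making sure the Lyapunov CLT argument of Theorem~\ref{thm:Lyapunov1d} is genuinely \emph{uniform enough} to survive the linear combination: one must check that the interference/cross terms between different $\chx_l$ in $\mathrm{Var}\,S_\e(t)$ converge — i.e. that the oscillatory double sum $\sum_{j,k} c_{j,k}^{(l)}(\e)c_{j,k}^{(l')}(\e)\,\Eb\eta_{j,k}^2$ has a limit given by the claimed kernel $K(\chx_l,\chx_{l'})$. This is the only place where a new computation beyond Theorem~\ref{thm:Lyapunov1d} is needed, and it is handled by repeating the stationary-phase / Riemann-sum analysis of section~\ref{sec:prf beg} with the two phases $\xi\cdot\Phi$ evaluated at $x_0+\e\chx_l$ and $x_0+\e\chx_{l'}$; the $O(\e)$ separation of the two points enters only through the $\e$-scaled phase and produces, in the limit, the dependence of $K$ on $\chx_l-\chx_{l'}$ (and, via $\pa_x\Psi$, on $\chx_l,\chx_{l'}$ individually). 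Assumption~\ref{ass:x0}(3) is exactly what guarantees this limit is finite and well-defined.
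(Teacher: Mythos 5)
Your proposal is correct and follows the same overall route as the paper: the paper also reduces to linear combinations $\zeta_\e=\vec\theta\cdot\vec N_\e^{\text{rec}}$ (Cram\'er--Wold, via \cite[Theorem 10.4.5]{ath_book}) and verifies a Lyapunov condition for each combination, with the only genuinely new computation being the limit of the variance of the combination, obtained by the same Riemann-sum/equidistribution analysis as in the one-point case and yielding the quadratic form $\sum_{l_1,l_2}\theta_{l_1}\theta_{l_2}C(\chx_{l_1}-\chx_{l_2})$. The one place where you diverge is the treatment of possible degeneracy: the paper does \emph{not} allow the limiting variance to vanish for $\vec\theta\neq0$; it proves strict positivity by assuming the limit is zero, using Assumption~\ref{ass:x0}(2) and analyticity of $\tilde\ik$ to force $\sum_l\theta_l e^{-i\hat\mu\cdot\pa_x\Psi(y)\chx_l}\equiv0$, then invoking Assumption~\ref{ass:x0}(3) to find $y_0,\hat\mu$ with pairwise distinct $s_l=\hat\mu\cdot\pa_x\Psi(y_0)\chx_l$ and concluding $\vec\theta=0$ from a Vandermonde determinant. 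You instead bifurcate: if the limiting variance is positive, apply Lyapunov; if it vanishes, Chebyshev gives convergence to the degenerate Gaussian $\delta_0$. For the theorem as stated this is legitimate and slightly more economical (a Gaussian vector may have singular covariance), though it delivers less information than the paper's argument, which the paper later exploits implicitly when characterizing the covariance of the limiting field. Two small inaccuracies to flag: Assumption~\ref{ass:x0}(3) is not what makes the cross-covariance limit ``finite and well-defined'' (that is the job of Assumption~\ref{ass:x0}(1) through the equidistribution lemma); its actual role is precisely the nondegeneracy argument you chose to bypass. Also, your polarization identity for $K(\chx_l,\chx_{l'})$ is mildly circular, since $\mathrm{Var}(N^{\text{rec}}(\chx_l)+N^{\text{rec}}(\chx_{l'}))$ is itself a two-point linear combination; the direct computation of the cross sum, as you also indicate, is the clean way and is what the paper does via $f(r,y)=\sum_l\theta_l G(y,r-\pa_x\Psi(y)\cdot\chx_l)$.
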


Clearly, Theorem~\ref{thm:Lyapunov_nd} contains Theorem~\ref{thm:Lyapunov1d}. We decided to separately state Theorem~\ref{thm:Lyapunov1d} because its proof is easier. The proof of Theorem~\ref{thm:Lyapunov_nd} follows similar logic to Theorem~\ref{thm:Lyapunov1d}, so we discuss only the main new points of the former.

Next we remind the reader the definition of a random field (also known as a random function or topological space-valued random variable).

\begin{definition}\cite[p. 182]{Khoshnevisan2002}
Let $T$ be a topological space, endowed with its Borel field $\CB(T)$. A $T$-valued random variable $X$ on the probability space $(\Omega,\CG,\BP)$ is a measurable map $X:\Omega\to T$. In other words, for all $E\in\CB(T)$, $\{\om\in\Omega:X(\om)\in E\}\in\CG$.
\end{definition}

Let $D\subset R^n$ be a bounded domain. 
Define $C:=C(\bar D,\br)$ to be the collection of all functions continuous up to the boundary $f:\bar D\to\br$ metrized by
\be\label{Cmetric}
d(f,g)=\max_{\chx\in D}|f(\chx)-g(\chx)|,\ f,g\in C.
\ee

Recall that $G(x)$, $x\in \bar D$, is a GRF if $(G(x_1),\cdots,G(x_L))$ is a Gaussian random vector for any $L\ge1$ and any collection of points $x_1,\cdots,x_L\in \bar D$ \cite[Section 1.7]{AdlerGeomRF2010}. As is known, a GRF is completely characterized by its mean function $m(x)=\Eb G(x)$, $x\in D$ and its covariance function $\text{Cov}(x,y)=\Eb (G(x)-m(x))(G(y)-m(y))$, $x,y\in D$ \cite[Section 1.7]{AdlerGeomRF2010}. Thus, Theorem~\ref{thm:Lyapunov_nd} implies that $N^{\text{rec}}(\chx)$, $\chx\in \bar D$, is a GRF. 
For simplicity, we drop the dependence of $N^{\text{rec}}$ on $x_0$ from notation.

In the next theorem, we show that $N_\e^{\text{rec}}(x_0+\e\check x)\to N^{\text{rec}}(\chx)$, $\chx \in \bar D$, as $\e\to0$ weakly, i.e. in distribution as $C$-valued random variables (\cite[p. 185]{Khoshnevisan2002}). Recall that $M$ controls the smoothness of $\ik$ (see Assumption~\ref{ass:ker}).

\begin{theorem}\label{GRF_thm}
Let $D$ be a bounded domain with a Lipschitz boundary. Suppose the assumptions of Theorem~\ref{thm:Lyapunov1d} hold and $M>n/2$. Then $N_{\e}^{\text{rec}}(x_0+\e \chx)\to N^{\text{rec}}(\chx)$, $\chx\in \bar D$, $\e\to0$, as GRFs in the sense of weak convergence. Furthermore, $N^{\text{rec}}(\chx)$, $\chx\in \bar D$, is a GRF with zero mean and covariance
\be\label{Cov main}\begin{split}
\text{Cov}(\chx,\chy)
=&C(\chx-\chy),\\
C(\vartheta):=&\int_{\CY}(G\star G)\big(y,\pa_x\Psi(y)\cdot \vartheta\big)\sigma^2(y,\Psi(y))\dd y,\\
(G\star G)(y,\vartheta):=&\int_{\br^N}G(y,\vartheta+r)G(y,r)\dd r,
\end{split}
\ee
and sample paths of $N^{\text{rec}}(\chx)$ are continuous with probability $1$.
\end{theorem}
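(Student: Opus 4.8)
The plan is to split the statement into three parts: (i) finite-dimensional convergence plus identification of the covariance, (ii) tightness of the family $\{N_\e^{\text{rec}}(x_0+\e\chx)\}_{\chx\in\bar D}$ in $C$, and (iii) sample path continuity of the limit. Part (i) is essentially already done: Theorem~\ref{thm:Lyapunov_nd} gives convergence of $\vec N_\e^{\text{rec}}$ to a Gaussian vector for any finite collection $\chx_1,\dots,\chx_L$, hence the limit $N^{\text{rec}}(\chx)$ is a GRF with zero mean (the mean vanishes since $\Eb\eta_{j,k}$ enters only through the $o(\cdot)$-controlled moments and, in the leading order computation, the linear term averages out). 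What remains in (i) is to compute $\lim_{\e\to0}\Eb\,\nrec(x_0+\e\chx)\nrec(x_0+\e\chy)$ explicitly. I would do this by expanding both factors using the leading-order representation of $\CA$ developed in the earlier sections: write $\nrec(x_0+\e\chx)$ as a backprojection (Riemann sum over $y_j$) of a filtered quantity, substitute $g_{\text{cont}}$ in terms of the $\eta_{j,k}$, and use independence of the $\eta_{j,k}$ so that only the diagonal terms $k=k'$ survive, each weighted by $\Eb\eta_{j,k}^2=\e^{2\ga}\e_y^{-(n-N)}\sigma^2(y_j,z_k)$. After rescaling (the $\e^{2\ga}$ and $\e_y^{-(n-N)}$ factors are exactly what make the sums converge to integrals), the oscillatory factors $e^{i\xi\cdot\Phi}$ combine, a stationary phase / change of variables in $\xi$ localizes $z$ to $z=\Psi(x_0,y)$, and the dependence on $\chx,\chy$ enters only through $\pa_x\Psi(x_0,y)\cdot(\chx-\chy)$ via the linearization \eqref{zwu}. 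This produces the autocorrelation structure $(G\star G)(y,\pa_x\Psi(y)\cdot(\chx-\chy))$, where $G$ is the (already-identified in earlier sections) kernel built from $a_0$, $\Phi$, $\ik$, and the backprojection weight; assembling the $y$-integral against $\sigma^2(y,\Psi(y))$ gives \eqref{Cov main}. The stationarity $\text{Cov}(\chx,\chy)=C(\chx-\chy)$ is automatic from this derivation.

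For part (ii), tightness in $C(\bar D)$, I would use the standard Kolmogorov-type moment criterion for $C$-valued random variables: it suffices to establish a uniform bound
\[
\Eb\,\big|N_\e^{\text{rec}}(x_0+\e\chx)-N_\e^{\text{rec}}(x_0+\e\chy)\big|^{2p}\le C\,|\chx-\chy|^{n+\kappa}
\]
for some $p\ge1$ and $\kappa>0$, uniformly in $\e$, together with a uniform bound at a single point; then Prokhorov gives relative compactness, and combined with the finite-dimensional convergence from part (i) this yields weak convergence in $C$. The increments $N_\e^{\text{rec}}(x_0+\e\chx)-N_\e^{\text{rec}}(x_0+\e\chy)$ are, for each $\e$, linear combinations of the independent $\eta_{j,k}$, so their moments are controlled by the variances (via Marcinkiewicz–Zygmund / Rosenthal, using also the third-moment hypothesis \eqref{noise var} to handle non-Gaussianity uniformly), and the key is to show the $L^2$ norm of the increment is $\lesssim|\chx-\chy|$. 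This is where the smoothness assumption $M>n/2$ on $\ik$ is used: differentiating the integrand \eqref{flt step}–\eqref{bp step} in $\chx$ costs one factor of the frequency $\xi$, and the $M$-fold differentiability of $\ik$ provides enough decay in $\xi$ (Assumption~\ref{ass:ker}, $M>\max(N+\ga+1,n/2)$) for the resulting oscillatory integrals to converge and be bounded uniformly in $\e$. Holder-continuity of $\sigma$ and smoothness of $\Psi$, $\Phi$, $a_0$ handle the remaining terms.

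For part (iii), sample-path continuity of $N^{\text{rec}}$, I would either (a) note that a weak limit in $C(\bar D)$ of $C(\bar D)$-valued random variables is supported on $C(\bar D)$, so continuity of sample paths is immediate once (ii) is established, or (b) give the direct argument via the Kolmogorov–Chentsov theorem applied to the Gaussian field $N^{\text{rec}}$: from the explicit covariance \eqref{Cov main}, $\Eb|N^{\text{rec}}(\chx)-N^{\text{rec}}(\chy)|^2=2\big(C(0)-C(\chx-\chy)\big)$, and one checks from the integral formula that $C$ is Lipschitz (differentiating under the integral sign in $\vartheta$, using that $G\star G$ is as smooth as the decay of $G$ allows and $\pa_x\Psi$ is bounded on the compact $\CY$); a Lipschitz covariance gives a modulus of continuity good enough for Kolmogorov–Chentsov, hence a continuous modification, which for a weak limit in $C$ is the version we have. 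I would present route (a) as the main argument and mention (b) as the self-contained alternative that also re-confirms the covariance is well-defined.

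I expect the main obstacle to be the uniform-in-$\e$ increment bound in part (ii): one must control oscillatory integrals whose phase is $\e$-dependent (after the substitution $x=x_0+\e\chx$ the phase becomes $\xi\cdot\Phi(x_0+\e\chx,(y,z))$, and naive expansion loses uniformity), and simultaneously handle the passage from a Riemann sum over the lattice $\{(y_j,z_k)\}$ to an integral while keeping the Holder exponent in $|\chx-\chy|$ intact. The resolution is to carry out the rescaling $z\mapsto (z-\Psi)/\e$ first, as in the earlier sections' treatment of the leading-order term, so that the $\e$-dependence is pushed into lower-order remainders that are uniformly small by Assumptions~\ref{ass:grt}(3) and \ref{ass:Phi}; the lattice-to-integral step then only affects constants, not the exponent. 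Assumption~\ref{ass:x0}(1) (the degenerate-Hessian set $Y_1(x_0,\xi)$ has box-counting dimension $<n-N$) is what guarantees the stationary-phase estimates in $y$ are integrable near the bad set, and Assumption~\ref{ass:x0}(3) ensures the map $y\mapsto \pa_x\Psi(x_0,y)\chx$ is non-degenerate a.e., which is needed both for the increment estimate and for $C(\vartheta)$ to be genuinely a function of $\vartheta$ rather than degenerate.
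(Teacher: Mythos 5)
Your overall architecture (finite-dimensional convergence via Theorem~\ref{thm:Lyapunov_nd}, identification of the covariance by the diagonal second-moment computation, tightness, then continuity of paths because the weak limit lives in $C(\bar D)$) matches the paper, and your parts (i) and (iii) are essentially the paper's argument: the covariance is exactly the computation already carried out for the Lyapunov denominator (eq.~\eqref{recon dttpr 4} with $L=2$), and continuity is immediate once weak convergence in $C$ is established. The genuine problem is your part (ii). The Kolmogorov-type tightness criterion in parameter dimension $n$ requires a bound $\Eb\,|N_\e^{\text{rec}}(x_0+\e\chx)-N_\e^{\text{rec}}(x_0+\e\chy)|^{a}\le C|\chx-\chy|^{n+\kappa}$ with exponent strictly larger than $n$, uniformly in $\e$. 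For fixed $\e$ the increment is a sum of independent, non-Gaussian variables, and Rosenthal/Marcinkiewicz--Zygmund bounds for its $a$-th moment require $a$-th moments of the individual $\eta_{j,k}$. Assumption~\ref{ass:noise} supplies only second and third moments, so the best exponent you can reach on $|\chx-\chy|$ is $3$; your route therefore cannot close for $n\ge 3$ (e.g.\ the cone-beam example in $\br^3$), and invoking the third-moment hypothesis ``to handle non-Gaussianity uniformly'' does not substitute for the missing higher moments. Relatedly, you misplace the role of $M>n/2$: it is not about $\xi$-decay in an increment estimate.

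The paper avoids this entirely by proving tightness with second moments only: using $\ik\in C_0^M$ one differentiates the kernel $M$ times in $\chx$ to get the uniform bound \eqref{Fall chxder bnd}, hence $\Eb\Vert N_\e^{\text{rec}}(x_0+\e\chx)\Vert_{W^{M,2}(D)}^2\le c$ uniformly in $\e$ (eq.~\eqref{var der}); since $D$ has Lipschitz boundary and $M>n/2$, the embedding $W^{M,2}(D)\hookrightarrow C(\bar D)$ is compact, so the balls $\Gamma_\de=\{f:\Vert f\Vert_{W^{M,2}(D)}^2\le 1/\de\}$ are compact in $C$, and Chebyshev's inequality gives tightness. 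If you want to salvage your route, you would either have to add higher-moment assumptions on $\eta_{j,k}$ or switch to this Sobolev-embedding/Chebyshev argument, which is exactly where the hypotheses $M>n/2$ and the Lipschitz boundary of $D$ are consumed.
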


\section{Examples}\label{sec:examples}

\subsection{Classical Radon transform in $\br^2$ and $\br^3$}\label{ssec:CRT}
We begin with the classical RT in $\br^2$. The following conventional data parametrization is used:
\be
y=\al,\ z=p,\ \Phi(x,\al,p)=p-\vec\al\cdot x,\ \Psi(x,\al)=\vec\al\cdot x,
\ee
where $\vec\al=(\cos\al,\sin\al)$. Then $\pa_p \Phi(x,\al,p)=1$, so Assumption~\ref{ass:Phi}(3) is satisfied. From \eqref{Ydef}, 
$Y_1(x_0,\xi)=\{\al\in[0,2\pi):\vec\al\cdot x_0=0\}$, which is a set consisting of two points, i.e. it has the upper box-counting dimension of zero, as long as $x_0\not=0$. Also, $Y_2(x_0,\chx)=\{\al\in[0,2\pi):\vec\al\cdot \check x=0\}$ (cf. \eqref{Bolker}), which is a set of measure zero. Thus, Assumptions~\ref{ass:x0}(1,3) are satisfied.

Next consider the classical RT in $\br^3$. In this case we use the conventional parametrization
\be\bs
&y=(\al,\theta),\ p=z,\ \Phi(x,\al,\theta,p)=p-\vec\Theta\cdot x,\ \Psi(x,\al,\theta)=\vec\Theta\cdot x,\\
&\vec\Theta:=(\cos\al\cos\theta,\cos\al\sin\theta,\sin\al),\ \theta\in[0,2\pi),|\al|<\pi/2.
\end{split}
\ee
Then $\pa_p \Phi(x,\al,\theta,p)=1$, so Assumption~\ref{ass:Phi}(3) is satisfied. By \eqref{Ydef}, we compute the Hessian:
\be\bs
&\pa_{(\al,\theta)}^2\Psi=-\begin{pmatrix} (\vec\theta\cdot\hat x)\cos\al +x_3\sin\al  & (\vec\theta^\perp\cdot\hat x)\sin\al\\ (\vec\theta^\perp\cdot\hat x)\sin\al & (\vec\theta\cdot\hat x)\cos\al
\end{pmatrix},\\
&\vec\theta=(\cos\theta,\sin\theta),\ \vec\theta^\perp=(-\sin\theta,\cos\theta),\ 
\hat x=(x_1,x_2),\ x=(\hat x,x_3).
\end{split}
\ee
Here we have assumed without loss of generality that $\xi=1$. Clearly, $\text{det}(\pa_{(\al,\theta)}^2\Psi)\equiv 0$ if $\hat x=0$. Thus, all points $x=(0,0,x_3)$, $x_3\in\br$, are exceptional, because they violate Assumption~\ref{ass:x0}(1). Suppose now $\hat x\not=0$. After simple transformations we obtain
\be
2\text{det}(\pa_{(\al,\theta)}^2\Psi)=|\hat x|^2\cos(2\al)+x_3(\vec\theta\cdot\hat x)\sin(2\al)+[2(\vec\theta\cdot\hat x)^2-|\hat x|^2].
\ee
For each $\theta\in[0,2\pi)$, we can find at most finitely many solutions $|\al|<\pi/2$ to $\text{det}(\pa_{(\al,\theta)}^2\Psi)=0$ and, generically, they depend smoothly on $\theta$. Hence, the sets $Y_1(x,\xi)$, $\xi\not=0$, have the upper box-counting dimension of one, as long as $\hat x\not=0$. For the cone beam transform, $n=3$ and $N=1$, so Assumption~\ref{ass:x0}(1) is satisfied.

Finally, $Y_2(x,\chx)=\{\vec\Theta\in\mathcal S^2:\vec\Theta\cdot \check x=0\}$ (cf. \eqref{Bolker}), which is  a set of measure zero in the unit sphere. Thus, Assumption~\ref{ass:x0}(3) is satisfied as well.

\subsection{Cone beam transform in $\br^3$}\label{ssec:CBT}
Our next example is the cone beam transform in $\br^3$ with a circular source trajectory. The detector coordinates are $(u,v)\in\br^2$ (the analog of $z$), the source coordinate is $s\in[0,2\pi)$ (the analog of $y$), see Figure~\ref{fig:cbct}, and $n=3$, $N=2$. The source trajectory is given by
\be\label{source}
P(s)=(R\cos s,R\sin s,0),\ 0\le s<2\pi.
\ee
The (virtual) detector is flat, passes through the origin, and rotates together with the source. Points on the detector are parametrized as follows:
\be\label{detector}
Z(s,u,v)=u(-\sin s,\cos s,0)+v(0,0,1).
\ee
For a point $x=(x_1,x_2,x_3)\in\br^3$, its stereographic projection from the source $P(s)$ to a point $(u,v)=(U(x,s),V(x,s))$ in the detector plane is given by
\be\label{det proj}\bs
&U(x,s)=T(x,s)(-x_1\sin s+x_2\cos s),\ V(x,s)=T(x,s)x_3,\\ 
&T(x,s)=[1-(x_1\cos s+x_2\sin s)/R]^{-1}.
\end{split}
\ee
The support of $f$ and reconstruction domain $\us$ are contained inside the cylinder $x_1^2+x_2^2\le c<R$. The two key functions are given by
\be\label{CB fns}
\Phi(x,s,u,v)=(u-U(x,s),v-V(x,s)),\ \Psi(x,s)=(U(x,s),V(x,s)).
\ee
Clearly, $\pa_{(u,v)}\Phi$ is a $2\times2$ identity matrix, so Assumption~\ref{ass:Phi}(3) is satisfied. 

\begin{figure}[h]
{\centerline{
{\epsfig{file={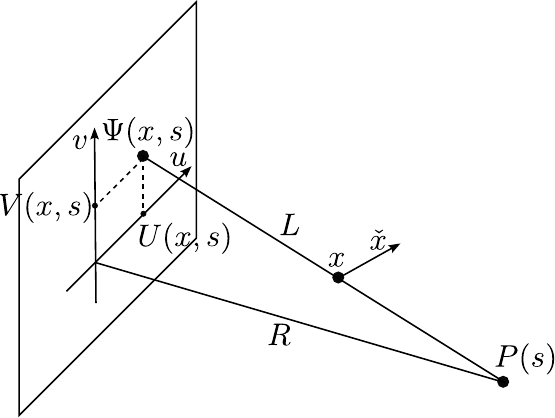}, width=8cm}}
}}
\caption{Illustration of cone beam geometry.}
\label{fig:cbct}
\end{figure}

From \eqref{Ydef}, $Y_1(x,\xi)=\{s\in[0,2\pi):\pa_s^2(\xi\cdot \Psi(x,s)=0\}$, $\xi\in\br^2\setminus0$. Hence, generally, Assumption~\ref{ass:x0}(1) is violated if the projection of $x$ onto the detector contains a straight line segment, see Figure~\ref{fig:badcase1}, left panel. For the circular source trajectory \eqref{source}, this is the case when $x_3=0$, i.e. when $x$ is in the plane of the source trajectory, see Figure~\ref{fig:badcase1}, right panel. As is easy to see, the projection of $x$ to the detector is an ellipse as long as $x_3\not=0$. Indeed, from \eqref{det proj} we find
\be\bs
&(x_2/x_3)\cos s-(x_1/x_3)\sin s=U/V,\\
&(x_1/R)\cos s+(x_2/R)\sin s=1-(x_3/V).
\end{split}
\ee
Eliminating $\sin s$ and $\cos s$ gives 
\be
(x_1^2+x_2^2)V^2=x_3^2U^2+R^2(V-x_3)^2.
\ee
This is an ellipse, because the reconstruction domain is inside the source trajectory, i.e. $x_1^2+x_2^2<R^2$. 
Therefore Assumption~\ref{ass:x0}(1) is satisfied if $x_3\not=0$.

\begin{figure}[h]
{\centerline{
{\epsfig{file={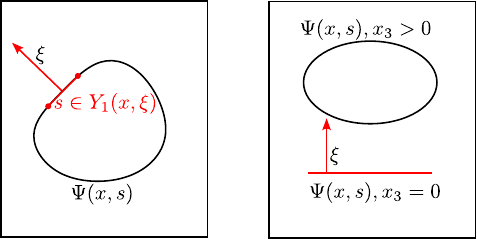}, width=11cm}}
}}
\caption{Illustration of the case when the set $Y_1(x,\chx)$ is an interval for a general source trajectory (left panel). The right panel illustrates the case of a circular source trajectory.}
\label{fig:badcase1}
\end{figure}

The meaning of Assumption~\ref{ass:x0}(3) is best understood using Figures~\ref{fig:cbct} and \ref{fig:badcase2}. We have $Y_2(x,\chx)=\{s\in[0,2\pi):\pa_t\Psi(x+t\chx,s)|_{t=0}=0\}$, where $\Psi(x,s)$ is the projection of $x$ to the detector. It is clear from the figures that the directional derivative is zero only when $\chx$ is along the line $L(x,s)$ through $x$ and the source $P(s)$. This implies that Assumption~\ref{ass:x0}(3) is violated only if there exists a line $L$ through $x$ such that the intersection of $L$ with the source trajectory contains a line segment as illustrated in Figure~\ref{fig:badcase2}. Clearly, for a circular trajectory this does not happen.

\begin{figure}[h]
{\centerline{
{\epsfig{file={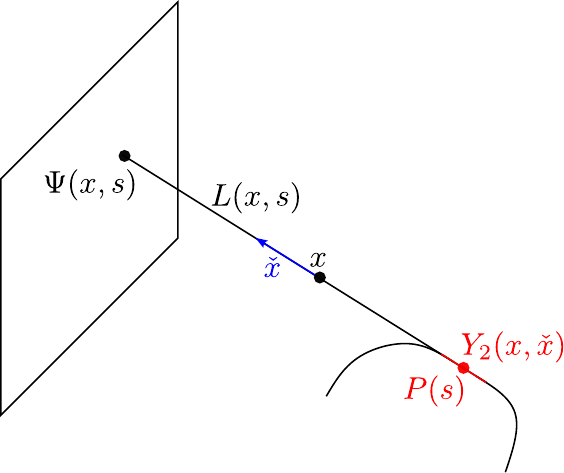}, width=8cm}}
}}
\caption{Illustration of the case when the set $Y_2(x,\chx)$ has positive measure. The values $s\in Y_2(x,\chx)$ parametrize a line segment (shown in red), which is a subset of the source trajectory. The vector $\check x$ (shown in blue) is parallel to the line segment. The lines $L(x,s)$, $s\in Y_2(x,\chx)$, are all the same and contain the segment.}
\label{fig:badcase2}
\end{figure}

\section{Proof of Theorem~\ref{thm:Lyapunov1d}: Contribution of the principal symbol}\label{sec:prf beg}

Throughout the proof we make the additional assumption
\be\label{extra cond}
a(x,(y,z),\xi)=a_0(x,(y,z),\xi)\equiv0,\ (x,(y,z))\not\in\Lambda.
\ee
This assumption does not affect the validity of Theorem~\ref{thm:Lyapunov1d}. One can always modify $\CA$ by a smoothing operator so that this condition holds \cite[Theorem 2.2, Ch. VI]{trev2}. See also the last paragraph in section~\ref{sec:LOTs}.

In view of \eqref{interp}, \eqref{flt step}, consider the integral
\be\label{F def}
F_\e(u,z^\prime):=(2\pi)^{-N}\int_{\br^N}\int_{\CZ} a_0(u,z,\xi)\ik\bigg(\frac{z-z^\prime}\e\bigg)e^{i\xi\cdot\Phi(u,z)}\dd z \dd\xi.
\ee

\subsection{An upper bound for $|F_\e(u,z^\prime)|$}
Change variable $z\to w=\Phi(u,z)$ and then $w\to \check w=w/\e$, $\xi\to\hat\xi=\e\xi$:
\be\label{F v2}\begin{split}
F_\e(u,z^\prime)=&(2\pi)^{-N}\int_{\br^N}\int_{|\check w|\le\de/\e} \frac{a_0(u,z,\xi)}{|\text{det}(\pa_z \Phi(u,z))|}\ik\bigg(\frac{z-z^\prime}\e\bigg)\e^Ne^{i\e\xi\cdot \check w}\dd \check w \dd\xi\\
=&\e^{-\ga}(2\pi)^{-N}\int_{\br^N}\int_{W_\e(u,z^\prime)} a_1(u,z,\hat\xi)\ik\bigg(\frac{z-z^\prime }\e\bigg)e^{i\hat\xi\cdot \check w}\dd \check w \dd\hat\xi,\\
a_1(u,z,\xi):=&a_0(u,z,\xi)|\text{det}(\pa_z \Phi(u,z))|^{-1},\ z=z(\e\check w,\e).
\end{split}
\ee
By \eqref{small 2nd}, $a_1$ is well-defined. Clearly, the matrices $\pa_z \Phi(u,z)$, where $z=z(w,u)$, and $\pa_w z(w,u)$ are the inverses of each other. Here and throughout this section we set 
\be\label{chech v def}
\check v:=(z^\prime-\Psi(u))/\e,\ W_\e(u,z^\prime):=\{\check w\in\br^N: (z(\e \check w,u)-z^\prime)/\e\in\text{supp}(\ik)\}.
\ee
Since $\ik$ is compactly supported, \eqref{small 2nd} and Assumption~\ref{ass:Phi}(2) imply:
\be\label{diam W}\bs
&\text{diam}(W_\e(u,z^\prime))\le c, (u,z^\prime)\in\Lambda,0<\e\ll1;\\
&\check w\in W_\e(u,z^\prime),(u,z^\prime)\in\Lambda\text{ implies }\big[Q^{-1}(u)+O(\e|\check w|)\big]\check w\in B(\check v,c).
\end{split}
\ee
Here $B(\check v,c)$ is a ball with center $\check v$ and radius $c$.

\noindent
{\bf Case 1: $|\check v|\le c$.} Since $\ik$ is compactly supported, \eqref{small 2nd} implies that the domain of integration with respect to $\check w$ in \eqref{F v2}, i.e. the set $W_\e(u,z^\prime)$, is a subset of a disk $|\check w|\le c$. Therefore,
\be\label{F v3}\begin{split}
\e^\ga F_\e(u,z^\prime)=&(2\pi)^{-N}\int_{\br^N} H_\e(u,z^\prime,\hat\xi)\dd\hat\xi,\
(u,z^\prime )\in\Lambda,\\
H_\e(u,z^\prime,\hat\xi):=&\int_{|\check w|\le c} a_1(u,z(\e \check w,u),\hat\xi)\ik\bigg(\frac{z(\e \check w,u)-z^\prime }\e\bigg)e^{i\hat\xi\cdot \check w}\dd \check w.
\end{split}
\ee

Let $M_1$ be the smallest integer that satisfies $M_1>N+\ga$. Repeated integration by parts with respect to $\check w$ gives
\be\label{F v4}\begin{split}
&H_\e(u,z^\prime ,\hat\xi)=
\int_{|\check w|\le c} (L^T)^{M_1}\bigg[a_1(u,z,\hat\xi)\ik\bigg(\frac{z-z^\prime }\e\bigg)\bigg]e^{i\hat\xi\cdot \check w}\dd \check w,\\
&L:=(1+|\hat\xi|^2)^{-1}(1-i\hat\xi\cdot\pa_{\check w}),\ z=z(\e \check w,u),\ (u,z^\prime )\in\Lambda.
\end{split}
\ee
In this integration by parts we use Assumption~\ref{ass:grt}(2) and Assumption~\ref{ass:ker}. This implies
\be\label{G1 bnd}\begin{split}
|H_\e(u,z^\prime ,\hat\xi)|\le & c (1+|\hat\xi|)^{\ga-M_1},\ \hat\xi\in\br^N,\
(u,z^\prime )\in\Lambda.
\end{split}
\ee
The integral on the first line in \eqref{F v3} is absolutely convergent because $M_1>N+\ga$, hence 
\be\label{F1 bnd}
\e^\ga |F_\e(u,z^\prime )|\le c,\ \text{if }|\check v|\le c,(u,z^\prime )\in\Lambda.
\ee

\noindent
{\bf Case 2: $|\check v|\ge c$.}
Let $A_1\in\s^\prime(\br^N)$ be the distribution
\be\label{A1 ker}
A_1(u,z,\vartheta):=(2\pi)^{-N}\int a_1(u,z,\hat\xi)e^{i\hat\xi\cdot \vartheta}\dd\hat\xi,\
(u,z)\in\Lambda,\ \vartheta\in\br^N.
\ee
We view $\vartheta$ as the argument of $A_1$, and $u$ and $z$ -- as parameters. As is known, Assumption~\ref{ass:grt}(2) implies $A_1\in C^\infty(\br^N\setminus 0)$ and $A_1$ is homogeneous of degree $-(N+\ga)$ \cite[Theorems 7.1.16 and 7.1.18]{hor}:
\be\label{A1 homog}
A_1(u,z,t\vartheta)\equiv t^{-(N+\ga)}A_1(u,z,\vartheta),\ t>0.
\ee
More generally we have $A_1\in C^\infty(\us\times\vs\times(\br^N\setminus0))$, if $u,z,\vartheta$ are all viewed as arguments.

If $|\check v|$ is sufficiently large and $\de>0$ is sufficiently small (cf. \eqref{Lambda def}, \eqref{zwu} and \eqref{small 2nd}), the set $W_\e(u,z^\prime)$ is bounded away from $\check w=0$. 
More precisely, $W_\e(u,z^\prime)\subset \{\check w\in\br^N:\ |\check w|\ge c|\check v|\}$ for some $c>0$ independent of $0<\e\ll1$ and $(u,z^\prime)\in\Lambda$. Combining with \eqref{diam W} this gives
\be\label{part 2}\begin{split}
\e^\ga F_\e(u,z^\prime )=&\int_{W_\e(u,z^\prime)} A_1(u,z^\prime ,\check w)\ik\bigg(\frac{z(\e \check w,u)-z^\prime }\e\bigg)\dd \check w,\ (u,z^\prime )\in\Lambda,
\end{split}
\ee
and \eqref{A1 homog} implies
\be\label{F2 bnd}\begin{split}
\e^\ga |F_\e(u,z^\prime )|\le c (1+|\check v|)^{-(N+\ga)}\text{  if  }  |\check v|\ge c,\
(u,z^\prime )\in\Lambda.
\end{split}
\ee
Combining \eqref{F1 bnd} and \eqref{F2 bnd} gives
\be\label{Fall bnd}\begin{split}
\e^\ga |F_\e(u,z^\prime )|\le c (1+|\check v|)^{-(N+\ga)},\ 
(u,z^\prime )\in\Lambda.
\end{split}
\ee

\subsection{Approximation of $F_\e$ by simpler functions}
Replace $z=z(\e\check w,u)$ with $z=\Psi(u)$ in the argument of $a_1$ in \eqref{F v2} and define
\be\label{F1 def}\begin{split}
F_\e^{(1)}(u,z^\prime ):=&\e^{-\ga} (2\pi)^{-N}\iint a_1(u,\Psi(u),\hat\xi)\ik\bigg(\frac{z(\e\check w,u)-z^\prime }\e\bigg)e^{i\hat\xi\cdot \check w}\dd \check w \dd\hat\xi.
\end{split}
\ee
Since $\ik$ is compactly supported, 
\be\label{triangle}
|z(\e\check w,u)-\Psi(u)|\le |z(\e\check w,u)-z^\prime|+|z^\prime-\Psi(u)|\le c\e+\e|\check v|,
\ee
cf. \eqref{chech v def} and \eqref{diam W}. The following inequality is proven in section~\ref{ssec:ff1}.
\be\label{F-F1}\begin{split}
&\e^{\ga}|F_\e(u,z^\prime )-F_\e^{(1)}(u,z^\prime )|\le c\e (1+|\check v|)^{-(N+\ga-1)},\ (u,z^\prime)\in\Lambda.
\end{split}
\ee


Next we replace $z(\e\check w,u)$ in the arguments of $\ik$ in \eqref{F1 def} and define
\be\label{barF def}\begin{split}
\bar F_\e(u,z^\prime ):=&\e^{-\ga} (2\pi)^{-N}\iint a_1(u,\Psi(u),\hat\xi)\ik\bigg(\frac{\bar z(\e\check w,u)-z^\prime }\e\bigg)e^{i\hat\xi\cdot \check w}\dd \check w \dd\hat\xi,\\
\bar z(w,u):=&\Psi(u)+Q^{-1}(u)w.
\end{split}
\ee
Since $\ik$ and its derivatives are bounded, \eqref{small 2nd} implies 
\be\label{del phi}\bs
&\bigg|\pa_{\check w}^m\bigg[\ik\bigg(\frac{z(\e\check w,u)-z^\prime }\e\bigg)-\ik\bigg(\frac{\bar z(\e\check w,u)-z^\prime }\e\bigg)\bigg]\bigg|\\
&\le c\big[\min(1,\e|\check w|^2)+\e|\check w|\big],\ |m|\le M_1.
\end{split}
\ee
This is where we use that $\ik$ has extra smoothness $\ik\in C_0^{M_1+1}$ (compared with the requirement $\ik\in C_0^{M_1}$, which suffices for \eqref{F v4} to hold). 
The following claim is proven in section~\ref{ssec:f1fb}.
\be\label{F1-Fbar}\begin{split}
\e^{\ga}|F_\e^{(1)}(u,z^\prime )-\bar F_\e(u,z^\prime )|\le & c\big[\min(1,\e|\check v|^2)+\e|\check v|\big] (1+|\check v|)^{-(N+\ga)},\\ 
(u,z^\prime)\in & \Lambda.
\end{split}
\ee

Combining \eqref{F-F1} and \eqref{F1-Fbar} yields
\be\label{F-Fbar}\begin{split}
&\e^{\ga}|F_\e(u,z^\prime )-\bar F_\e(u,z^\prime )|\le c\frac{\e(1+|\check v|)+\min(1,\e|\check v|^2)}{(1+|\check v|)^{N+\ga}},\ (u,z^\prime)\in \Lambda.
\end{split}
\ee
After simple transformations
\be\label{bFG}\begin{split}
\e^{\ga} \bar F_\e(u,z^\prime )=&G(u,\check v),\
G(u,\vartheta):=(2\pi)^{-N}\int_{\br^N} a_2(u,\hat\mu)\tilde\ik(\hat\mu)e^{i\hat\mu\cdot \vartheta}\dd \hat\mu,\\
a_2(u,\hat\mu):=&a_1(u,\Psi(u),Q^{-T}(u)\hat\mu),
\end{split}
\ee
where $\tilde\ik$ is the Fourier transform of $\ik$. As is easily seen, $a_2$ satisfies \eqref{a0_props} with the variable $z$ and set $\CZ$ omitted. Recall that $\vs=\CY\times\CZ$. The main property of the symbol $a_2$ is that it is independent of $z^\prime$.
Similarly to \eqref{Fall bnd},
\be\label{G bnd}
|G(u,\vartheta)|\le c (1+|\vartheta|)^{-(N+\ga)},\ 
u\in\us\times\CY,\vartheta\in\br^N.
\ee
Arguing similarly to \eqref{F v3} -- \eqref{Fall bnd}, the assumption $\ik\in C_0^{M_1+1}(\br^n)$ implies that $G\in C^1(\us\times\CY\times\br^N)$ and
\be\label{Gders bnds}\bs
|\pa_u G(u,\vartheta)|&\le c (1+|\vartheta|)^{-(N+\ga)},\ |\pa_{\vartheta}G(u,\vartheta)|\le c (1+|\vartheta|)^{-(N+\ga+1)},\\ 
u&\in\us\times\CY,\vartheta\in\br^N.
\end{split}
\ee
The second inequality follows because the amplitudes 
$$
\hat\mu_l a_2(u,\hat\mu),\ 1\le l\le N, 
$$
are homogeneous in $\hat\mu$ of degree $\ga+1$.

\section{Proof of Theorem~\ref{thm:Lyapunov1d}: contribution of the lower order terms of the FIO $\CA$}
\label{sec:LOTs}

Using \eqref{F v2} denote
\be\label{rem symb}\begin{split}
\Delta a_1(u,z,\xi):=&(a(u,z,\xi)-a_0(u,z,\xi))|\pa_z \Phi(u,z)|^{-1},
\end{split}
\ee
and
\be\label{delF v3}\begin{split}
\e^\ga \Delta F_\e(u,z^\prime ):=&(2\pi)^{-N}\int_{\br^N} \Delta H_\e(u,z^\prime ,\hat\xi)\dd\hat\xi,\\
\Delta H_\e(u,z^\prime ,\hat\xi):=&\int_{W_\e(u,z^\prime)} \e^\ga\Delta a_1(u,z(\e \check w,u),\hat\xi/\e)\\&\hspace{1cm}\times\ik\bigg(\frac{z(\e \check w,u)-z^\prime }\e\bigg)e^{i\hat\xi\cdot \check w}\dd \check w.
\end{split}
\ee

{\bf Case 1: $|\check v|\le c$.}
Using the same operator $L$ as in \eqref{F v4}, repeated integration by parts with respect to $\check w$ gives
\be\label{delF v4}\begin{split}
\Delta H_\e(u,z^\prime ,\hat\xi)=&
\int_{|\check w|\le c} (L^T)^{M_1}\bigg[\e^\ga\Delta a_1(u,z,\hat\xi/\e)\ik\bigg(\frac{z-z^\prime }\e\bigg)\bigg]e^{i\hat\xi\cdot \check w}\dd \check w,\\
z=&z(\e \check w,u),
\end{split}
\ee
This, Assumption~\ref{ass:grt}(3), and Assumption~\ref{ass:ker} imply
\be\label{delG1 bnd}\begin{split}
|\Delta H_\e(u,z^\prime ,\hat\xi)|\le & c {\e^\ga}{(1+|\hat\xi/\e|)^{\ga^\prime}} (1+|\hat\xi|)^{-M_1}\\
\le & c {\e^{\ga-\ga^\prime}} (1+|\hat\xi|)^{\ga^\prime-M_1},\ \hat\xi\in\br^N,\, (u,z^\prime )\in\Lambda.
\end{split}
\ee
The integral on the first line in \eqref{delF v3} is absolutely convergent because $M_1>N+\ga^\prime$, hence 
\be\label{delF1 bnd}
\e^\ga |\Delta F_\e(u,z^\prime )|=O(\e^{\ga-\ga^\prime}),\ |\check v|\le c,(u,z^\prime )\in\Lambda. 
\ee

{\bf Case 2: $|\check v|\ge c$.}
When $w=0$ is not in the domain of integration, we have
\be\label{delF v3_2}\begin{split}
\Delta F_\e(u,z^\prime )
=&\int_{|w|\le\de} \Delta A_1(u,z(w,u),w)\ik\bigg(\frac{z(w,u)-z^\prime }\e\bigg)\dd w,\\
\Delta A_1(u,z,\vartheta):=&(2\pi)^{-N}\int_{\br^N} \Delta a_1(u,z,\xi)e^{i\xi\cdot \vartheta} \dd\xi.
\end{split}
\ee
Since $\ga>-N/2$, we can always assume that $N+\ga^\prime>0$. By assumption~\ref{ass:grt}(3) and \cite[Theorem 5.12]{abels12},
\be\label{delA bnd}
|\Delta A_1(u,z,\vartheta)|\le c|\vartheta|^{-(N+\ga^\prime)},\ (u,z)\in\Lambda,0<|\vartheta|\le c.
\ee
The fact that the amplitude $\Delta a_1(u,z,\xi)$ is not smooth at the origin $\xi=0$ is irrelevant, since this part contributes a bounded term. We have used here that $\vartheta$ is confined to a bounded set, and the goal of \eqref{delA bnd} is to provide a bound near $\vartheta=0$. Substituting \eqref{delA bnd} into the first equation in \eqref{delF v3_2} and changing variable $w=\e\check w$ gives using \eqref{diam W} and that $W_\e(u,z^\prime)\subset \{\check w\in\br^N:\ |\check w|\ge c|\check v|\}$:
\be\label{delF est}
\e^{\ga}|\Delta F_\e(u,z^\prime)|\le c \e^{\ga-\ga^\prime}(1+|\check v|)^{-(N+\ga^\prime)},\ |\check v|\ge c,(u,z^\prime)\in\Lambda.
\ee
Combining with \eqref{delF1 bnd} yields
\be\label{delF est final}
\e^{\ga}|\Delta F_\e(u,z^\prime)|\le c \e^{\ga-\ga^\prime}(1+|\check v|)^{-(N+\ga^\prime)},\ (u,z^\prime)\in\Lambda.
\ee
Recall that, by construction, $\Delta F_\e(u,z^\prime)=0$ if $(u,z^\prime)\in(\us\times\vs)\setminus\Lambda$.

Suppose now that $K$ is a smoothing operator. Such an operator may be needed to ensure that $a(u,z,\xi)\equiv0$ if $(u,z)\not\in\Lambda$ (see \eqref{extra cond}). The analog of \eqref{F def} is
\be\label{Fsm def}
F_\e(u,z^\prime ):=\int_{\CZ} K(u,z)\ik((z-z^\prime )/\e)\dd z=O(\e^N),\ \e\to0,
\ee
uniformly in $(u,z^\prime)\in\us\times\vs$.

\section{End of proof of Theorem~\ref{thm:Lyapunov1d} and proof of Theorem~\ref{thm:Lyapunov_nd}}\label{sec:Lyapunov}
In what follows we take $x=x_0+\e\chx$. By \eqref{Gders bnds},
\be\label{G approx}\bs
G\bigg(x,y,\frac{z^\prime-\Psi(x,y)}\e\bigg)=&G\bigg(x_0,y,\frac{z^\prime-\Psi(x_0,y)}\e-\pa_x\Psi(x_0,y)\cdot\chx\bigg)\\
&+\frac{O(\e)}{(1+|\check v|)^{N+\ga}}.
\end{split}
\ee
In this section $x_0$ is fixed and dropped from most notation. 
Since $z^\prime=z_k=\e k$ (cf. \eqref{data}, \eqref{interp}, \eqref{flt step}, and \eqref{F def}), reconstruction from noise can be written as follows
\be\label{recon}\bs
\nrec(x_0+\e\chx)=&\kappa_\e\sum_{(y_j,z_k)\in\vs} \big[G\big(y_j,k-b_j\big)+R_{j,k}\big]\eta_{j,k},\\ 
b_j:=&\pa_x\Psi(x_0,y_j)\cdot\chx+(\Psi(x_0,y_j)/\e),\ \kappa_\e:=\e^{-\ga}\e_y^{n-N},
\end{split}
\ee
where $R_{j,k}$ is the remainder. By \eqref{F-Fbar}, \eqref{delF est final}, \eqref{Fsm def}, and \eqref{G approx} the remainder satisfies
\be\label{R bnd}\begin{split}
|R_{j,k}|\le & c\frac{\e(1+t)+\min(1,\e t^2)}{(1+t)^{N+\ga}}
+c \frac{\e^{\ga-\ga^\prime}}{(1+t)^{N+\ga^\prime}}+O(\e^{N+\ga}),\\ 
t=&|k-b_j|,\ (y_j,z_k)\in\vs.
\end{split}
\ee

The following result is proven in appendix~\ref{sec:generic cond prf}. Its proof requires Assumption~\ref{ass:x0}(1).

\begin{lemma}\label{lem:generic cond}
Suppose the assumptions of Theorem~\ref{thm:Lyapunov1d} are satisfied.
For any $\xi\in\br^N\setminus 0$, $u\in\BZ^{n-N}$, and any $\de>0$ sufficiently small, there exist $p>0$ and a finite collection of hypercubes $\mathcal B_k\subset \br^{n-N}$ such that
\be\label{cor_cond1_1}\begin{split}
&\{y\in \CY:\, |\pa_y(\xi\cdot\Psi(x_0,y))-u|\le p\}\subset \cup_k\mathcal B_k,\ 
\sum_k\text{Vol}(\mathcal B_k)\le\de.
\end{split}
\ee
\end{lemma}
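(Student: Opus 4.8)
The plan is to fix $\xi\in\br^N\setminus 0$ and $u\in\BZ^{n-N}$, set $h(y):=\pa_y(\xi\cdot\Psi(x_0,y))-u$, and analyze the sublevel set $E_p:=\{y\in\CY: |h(y)|\le p\}$ for small $p$. Observe that $E_0=\{y:h(y)=0\}$ is the set where the gradient map $y\mapsto\pa_y(\xi\cdot\Psi(x_0,y))$ hits the lattice point $u$. The Jacobian of this gradient map is exactly the Hessian $\pa_y^2(\xi\cdot\Psi(x_0,y))$, so the set of $y\in E_0$ where the map fails to be a local diffeomorphism is contained in $Y_1(x_0,\xi)$ from \eqref{Ydef}. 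By Assumption~\ref{ass:x0}(1), $\overline{\text{dim}}_B(Y_1(x_0,\xi))<n-N$, hence (using the equivalent cube-counting definition after Definition~\ref{def:box-dim}) $Y_1$ can be covered by $N'_\rho$ cubes of side $\rho$ with $N'_\rho\le\rho^{-\beta}$ for some $\beta<n-N$ and all small $\rho$; the total volume of these cubes is $\le\rho^{n-N-\beta}\to0$. This handles the part of $E_p$ near $Y_1$.

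Away from $Y_1$, the Hessian is invertible, so by the inverse function theorem the gradient map is a local diffeomorphism; thus near any such point the preimage of a small ball $B(u,p)$ has volume $O(p^N)$ (it is a graph-like $(n-2N)$-dimensional piece thickened in $p$ — more precisely, using $|\det\text{Hess}|\ge c>0$ on a compact neighborhood, the coarea/change-of-variables bound gives $\text{Vol}(E_p\cap\{\text{Hessian invertible}\})\le c\,p^N$ if $n-N\le N$, or in general $\text{Vol}\le c\,p$ when we only control one good direction). Cover the compact closure $\bar\CY$ minus a small open neighborhood $\mathcal U$ of $Y_1$ by finitely many such coordinate balls; on this fixed finite cover, choosing $p$ small enough makes the total volume of $E_p$ there less than $\de/2$. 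Combining with the neighborhood $\mathcal U$ of $Y_1$ chosen (independently of $p$) to have volume $<\de/2$, and covering everything by hypercubes of the requisite type, yields \eqref{cor_cond1_1} with the stated $\sum_k\text{Vol}(\mathcal B_k)\le\de$; the value of $p$ is then fixed last, after $\mathcal U$ is chosen.

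The main technical obstacle is making the covering \emph{finite} and \emph{uniform in $p$}: the region where the Hessian is invertible is open but not compact, and the local diffeomorphism neighborhoods and the constants in the volume bound $O(p^N)$ vary from point to point. The fix is the standard one — first excise an open neighborhood $\mathcal U\supset Y_1$ with $\text{Vol}(\mathcal U)<\de/2$ (possible by the box-dimension bound), leaving a compact set $\bar\CY\setminus\mathcal U$ on which $|\det\pa_y^2(\xi\cdot\Psi(x_0,y))|\ge c>0$; extract a finite subcover of this compact set by good coordinate balls with uniform constants; then shrink $p$. A minor secondary point is verifying that the smoothness of $\Psi(x_0,\cdot)$ (from the implicit function theorem consequences stated after Assumption~\ref{ass:Phi}) is enough to run the change of variables and to ensure $h$ and its derivatives are bounded on $\bar\CY$, which it is since $\CY$ is bounded. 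One should also note that the statement is trivially vacuous when $E_0=\emptyset$ (e.g.\ $u$ outside the range of the gradient map), and is only nontrivial for the finitely many $u$ that can arise, though the lemma as phrased holds for every $u$.
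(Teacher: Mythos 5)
Your proposal is correct and follows essentially the same route as the paper: cover $Y_1(x_0,\xi)$ by cubes of small total volume using the box-dimension assumption, then use the nondegenerate Hessian on the compact complement to confine the set where $\pa_y(\xi\cdot\Psi(x_0,y))$ is $p$-close to $u$ to finitely many small cubes, fixing $p$ last. The only differences are cosmetic: the paper covers the finitely many exact solutions of $\pa_y(\xi\cdot\Psi(x_0,y))=u$ and then takes $p$ to be the infimum of $|\pa_y(\xi\cdot\Psi(x_0,y))-u|$ off the cubes, whereas you bound the preimage of the $p$-ball directly (and note that your aside claiming volume $O(p^N)$ for a ``graph-like $(n-2N)$-dimensional piece'' is dimensionally off, since the gradient map acts from $\br^{n-N}$ to $\br^{n-N}$ and the correct local bound is $O(p^{n-N})$; your weaker $O(p)$ fallback still suffices).
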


We have the following lemma, which is proven in appendix~\ref{sec:variance}. The proof uses Lemma~\ref{lem:generic cond}.

\begin{lemma}\label{lem:Lyapunov} Suppose the assumptions of Theorem~\ref{thm:Lyapunov1d} are satisfied. With $b_j$ and $\kappa_\e$ defined in \eqref{recon}, one has
\be\label{main lim L1d}
\lim_{\e\to0}\frac{\kappa_\e^3\sum_{(y_j,z_k)\in\vs}\lvert G(y_j,k-b_j)+R_{j,k}\rvert^3\Eb\lvert \eta_{j,k}\rvert^3}{\big[\kappa_\e^2\sum_{(y_j,z_k)\in\vs}(G(y_j,k-b_j)+R_{j,k})^2\Eb\eta_{j,k}^2\big]^{3/2}}=0.
\ee
\end{lemma}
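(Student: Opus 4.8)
The statement is exactly the Lyapunov condition for the central limit theorem applied to the triangular array of independent summands $\xi_{j,k}:=\kappa_\e(G(y_j,k-b_j)+R_{j,k})\eta_{j,k}$. So the plan is to estimate the numerator from above and the denominator from below, and show the ratio is $o(1)$.

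\emph{Denominator (lower bound on the variance).} First I would use Assumption~\ref{ass:noise}(2), i.e. $\Eb\eta_{j,k}^2=\e^{2\ga}\e_y^{-(n-N)}\sigma^2(y_j,z_k)$, so that $\kappa_\e^2\Eb\eta_{j,k}^2=\e_y^{n-N}\sigma^2(y_j,z_k)$, and the denominator (before the $3/2$ power) becomes a Riemann sum in the $y$-variable of $\sum_k G(y_j,k-b_j)^2\sigma^2(\cdots)$ plus cross and $R^2$ terms. The key step is to show that $\sum_{z_k\in\CZ}G(y_j,k-b_j)^2$ is bounded below by a positive constant for all $y_j$ in a fixed open subset $V\subset\CY$ (using Assumption~\ref{ass:x0}(2), which guarantees $a_0\ne0$ on $V\times\Xi$, hence $G(y,\cdot)\not\equiv0$ there, hence a lower bound on the sum — here one must be a little careful that the shift $b_j$ does not conspire to land only in zeros of $G$, but since $G$ is continuous, decays like $(1+|\vartheta|)^{-(N+\ga)}$ with $N+\ga>0$, and the sampling points $k-b_j$ form a unit-spacing lattice, the sum is comparable to $\int G(y_j,\vartheta)^2\dd\vartheta>0$ up to the usual sampling arguments). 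Together with $\sigma\ge c>0$ on $V$ (Assumption~\ref{ass:x0}(2) again), this gives that the bracketed quantity in the denominator is $\ge c>0$ uniformly for $0<\e\ll1$; the $R^2$ and cross terms only help or are lower order and can be absorbed. Hence the denominator is $\ge c>0$.

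\emph{Numerator (upper bound).} Here I would bound $\kappa_\e^3\Eb|\eta_{j,k}|^3$ using the second half of \eqref{noise var}: $\Eb|\eta_{j,k}|^3=o(\e^{3\ga}\e_y^{-2(n-N)})$ uniformly in $j,k$, so $\kappa_\e^3\Eb|\eta_{j,k}|^3=o(\e_y^{n-N})$ uniformly. Then the numerator is $o(\e_y^{n-N})\sum_{(y_j,z_k)\in\vs}|G(y_j,k-b_j)+R_{j,k}|^3$, and it remains to show $\e_y^{n-N}\sum_{(y_j,z_k)}|G(y_j,k-b_j)+R_{j,k}|^3=O(1)$. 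Split $|G+R|^3\le c(|G|^3+|R|^3)$. For the $G$ part: $\e_y^{n-N}\sum_{y_j}\big(\sum_{z_k}|G(y_j,k-b_j)|^3\big)$; the inner sum over $k$ is bounded since $|G(y_j,\vartheta)|\le c(1+|\vartheta|)^{-(N+\ga)}$ and $3(N+\ga)>N$ (because $\ga>-N/2\Rightarrow 3\ga>-3N/2\Rightarrow 3N+3\ga>3N/2>N$), so the $k$-sum converges uniformly in $j$, and the $y_j$-sum times $\e_y^{n-N}$ is a Riemann sum over the bounded set $\CY$, hence $O(1)$. For the $R$ part I use the explicit bound \eqref{R bnd}: each of the three terms there, raised to the third power and summed over $k$ (with $t=|k-b_j|$), converges because $3(N+\ga)>N$, $3(N+\ga')>N$ (same computation since $\ga'$ can be taken with $N+\ga'>0$), and the contributions carry extra powers of $\e$; summing over $y_j$ with the weight $\e_y^{n-N}$ again gives $O(1)$, in fact $o(1)$. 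So the numerator is $o(1)$.

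\emph{Conclusion.} Dividing, the ratio is $o(1)/(c>0)^{3/2}=o(1)$, which is \eqref{main lim L1d}.

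\emph{Main obstacle.} The delicate point is the \emph{lower} bound on the denominator: one must rule out degenerate cancellation in $\kappa_\e^2\sum_{(y_j,z_k)}(G(y_j,k-b_j)+R_{j,k})^2\Eb\eta_{j,k}^2$ as $\e\to0$. The shifts $b_j=\pa_x\Psi(x_0,y_j)\cdot\chx+\Psi(x_0,y_j)/\e$ vary rapidly and nonuniformly with $j$ as $\e\to0$, so one cannot simply pass to a single limiting integral pointwise in $y$; instead one localizes to $V$ where $\sigma\ne0$ and $G(y,\cdot)\not\equiv0$, and argues that $\sum_{k\in\BZ^N}G(y,k-b)^2$, as a function of the real shift $b$ and of $y\in V$, is bounded below by a positive constant independent of $b$ and $y$ — this is a sampling/quasi-periodicity fact that follows from $G(y,\cdot)\in L^2$, its continuity, and its decay, but it needs to be stated and proved carefully. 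This is presumably where Lemma~\ref{lem:generic cond} (and thus Assumption~\ref{ass:x0}(1)) enters: it is used to show that the Riemann sum over $y_j$ of these lower-bounded inner sums does not lose mass near the exceptional set $Y_1(x_0,\xi)$ where the stationary-phase structure of $G$ degenerates, so that the whole denominator stays bounded away from $0$ uniformly in $\e$.
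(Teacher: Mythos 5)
Your treatment of the numerator (using $\kappa_\e^3\Eb|\eta_{j,k}|^3=o(\e_y^{n-N})$ and the decay of $G$ and of the bound \eqref{R bnd}) and of the remainder/cross terms in the denominator matches the paper's argument. The genuine gap is in the crucial step, the lower bound on the denominator. You claim that $\sum_{k\in\BZ^N}G(y,k-b)^2\ge c>0$ uniformly in the real shift $b$ and in $y\in V$, and that this is a ``sampling/quasi-periodicity fact'' following from $G(y,\cdot)\in L^2$, its continuity, and its decay. That is false in general: a continuous, decaying, not-identically-zero function can vanish on a full translate of the lattice $\BZ^N$ (in one dimension take $G(\vartheta)=\sin(\pi\vartheta)h(\vartheta)$ with $h$ smooth and decaying), so for some shifts the inner sum is zero, and Assumption~\ref{ass:x0}(2) does not exclude this for the particular $G$ of \eqref{bFG}. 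Since the shifts $b_j=\pa_x\Psi(x_0,y_j)\cdot\chx+\Psi(x_0,y_j)/\e$ oscillate rapidly and nonuniformly in $j$ as $\e\to0$, a pointwise-in-$b$ bound is exactly what you cannot get, so your argument that the denominator stays bounded away from zero does not close.

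The paper circumvents precisely this obstruction by never bounding the inner sum pointwise in $b_j$. It introduces the periodization $\psi(y,r)=\sum_{k\in\BZ^N}G^2(y,k-r)$, which is $\BZ^N$-periodic in $r$, writes $D_\e=\e_y^{n-N}\sum_j\psi(y_j,b_j)\sigma^2(y_j,\Psi(y_j))+O(\e^\de)$ (cf. \eqref{Deps}), and then proves via a Weyl-type equidistribution argument for the points $b_j$ modulo $\BZ^N$ (Theorem~\ref{thm:weil}, Lemma~\ref{lem:udpts-e}, Corollaries~\ref{cor:udpts-v2-e} and \ref{cor:udpts-2d-e}) that $D_\e\to\int_\CY\big(\int_{\br^N}G^2(y,r)\,\dd r\big)\sigma^2(y,\Psi(y))\,\dd y$; thus it is the \emph{average} of $\psi(y,\cdot)$ over the unit cell that appears, not an infimum over shifts, and positivity of the limit follows from Parseval and Assumption~\ref{ass:x0}(2). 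This is also where Lemma~\ref{lem:generic cond} (hence Assumption~\ref{ass:x0}(1)) actually enters: not, as you guessed, to prevent the Riemann sum from losing mass near $Y_1$, but to verify the non-resonance hypothesis of the equidistribution corollary, namely that $\pa_y(m\cdot\Psi(x_0,y))$ stays a positive distance from every integer vector outside a set of arbitrarily small volume, so that the exponential sums $\e_y^{n-N}\sum_j e\big(m\cdot\Psi(x_0,\e_y j)/\e_y\big)$ tend to zero for every $m\in\BZ^N\setminus0$. Without this equidistribution step (or some substitute for it), your proposed proof of the lower bound for the denominator is incomplete.
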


Lemma~\ref{lem:Lyapunov} implies that the family of random variables $\nrec(x_0+\e\chx)$, $\e>0$, satisfies the Lyapunov  condition for triangular arrays \cite [Definition 11.1.3]{ath_book}. By \cite[Corollary 11.1.4]{ath_book}, $N^{\text{rec}}:=\lim_{\e\to0}\nrec(x_0+\e\chx)$ is a Gaussian random variable, where the limit is in the sense of convergence in distribution. This completes the proof of Theorem~\ref{thm:Lyapunov1d}.

\subsection{Finite-dimensional distributions. Proof of Theorem~\ref{thm:Lyapunov_nd}}\label{ssec:grid recon}

Recall that the random vector $\vec N_\e^{\text{rec}}$ is defined in the paragraph preceding Theorem~\ref{thm:Lyapunov_nd}. Pick any vector $\vec\theta\in \br^L$. By \eqref{recon}
\be\label{recon dttpr 1}\begin{split}
\zeta_\e:=\vec\theta\cdot \vec N_\e^{\text{rec}} =& \kappa_\e \sum_{(y_j,z_k)\in\vs} \biggl[\sum_{l=1}^L \theta_l \big(G\big(y_j,k-b_j^{(l)}\big)+R_{j,k}^{(l)}\big)\biggr]\eta_{j,k},\\ 
b_j^{(l)}:=&\pa_x\Psi(x_0,y_j)\cdot\chx_l+(\Psi(x_0,y_j)/\e).
\end{split}
\ee 
To show that $\vec N_\e^{\text{rec}}$ converges in distribution to a Gaussian random vector, it suffices to show that for any $\vec\theta\in \br^L\setminus0$, $\lim_{\e\to0}\zeta_{\e}$ is a Gaussian random variable \cite [Theorem 10.4.5]{ath_book}. The following result is proven in appendix~\ref{sec:prf outline}.

\begin{lemma}\label{lem:Lyapunov_nd} Suppose the assumptions of Theorem~\ref{thm:Lyapunov_nd} are satisfied. With $b_j^{(l)}$ defined in \eqref{recon dttpr 1} and $\kappa_\e$ defined in \eqref{recon}, one has
\be\label{main lim Lnd}
\lim_{\e\to0}\frac{\kappa_\e^3\sum_{(y_j,z_k)\in\vs}\lvert \sum_{l=1}^L \theta_l \big(G\big(y_j,k-b_j^{(l)}\big)+R_{j,k}^{(l)}\big)\rvert^3\Eb\lvert \eta_{j,k}\rvert^3}{\big[\kappa_\e^2\sum_{(y_j,z_k)\in\vs}\big[\sum_{l=1}^L \theta_l \big(G\big(y_j,k-b_j^{(l)}\big)+R_{j,k}^{(l)}\big)\big]^2\Eb\eta_{j,k}^2\big]^{3/2}}=0.
\ee
\end{lemma}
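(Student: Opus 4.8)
The goal is to verify a Lyapunov-type ratio condition for the triangular array whose rows are indexed by $\e\to0$, with the relevant linear combination of weights being $c_{j,k}:=\sum_{l=1}^L\theta_l(G(y_j,k-b_j^{(l)})+R_{j,k}^{(l)})$. The plan is to establish two quantitative bounds: (i) an \emph{upper} bound for the numerator, $\kappa_\e^3\sum_{j,k}|c_{j,k}|^3\Eb|\eta_{j,k}|^3$, and (ii) a matching \emph{lower} bound for the denominator base, $\kappa_\e^2\sum_{j,k}c_{j,k}^2\Eb\eta_{j,k}^2$, so that their ratio to the $3/2$ power tends to zero. Throughout I would use the moment hypotheses from Assumption~\ref{ass:noise}: $\Eb\eta_{j,k}^2=\e^{2\ga}\e_y^{-(n-N)}\sigma^2(y_j,z_k)$ and $\Eb|\eta_{j,k}|^3=o(\e^{3\ga}\e_y^{-2(n-N)})$, together with $\kappa_\e=\e^{-\ga}\e_y^{n-N}$, so that $\kappa_\e^2\Eb\eta_{j,k}^2=\e_y^{n-N}\sigma^2(y_j,z_k)$ and $\kappa_\e^3\Eb|\eta_{j,k}|^3=o(\e_y^{n-N}\cdot\e_y^{n-N}\cdot\e_y^{-(n-N)})$... let me be careful: $\kappa_\e^3\Eb|\eta_{j,k}|^3=\e^{-3\ga}\e_y^{3(n-N)}\cdot o(\e^{3\ga}\e_y^{-2(n-N)})=o(\e_y^{n-N})$. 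Since the number of $z_k$ indices is $O(\e^{-N})$ and the number of $y_j$ indices is $O(\e_y^{-(n-N)})$, and $\e/\e_y$ is constant, these become Riemann-sum-type expressions.

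The key decay input is the pointwise bound $|G(u,\vartheta)|\le c(1+|\vartheta|)^{-(N+\ga)}$ from \eqref{G bnd} and the remainder bound \eqref{R bnd}; since $\ga>-N/2$ we have $2(N+\ga)>N$, which guarantees summability in $k$ of $|G(y_j,k-b_j^{(l)})|^2$ uniformly in $j$ and $\e$ (the shift $b_j^{(l)}$ is harmless because the lattice sum of $(1+|k-b|)^{-2(N+\ga)}$ over $k\in\BZ^N$ is bounded uniformly in $b$). The same argument with the exponent $3(N+\ga)>3N/2>N$ handles the cube. Hence the numerator is bounded: $\kappa_\e^3\sum_{j,k}|c_{j,k}|^3\Eb|\eta_{j,k}|^3\le o(\e_y^{n-N})\cdot(\#\{j\})\cdot\sup_j\sum_k|c_{j,k}|^3 = o(\e_y^{n-N})\cdot O(\e_y^{-(n-N)})\cdot O(1)=o(1)$; one must also check the remainder terms in $R_{j,k}^{(l)}$ contribute an even smaller order (they carry extra powers of $\e$ and still decay in $t$), which is routine. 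So the numerator tends to $0$.

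The real content — and the main obstacle — is the \emph{lower} bound on the denominator: I need $\kappa_\e^2\sum_{j,k}c_{j,k}^2\Eb\eta_{j,k}^2\ge c>0$ for all sufficiently small $\e$, i.e. this quantity does not degenerate. Writing it as $\sum_{y_j}\e_y^{n-N}\sigma^2(y_j,\Psi(x_0,y_j))\big(\sum_k c_{j,k}^2\big)+(\text{error})$, the inner $k$-sum is, up to Riemann-sum error, $\int_{\br^N}\big(\sum_l\theta_l G(x_0,y_j,r-\pa_x\Psi(x_0,y_j)\cdot\chx_l)\big)^2\dd r$ — a quadratic form in $\vec\theta$ whose value at fixed $y$ could vanish for special $\vec\theta$ and special $y$. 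This is exactly where Assumption~\ref{ass:x0}(2) (nonvanishing of $a_0$ and $\sigma$ on an open set $V$ and open cone $\Xi$, hence $G(x_0,y,\cdot)\not\equiv0$ for $y\in V$) and Assumption~\ref{ass:x0}(3) (the set $Y_2(x_0,\chx_l-\chx_{l'})$ where $\pa_x\Psi(x_0,y)(\chx_l-\chx_{l'})=0$ has measure zero) come in: together they prevent the integrand from vanishing on a positive-measure subset of $V$, because the Fourier transform of $\sum_l\theta_l G(x_0,y,\cdot-\beta_l(y))$ is $\tilde G(x_0,y,\hat\mu)\sum_l\theta_l e^{-i\hat\mu\cdot\beta_l(y)}$ with $\beta_l(y)=\pa_x\Psi(x_0,y)\chx_l$, and the exponential sum can vanish identically in $\hat\mu$ only if $\beta_l(y)=\beta_{l'}(y)$ for some coinciding indices, an event confined to a measure-zero set of $y$. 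I would thus fix a small ball inside $V$ away from all these exceptional sets, where $c_{j,k}^2$ sums to something bounded below uniformly, and conclude the denominator is bounded below. The Riemann-sum errors (from replacing $z=z(\e\check w,u)$ by $\Psi$, from the shift approximation \eqref{G approx}, from $R_{j,k}^{(l)}$, and from converting lattice sums to integrals) must be shown to be $o(1)$ or controlled by a small constant; Lemma~\ref{lem:generic cond} is the tool that keeps the contribution of the stationary-phase-degenerate set $Y_1(x_0,\xi)$ negligible in those error estimates. Assembling: numerator $\to0$, denominator $\ge c>0$, so the ratio in \eqref{main lim Lnd} tends to $0$.
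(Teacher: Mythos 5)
Your overall architecture (numerator $\to 0$; denominator base bounded below via Fourier transform, analyticity of $\tilde\ik$, Assumptions~\ref{ass:x0}(2),(3), and linear independence of the exponentials $e^{-i\hat\mu\cdot\pa_x\Psi(x_0,y)\chx_l}$ — equivalent to the paper's Vandermonde argument) matches the paper, and your numerator estimate is essentially identical to the paper's \eqref{sum G R}--\eqref{A.7}. The gap is in how you justify the convergence/lower bound of the denominator. You claim that, for fixed $j$, ``the inner $k$-sum is, up to Riemann-sum error, $\int_{\br^N}\big(\sum_l\theta_l G(x_0,y_j,r-\pa_x\Psi(x_0,y_j)\cdot\chx_l)\big)^2\dd r$.'' That is false in general: with $b_j^{(l)}=\pa_x\Psi(x_0,y_j)\cdot\chx_l+\Psi(x_0,y_j)/\e$, the quantity $\sum_{k\in\BZ^N} f^2(k-\Psi(x_0,y_j)/\e,\,y_j)$ is a genuinely nonconstant $\BZ^N$-periodic function of the shift $\Psi(x_0,y_j)/\e$ (the spacing of the lattice is comparable to the width of $G$, there is no small parameter), and for particular shifts it can differ substantially from $\int_{\br^N}f^2\dd r$ — it can even vanish if $f(\cdot,y_j)$ happens to vanish on a shifted lattice. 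So a pointwise-in-$j$ lower bound, or a ``Riemann-sum error'' bound for the $k$-sum at fixed $j$, is not available, and restricting to a small ball in $V$ does not by itself rescue the argument.

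What actually makes the denominator converge is the averaging over $j$: one must show that the fractional parts of $\Psi(x_0,y_j)/\e$ equidistribute modulo $\BZ^N$ as $\e\to0$, so that $\e_y^{n-N}\sum_j \psi(y_j,b_j)\sigma^2(\cdot)$ converges to $\int_\CY\big(\int_{[0,1]^N}\psi(y,r)\dd r\big)\sigma^2\dd y=\int_\CY\int_{\br^N}f^2(r,y)\dd r\,\sigma^2\dd y$. This is the content of the Weyl-type machinery in the paper (Theorem~\ref{thm:weil}, Lemma~\ref{lem:udpts-e}, Corollaries~\ref{cor:udpts-v2-e}--\ref{cor:udpts-2d-e}), and it is precisely here that Assumption~\ref{ass:x0}(1) via Lemma~\ref{lem:generic cond} enters: it guarantees the exponential-sum bounds needed for equidistribution by excising the set where $\pa_y^2(\xi\cdot\Psi)$ degenerates. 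In your write-up Lemma~\ref{lem:generic cond} is cited only as a device ``to keep error estimates negligible,'' which mischaracterizes its role and leaves the central convergence step unproved. Once you replace the fixed-$j$ Riemann-sum claim with this equidistribution argument (applied to $\CY$, or to your chosen ball, which is legitimate since all terms are nonnegative), the rest of your proposal — positivity of the limiting quadratic form in $\vec\theta$ and the $o(1)$ numerator — goes through as in the paper.
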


It follows from \cite [Corollary 11.1.4]{ath_book} that $\zeta=\lim_{\e\to0} \zeta_{\e}$ is a Gaussian random variable. Hence, by \cite [Theorem 10.4.5]{ath_book}, $\lim_{\e\to0}\vec N_\e^{\text{rec}}$ is a Gaussian random vector, where as before, the limit is in the sense of convergence in distribution. This completes the proof of Theorem~\ref{thm:Lyapunov_nd}.

\section{Proof of Theorem~\ref{GRF_thm}}\label{sec:thrd thrm prf}

We use the following definition and theorem.

\begin{definition}[{\cite[p. 189]{Khoshnevisan2002}}]\label{def:kh_thm}
Let $\BP_n$ be the distribution of a $C$-valued random variable $X_n$, $1\leq n\leq \infty$. The collection $(\BP_n)$ is tight if for all $\de\in(0,1)$, there exists a compact set $\Gamma_\de\in C$ such that $\sup_n \BP(X_n\not\in\Gamma_\de)\le \de$. 
\end{definition}

\begin{theorem}[{\cite[Proposition 3.3.1]{Khoshnevisan2002}}]\label{kh_thm}
Suppose $X_n,\ 1\leq n\leq \infty$, are $C$-valued random variables. Then $X_n\to X_{\infty}$ weakly (i.e., the distribution of $X_n$ converges to that of $X_\infty$, see {\cite[p. 185]{Khoshnevisan2002}}) provided that:
\begin{enumerate}
\item Finite dimensional distributions of $X_n$ converge to that of $X_{\infty}$.
\item $(X_n)$ is a tight sequence.
\end{enumerate}
\end{theorem}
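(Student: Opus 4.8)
\textbf{Proof plan for Theorem~\ref{kh_thm} (Khoshnevisan's criterion).}

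The plan is to reduce weak convergence of the $C$-valued random variables $X_n$ to the combination of (i) convergence of finite-dimensional distributions and (ii) tightness, which is the classical Prokhorov-type argument specialized to the Polish space $C=C(\bar D,\br)$ with the uniform metric \eqref{Cmetric}. First I would recall that $C$, being a closed subset of the separable Banach space of bounded functions on a countable dense subset of $\bar D$ (equivalently, a complete separable metric space), is Polish; hence Prokhorov's theorem applies: a family of probability measures on $C$ is relatively compact (in the topology of weak convergence) if and only if it is tight. Given hypothesis (2), the sequence $(\BP_n)_{n<\infty}$ of laws of $X_n$ is tight, so by Prokhorov every subsequence has a further subsequence converging weakly to some probability measure $\mu$ on $C$.

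Next I would identify every such subsequential limit $\mu$ with $\BP_\infty$, the law of $X_\infty$. The tool is that the finite-dimensional projections separate points of $C$: for a finite set $\{\chx_1,\dots,\chx_L\}\subset\bar D$ let $\pi_{\chx_1,\dots,\chx_L}:C\to\br^L$ be the (continuous) evaluation map $f\mapsto(f(\chx_1),\dots,f(\chx_L))$. If a subsequence $X_{n_k}\Rightarrow\mu$, then by the continuous mapping theorem the pushforwards $\pi_{\chx_1,\dots,\chx_L}{}_*\BP_{n_k}$ converge weakly to $\pi_{\chx_1,\dots,\chx_L}{}_*\mu$; on the other hand hypothesis (1) says $\pi_{\chx_1,\dots,\chx_L}{}_*\BP_{n_k}\Rightarrow\pi_{\chx_1,\dots,\chx_L}{}_*\BP_\infty$. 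Hence $\mu$ and $\BP_\infty$ have the same finite-dimensional distributions. Since the algebra of cylinder sets generated by the maps $\pi_{\chx_1,\dots,\chx_L}$ generates $\CB(C)$ (this uses separability of $\bar D$: a uniformly continuous function on $\bar D$ is determined by its values on a countable dense set, and the metric $d$ is measurable with respect to this algebra), a standard $\pi$--$\lambda$ (monotone class) argument shows $\mu=\BP_\infty$ as Borel measures on $C$.

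Finally I would assemble the two steps: every subsequence of $(\BP_n)$ has a further subsequence converging weakly to the same limit $\BP_\infty$; by the standard subsequence principle for weak convergence of probability measures on a metric space, this forces the whole sequence $\BP_n$ to converge weakly to $\BP_\infty$, i.e.\ $X_n\to X_\infty$ weakly. The main obstacle, and the only place where genuine care is needed, is the measure-theoretic identification step: one must verify that the finite-dimensional cylinder sets form a separating class that generates the Borel $\sigma$-algebra of $(C,d)$, so that equality of finite-dimensional distributions upgrades to equality of laws. This is where separability of $\bar D$ and completeness of $C$ enter essentially; everything else is Prokhorov's theorem plus the continuous mapping theorem applied to the evaluation projections.
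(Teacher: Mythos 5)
This statement is quoted in the paper directly from \cite[Proposition 3.3.1]{Khoshnevisan2002} and is not proved there, so there is no in-paper argument to compare against; your proof is the standard one (tightness plus Prokhorov gives subsequential weak limits, finite-dimensional distributions identify every such limit with the law of $X_\infty$ because the cylinder $\sigma$-algebra coincides with $\CB(C)$, and the subsequence principle concludes), and it is correct. One small repair: your justification of separability of $C=C(\bar D,\br)$ via embedding into the space of bounded functions on a countable dense subset of $\bar D$ does not work as stated, since that space is an $\ell^\infty$-type space and is not separable; separability of $C(\bar D)$ should instead be obtained from compactness of $\bar D$ together with Stone--Weierstrass (e.g.\ polynomials with rational coefficients are dense), after which the rest of your argument goes through unchanged.
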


Theorem~\ref{thm:Lyapunov_nd} asserts that all finite-dimension distributions of $\nrec(x_0+\e\chx)$ converge to that of the GRF $N^{\text{rec}}(\chx)$. Thus it remains to verify property (2) of Theorem \ref{kh_thm}. To this end, we
introduce the sets 
\be\label{compact sets}
\Gamma_\de:=\{f\in C:\, \Vert f\Vert_{W^{M,2}(D)}^2 \le 1/\de\}.
\ee
Recall that $M$ controls the smoothness of $\ik$ (see Assumption~\ref{ass:ker}).
Recall also that $W^{l,p}(D)$ is the closure of $C^\infty(\bar D)$ in the norm:
\be\label{Wkp norm}
\Vert f\Vert_{l,p}:=\bigg(\int_{D}\sum_{|m|\le l}|\pa_x^m f(x)|^p\dd x\bigg)^{1/p},\ f\in C^\infty(\bar D).
\ee

Since $D$ has a Lipschitz continuous boundary and $M>n/2$, from \cite[Theorem 7.26, p. 171]{GilbTrud} it follows that the imbedding $W^{M,2}(D)\hookrightarrow C(\bar D)$ is compact. Hence the set $\Gamma_\de\subset C$ is compact for every $\de>0$. 

Adopting the argument \eqref{chech v def}--\eqref{Fall bnd} to the full symbol of $\CA$ (which only requires replacing \eqref{A1 homog} with \cite[Theorem 5.12]{abels12}), it is easy to see that
\be\label{Fall chxder bnd}\begin{split}
&\e^\ga |\pa_{\chx}^m F_\e(x,y,z)|\le c (1+|z-\Psi(x,y)|/\e)^{-(N+\ga)},\\ 
&x=x_0+\e\chx,\ \chx\in D,\ (y,z)\in\vs,\ m\in\BN_0^n,|m|\le M.
\end{split}
\ee
This calculation uses Assumption~\ref{ass:grt} and that the derivatives $\ik^{(m)}(w)$, $|m|\le M$, where $M>n/2$, are bounded. Therefore
\be\label{var der}
\begin{split}
&\Eb(\pa_{\chx}^m N_\e^{\text{rec}}(x_0+\e\chx))^2\\
&= \e_y^{n-N}\sum_{(y_j,z_k)\in\vs} \big[\e^\ga \pa_{\chx}^m F_\e(x,y_j,z_k)\big]^2\sigma^2(y_j,z_k)\\
&\le c \e^{n-N}\sum_{(y_j,z_k)\in\vs} \big[1+|k-(\Psi(x_0,y_j)/\e)|\big]^{-2(N+\ga)}\le c,\  \chx\in D.
\end{split}
\ee
This implies that $\Eb \Vert N_\e^{\text{rec}}(x_0+\e\chx))\Vert_{W^{M,2}(D)}^2\le c$ for all $\e>0$.
By the Chebyshev inequality,
\be\label{cheb}
\BP(N_\e^{\text{rec}}(x_0+\e\chx)\not\in\Gamma_\de)=\BP(\Vert N_\e^{\text{rec}}(x_0+\e\chx)\Vert_{W^{M,2}(D)}^2 \ge 1/\de)\le c\de.
\ee
Therefore $(N_\e^{\text{rec}}(x_0+\e\chx))$, $0<\e\ll1$, is a tight sequence. 

By Theorem~\ref{kh_thm}, $\nrec(x_0+\e\chx)\to N^{\text{rec}}(\chx)$, $\chx\in\bar D$, in distribution as $C$-valued random variables. Since $C$ is a complete metric space, it follows that $N^{\text{rec}}\in C$ has continuous sample paths with probability 1.

By the linearity of the expectation, $N^{\text{rec}}(\chx)$ is a zero mean GRF. To completely characterize this GRF, we calculate its covariance $\text{Cov}(\chx,\chy)=\lim_{\e\to0}\Eb(N_\e^{\text{rec}}(\chx)N_\e^{\text{rec}}(\chy))$,  $\chx,\chy\in D$. In fact, essentially this has already been done in the proof of Lemma~\ref{lem:Lyapunov_nd}: equation \eqref{recon dttpr 4} implies \eqref{Cov main}.

\section{Numerical experiments}\label{sec:numerics}

For a numerical experiment we consider cone beam local tomography reconstruction, see subsection~\ref{ssec:CBT} and Figure~\ref{fig:cbct}. Recall that the detector coordinates are $(u,v)\in\br^2$ and the source coordinate is $s\in[0,2\pi)$, see \eqref{source}, \eqref{detector}. The data are given at the points:
\be \label{data exper}
s=\Delta s j,\ u=\e k_1,\ v=\e k_2,\ j,k_1,k_2\in\BZ.
\ee

To reconstruct an image we backproject the second derivative of the cone beam transform along detector rows (i.e., along $u$). In the continuous case the reconstruction formula is as follows.
\be\label{recon CB}\bs
\frec(x)=&\int_0^{2\pi}\pa_u^2 g(s,U(x,s),V(x,s))\dd s\\
=&\int_0^{2\pi}\iint_{\br^2}\de^{\prime\prime}(U(x,s)-u)\de(V(x,s)-v) g(s,u,v)\dd u\dd v\dd s.
\end{split}
\ee
This is local tomography reconstruction \cite{luma, rk}. Representing the two delta-functions in terms of their Fourier transforms, we see that the complete symbol of the operator $\CA$ coincides with its principal symbol and $\ga=2$. 
Reconstruction from discrete data (consisting only of noise) is given by 
\be\label{recon CB discr}
\nrec(x)=\frac{\Delta s}{\e^2} \sum_{j,k}\ik^{\prime\prime}\bigg(\frac{U(x,s)-\e k_1}\e\bigg)\ik\bigg(\frac{V(x,s)-\e k_2}\e\bigg)\eta_{j,k}.
\ee
By \eqref{F def},
\be\label{CB Fe}\bs
F_\e(x,s,u^\prime,v^\prime)=&\int_{\br^2}\de^{\prime\prime}(U(x,s)-u)\de(V(x,s)-v)\\
&\hspace{1cm}\times \ik((u-u^\prime)/\e)\ik((v-v^\prime)/\e)\dd u\dd v\\
=&\frac1{\e^2}\ik^{\prime\prime}\bigg(\frac{U(x,s)-u^\prime}\e\bigg)\ik\bigg(\frac{V(x,s)-v^\prime}\e\bigg).
\end{split}
\ee
The function $\Phi$ is linear in $(u,v)$ (cf. \eqref{CB fns}) and the symbol of $\CA$ is independent of $(u,v)$, hence $\bar F_\e\equiv F_\e$. Then, by \eqref{bFG}, $G(\vartheta)=\ik^{\prime\prime}(\vartheta_1)\ik(\vartheta_2)$. Note that $G$ does not depend on $x$ and $s$. From \eqref{Cov main},
\be\label{GG conv}
(G\star G)(\vartheta)=\int_\br \ik^{\prime\prime}(\vartheta_1+r_1)\ik^{\prime\prime}(r_1)\dd r_1\int_\br \ik(\vartheta_2+r_2)\ik(r_2)\dd r_2.
\ee
Finally, the covariance function is given by (cf. \eqref{Cov main})
\be\label{covar LT}
C(\vartheta)=\int_0^{2\pi} (G\star G)\big(\pa_x (U(x_0,s),V(x_0,s))\vartheta\big)\sigma^2(s,U(x_0,s),V(x_0,s))\dd s,
\ee
where $\vartheta=\chx_1-\chx_2$. The derivatives $\pa_x U(x_0,s)$ and $\pa_x V(x_0,s)$ are computed explicitly using \eqref{det proj}.

Next we describe a numerical experiment to verify the obtained results, namely Theorem~\ref{GRF_thm}. The center of a local patch and two nearby points $x_k=x_0+\e\chx_k$, $k=1,2$ are selected as follows:
\be\label{points}\begin{split}
&x_0=(2.7,-3.1,0.8),\\ 
&\chx_1=(2.159,3.075, -0.418),\ \chx_2=(2.546, -2.974, 0.983).
\end{split}
\ee
The radius of the source trajectory is $R=10$ (see \eqref{source}). The values of $\eta_{j,k}$ are computed by the formula:
\be\bs\label{noise}
\eta_{j,k} =& (\e^2/\Delta s) h(s_j,u_{k_1},v_{k_2}) \nu_{j,k},\\ 
h(s,u,v)=&(1 + 0.5\sin(2s))(1 - 0.4\cos u)(1 + 0.6\sin v),
\end{split}
\ee
and $\nu_{j,k}$ are i.i.d. random variables drawn from the uniform distribution on $[-1,1]$. 
Comparing with \eqref{noise var}, we see that $(\e^2/\Delta s)^2=\e^{2\ga}/\Delta s^{n-N}$ (because $\e_y=\Delta s$, $\ga=2$ and $n-N=2$) and $\sigma^2(s,u,v)=(1/3)h^2(s,u,v)$. In the experiment, $2\cdot 10^4$ realizations of noisy sinograms, i.e. the sets $\{\eta_{j,k}\}$, have been used. We set 
\be
\Delta s=2\pi/500,\ \e=\Delta u=\Delta v=0.05.
\ee
The kernel function $\ik$ is obtained by convolving the interpolation kernel $(1-|s|)_+$ and the smoothing kernel $c_l(1-(s/a)^2)_+^l$:
\be
\ik(t)=\frac{(2l+1)!!}{2a(2l)!!}\int_{-1}^1 (1-|s|)\big[1-((t-s)/a)^2\big]_+^l\dd s,\ a=2.5,\ l=3.
\ee

Let us summarize the results of the experiment. At $x_0$, the sample variance is 0.488 and predicted variance is 0.485. The latter is obtained using \eqref{covar LT} with $\vartheta=0$. The observed probability density function (PDF), i.e., the normalized histogram, and predicted PDF are shown in Figure~\ref{fig:centerPDF1d}. The histogram is computed using 21 bins. The predicted PDF, which is a Gaussian with zero mean and variance 0.485, is computed by evaluating the Gaussian density at the center of each bin. The relative mismatch between the observed and predicted discretized PDFs (denoted $P_o$ and $P_p$, respectively) is found to be $\Vert P_o-P_p\Vert_1/\Vert P_p\Vert_1=0.021$. Here and below, $\Vert\cdot\Vert_1$ denotes the appropriate discrete $l^1$ norm (the sum of absolute values of the entries of a vector or matrix).

\begin{figure}[h]
{\centerline{
{\epsfig{file={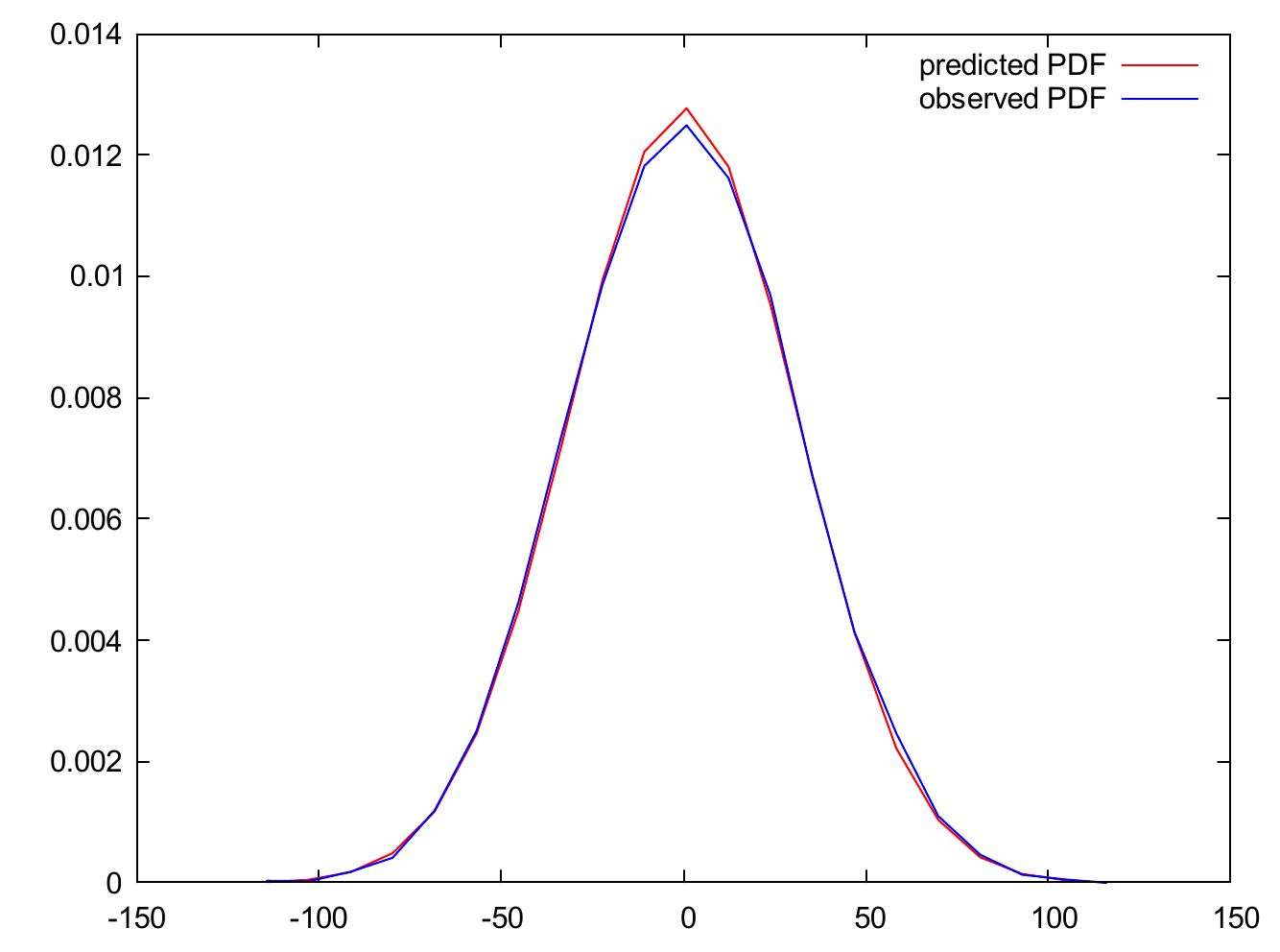}, width=10cm}}
}}
\caption{Observed PDF and predicted PDF for the random variable $\nrec(\check x=0)$ (i.e., at $x_0$ in the original coordinates). The $x$-axis represents the values of the random variable.}
\label{fig:centerPDF1d}
\end{figure}

Next we test the accuracy of our two-point estimates. For the two points $\chx_1$ and $\chx_2$ in \eqref{points}, the observed and predicted covariances and the relative error between them are found to be
\be\label{covars}\begin{split}
&\text{Cov}_o=\begin{pmatrix} 0.479 & 0.013\\
 0.013& 0.458\end{pmatrix},\ 
\text{Cov}_p=\begin{pmatrix}  0.485 & 0.011\\
 0.011 & 0.485\end{pmatrix},\\ 
&\Vert \text{Cov}_o-\text{Cov}_p\Vert_1/\Vert \text{Cov}_p\Vert_1=0.035.
 \end{split}
\ee
The diagonal entries of the covariance matrix $\text{Cov}_p$ are equal $C(0)$, and off-diagonal elements are equal $C(\chx_1-\chx_2)$. The observed 2D PDF and predicted 2D PDF are shown in Figure~\ref{fig:centerPDF2d}. The histogram is computed using $21\times21$ bins. The predicted PDF is computed by evaluating the predicted Gaussian density at the center of each bin. The relative mismatch between the observed and predicted discretized PDFs is found to be $\Vert P_o-P_p\Vert_1/\Vert P_p\Vert_1=0.079$, which is quite small as well.
\begin{figure}[h]
{\centerline{\hbox{
{\epsfig{file=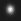}, width=4.5cm}}
{\epsfig{file={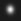}, width=4.5cm}}
{\epsfig{file={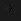}, width=4.5cm}}
}}}
\caption{Observed PDF (left), predicted PDF (center), and their difference (right) for the random vector $(\nrec(\check x_1),\nrec(\check x_2))$. The display range is [-2E-5,1.7E-4] for all three figures. The horizontal and vertical axes represent the values of $\nrec(\check x_1)$ and $\nrec(\check x_2)$, respectively.}
\label{fig:centerPDF2d}
\end{figure}

\appendix

\section{Proofs of two auxiliary results}\label{sec:}\label{sec:two aux res}

\subsection{Proof of \eqref{F-F1}}\label{ssec:ff1}
By \eqref{F v3} and \eqref{F1 def}, we have to estimate the quantity
\be\label{del A1 ker}
\begin{split}
&J:=(2\pi)^{-N}\int_{\br^N}\int_{W_\e(u,z^\prime)} \Delta a_1(u,\e\check w,\hat\xi)\ik\bigg(\frac{z(\e\check w,u)-z^\prime }\e\bigg)e^{i\hat\xi\cdot \check w}\dd \check w \dd\hat\xi,\\
&\Delta a_1(u,w,\hat\xi):=a_1(u,z(w,u),\hat\xi)-a_1(u,\Psi(u),\hat\xi),\ (u,z^\prime)\in\Lambda.
\end{split}
\ee
Suppose $|\check v|\le c$. By \eqref{triangle} and using Assumption~\ref{ass:grt}(2) and Assumption~\ref{ass:ker}, the analog of \eqref{G1 bnd} becomes
\be\label{del G1 bnd}\begin{split}
|H_\e(u,z^\prime ,\hat\xi)|\le & c\e (1+|\hat\xi|)^{\ga-M_1},\ \hat\xi\in\br^N.
\end{split}
\ee
Similarly to \eqref{F v4}, integration by parts to prove \eqref{del G1 bnd} involves $\pa_{\check w}$ rather than $\pa_w$. Integrating with respect to $\hat\xi$ gives $|J|\le c\e$.

If $|\check v|\ge c$, \eqref{part 2} implies
\be\label{J1st}
J=\int_{W_\e(u,z^\prime)} \big[A_1(u,z^\prime ,\check w)-A_1(u,\Psi(u),\check w)\big]\ik\bigg(\frac{z(\e \check w,u)-z^\prime }\e\bigg)\dd \check w.
\ee
By Assumption~\ref{ass:grt}(2), the kernel $A_1$ depends smoothly on $z^\prime$. Therefore \eqref{A1 homog} yields:
\be\label{del F2 bnd_1}\begin{split}
|J|\le c \e|\check v|(1+|\check v|)^{-(N+\ga)}\text{  if  }  |\check v|\ge c.
\end{split}
\ee
The desired assertion \eqref{F-F1} now follows.

\subsection{Proof of \eqref{F1-Fbar}}\label{ssec:f1fb}

We need to estimate the quantity
\be\label{del Abar ker}
\begin{split}
J:=&(2\pi)^{-N}\int_{\br^N} \Delta H_\e(u,z^\prime,\hat\xi)\dd\hat\xi,\ (u,z^\prime)\in\Lambda,\\
\Delta H_\e(u,z^\prime,\hat\xi):=&\int_{\br^N} a_1(u,\Psi(u),\hat\xi)\\
&\times \bigg[\ik\bigg(\frac{z(\e\check w,u)-z^\prime }\e\bigg)-\ik\bigg(\frac{\bar z(\e\check w,u)-z^\prime }\e\bigg)\bigg]e^{i\hat\xi\cdot \check w}\dd \check w.
\end{split}
\ee
Using \eqref{del phi} and Assumptions~\ref{ass:grt}(2) and \ref{ass:ker}, the analogs of \eqref{F v4} and \eqref{G1 bnd} become
\be\label{del F v4}\begin{split}
|\Delta H_\e(u,z^\prime ,\hat\xi)|\le &
c(1+|\hat\xi|)^{\ga-M_1}  \int_{|\check w|\le c} \big[\min(1,\e|\check w|^2)+\e|\check w|\big]\dd \check w\\
\le & c\e (1+|\hat\xi|)^{\ga-M_1},\ \hat\xi\in\br^N,|\check v|\le c.
\end{split}
\ee
Integrating with respect to $\hat\xi$ gives $|J|\le c\e$, $|\check v|\le c$. As before, we need $\ik\in C_0^{M_1+1}(\br^N)$ for this to work. Suppose now $|\check v|\ge c$. Similarly to \eqref{J1st},
\be\label{J2nd}
J=\int_{\br^n} A_1(u,\Psi(u),\check w)\bigg[\ik\bigg(\frac{z(\e \check w,u)-z^\prime }\e\bigg)-\ik\bigg(\frac{\bar z(\e \check w,u)-z^\prime }\e\bigg)\bigg]\dd \check w.
\ee
By \eqref{del phi}, the analog of \eqref{F2 bnd} becomes
\be\label{del F2 bnd_2}\begin{split}
|J|\le c \big[\min(1,\e|\check v|^2)+\e|\check v|\big](1+|\check v|)^{-(N+\ga)}\text{  if  }  |\check v|\ge c.
\end{split}
\ee

Even though the integrals \eqref{del Abar ker} and \eqref{J2nd} involve $\ik$ of two different arguments, the domain of integration of each of them has the same key properties as $W_\e(u,z^\prime)$: it is a uniformly bounded set, it is confined to a ball $|\check w|\le c$ if $|\check v|\le c$, and it is a subset of $\{\check w\in\br^N:|\check w|\ge c|\check v|\}$ if $|\check v|\ge c$. 

The desired assertion \eqref{F1-Fbar} now follows.

\section{Proof of Lemma~\ref{lem:generic cond}}\label{sec:generic cond prf}
Since $x_0$ is fixed, in this and the remaining appendices we omit $x_0$ from most notations. 

Let $d_0(\xi) $ be the upper box-counting dimension of $Y_1(\xi)$, see Definition~\ref{def:box-dim} and \eqref{Ydef}. Then $Y_1(\xi)$ can be covered by no more than $(1/\de_1)^d$ (hyper)cubes $\mathcal B_k\subset\br^{n-N}$ with side length $\de_1$ for any $d>d_0(\xi)$ and $\de_1>0$ sufficiently small. Replace each $\mathcal B_k$ with a (slightly) larger open cube $\mathcal B_k^{(1)}$ (say, twice the volume), which has the same center and orientation as $\mathcal B_k$. The sum of their $(n-N)$-dimensional volumes satisfies 
$$
\sum_k \text{Vol}(\mathcal B_k^{(1)})\le c\de_1^{(n-N)-d}\to0,\ \de_1\to0,
$$
because we can take $d_0(\xi)<d<n-N$. Find $\de_1>0$ so that $\sum_k \text{Vol}(\mathcal B_k^\prime)\le \de/2$. By construction, the interior of $\cup_k \mathcal B_k^{(1)}$ is a neighborhood of $Y_1(\xi)$. 

Pick any $u\in\BZ^{n-N}$ and $\xi\in\br^N\setminus0$. By construction, $\text{det}(\pa_y^2(\xi\cdot\Psi))$ is bounded away from zero on the set $\CY\setminus (\cup_k\mathcal B_k^{(1)})$. Therefore, there are finitely many solutions to the equation $\pa_y(\xi\cdot\Psi(y))=u$ in $\CY\setminus (\cup_k\mathcal B_k^{(1)})$. Cover these solutions by finitely many sufficiently small open cubes $\mathcal B_k^{(2)}$, which satisfy $\sum_{k}\text{Vol}(\mathcal B_k^{(2)})\le \de/2$. Let $\mathcal B$ denote the union of all the cubes $\mathcal B_k^{(1)}$, $\mathcal B_k^{(2)}$.
By construction, the interior of $\mathcal B$ is a neighborhood of the set of solutions to $\pa_y(\xi\cdot\Psi(y))=u$, $y\in\CY$. Therefore,
\be
\sum_{k}\text{Vol}(\mathcal B_k^{(1)})+\sum_{k}\text{Vol}(\mathcal B_k^{(2)})\le \de,\ \inf_{y\in\CY\setminus \mathcal B} |\pa_y(\xi\cdot\Psi(y))-u|=:p>0,
\ee
and the lemma is proven.

\section{Proof of Lemma~\ref{lem:Lyapunov}}\label{sec:variance}
Let $D_\e$ be the expression in brackets in the denominator in \eqref{main lim L1d}. By \eqref{noise var},
\be\bs
&\kappa_\e^2\sum_{z_k\in\CZ}\big|(G(y_j,k-b_j)+R_{j,k})^2-G^2(y_j,k-b_j)\big|\Eb \eta_{j,k}^2\\
&\le c\e^{n-N}\sum_{z_k\in\CZ}\big[R_{j,k}^2+|G(y_j,k-b_j)||R_{j,k}|\big].
\end{split}
\ee
See \eqref{recon} for the definitions of $\kappa_\e$ and $b_j$. Using \eqref{G bnd} and \eqref{R bnd},  straightforward calculations shows that 
\be\label{sum R bnd}\bs
\sum_{z_k\in\CZ}R_{j,k}^2\le c\e^\de,\ 
\sum_{z_k\in\CZ}|G(y_j,k-b_j)||R_{j,k}|\le c\e^\de
\end{split}
\ee
for some $\de>0$ as long as $\ga>-N/2$. Therefore, by \eqref{noise var}
\begin{equation}\label{Deps v1}
D_\e=\e_y^{n-N}\sum_{(y_j,z_k)\in\vs} G^2(y_j,k-b_j)\sigma^2(y_j,z_k)+O(\e^\de).
\end{equation}


Define 
\begin{equation}\label{psi_a_b}\begin{split}
\psi(y,r):=&\sum_{k\in\BZ^N}  G^2(y,k-r),\ y\in\CY,r\in\br^N.
\end{split}
\end{equation} 
By \eqref{G bnd} the series is absolutely convergent. It is easy to see that
\begin{align}\label{Psi per}
\psi(y,r+m)=\psi(y,r),\  y\in\CY,r\in\br^N,m\in\BZ^N.
\end{align}
Replace $z_k$ with $\Psi(y_j)$ in the arguments of $\sigma^2$ in \eqref{Deps v1} to obtain
\begin{equation}\label{Deps}
D_\e=\e_y^{n-N}\sum_{y_j\in\CY} \psi(y_j,b_j)\sigma^2(y_j,\Psi(y_j))+O(\e^\de).
\end{equation}
Indeed, consider the sum with respect to $k$. By the definition of $b_j$ in \eqref{recon} we have
$z_k-\Psi(y_j)=\e(k-b_j+O(1))$. By \eqref{G bnd} and Lipschitz continuity of $\sigma$:
\be\bs\label{sigma mod}
&\sum_{|k|\le O(1/\e)} G^2(y,k-b_j)|\sigma^2(y,\e k)-\sigma^2(y,\Psi(y_j))|\\
&\le c\sum_{|k|\le O(1/\e)} \e(1+|k-b_j|)(1+|k-b_j|)^{-2(N+\ga)}=O(\e^a)\to0,
\end{split}
\ee
where $a=\min(1,2\ga+N)$. The assumption $\ga>-N/2$ implies $a>0$. Summing over $y_j\in\CY$ and accounting for the factor $\e_y^{n-N}$ proves the claim.

Using arguments similar to \cite{Katsevich_2024_BV}, we prove in Section~\ref{sec: 2nd DTB} the following result
\begin{align}\label{lim De}
\lim_{\e \to 0}D_\e&=\int_{\CY}\bigg(\int_{[0,1]^N} \psi(y,r) \dd r\bigg)\sigma^2(y,\Psi(y)) \dd y.
\end{align} 
Furthermore, 
\begin{equation}\label{lim psi int}\bs
\int_{[0,1]^N} \psi(y,r)\dd r&=\sum_{k\in\BZ^N} \int_{[0,1]^N}G^2(y,k-r)\dd r\\
&=\int_{\br^N}G^2(y,r)\dd r=:H(y).
\end{split}
\end{equation}
By Parseval's theorem and \eqref{bFG},
\be\bs
\int_{\br^N}G^2(y,r)\dd r = & (2\pi)^{-N}\int_{\br^N}\big| \tilde G(y,\hat\mu)\big|^2\dd\hat\mu\\
=&(2\pi)^{-N}\int_{\br^N}| a_2(y,\hat\mu)\tilde\ik(\hat\mu)|^2\dd\hat\mu,
\end{split}
\ee
where $\tilde G(y,\hat\mu)$ denotes the $N$-dimensional Fourier transform of $G(y,r)$ with respect to $r$. Returning the dependence on $x_0$, we get using Assumption~\ref{ass:x0}(2)
\begin{align}\label{A.6}
\lim_{\e \to 0}D_\e=\int_{\CY} H(x_0,y)\sigma^2(y,\Psi(x_0,y)) \dd y>0.
\end{align}


Consider now the numerator in \eqref{main lim L1d}. Using \eqref{noise var}, \eqref{G bnd}, and \eqref{R bnd}, we obtain after simple calculations:
\be\label{sum G R}
\kappa_\e^3\sum_{z_k\in\CZ}\lvert G(y_j,k-b_j)+R_{j,k}\rvert^3 \Eb\lvert \eta_{j,k}\rvert^3
=o(\e^{n-N}),
\ee
where the small-$o$ is uniform with respect to $j$. Then the numerator in \eqref{main lim L1d} is bounded by 
\begin{align}\label{A.7}
\sum_{y_j\in\CY}o(\e^{n-N})=o(1).
\end{align}
Together with \eqref{A.6} this proves Lemma~\ref{lem:Lyapunov}.

\section{Proof of \eqref{lim De}}\label{sec: 2nd DTB}

Let $D_\e^\prime$ denote the first term on the right in \eqref{Deps}. We can write $D_\e^\prime$ in the following form
\begin{equation}\label{Deps 1}\bs
&D_\e^\prime=\e_y^{n-N}\sum_{y_j\in\CY} g\big(y_j,\pa_x\Psi(y_j)\cdot\chx+(\Psi(y_j)/\e)\big),\\ 
&g(y,r):=\psi(y,r)\sigma^2(y,\Psi(y)),
\end{split}
\end{equation}
cf. the definition of $b_j$ in \eqref{recon}. By \eqref{Psi per}, $g(y,r)=g(y,r+m)$ for any $y\in\CY,r\in\br^N$, and $m\in\BZ^N$. 

The proof of \eqref{lim De} is done in two steps. In section~\ref{ssec:ud} we prove several auxiliary results related to the uniform distribution property of a collection of points. The final calculation of the limit then follows easily and this is done in section~\ref{ssec:limDe}. 

\subsection{Step I: using the uniform distribution property}\label{ssec:ud}

Given two vectors $a,b\in\br^N$ such that $a_l<b_l$, $1\le l\le N$, we define the hyperrectangle $[a,b]:=[a_1,b_1]\times\dots\times[a_N,b_N]$. Recall that $e(t)=\exp(2\pi i t)$, $t\in\br$.


\begin{theorem}\label{thm:weil} Let $U\subset\br^{n-N}$ be a bounded domain and $g\in C^2(\br^N)$ be a function, which satisfies $g(r)=g(r+m)$ for any $r\in\br^N$ and $m\in\BZ^N$. Suppose the points $v_j(\e)\in\br^N$, $j\in\BZ^{n-N}$, $\e j\in U$, $0<\e\ll1$, have the property
\be
\lim_{\e\to 0}\e^N \sum_{\e j\in U}e(m\cdot v_j(\e))=0 
\ee
for any $m\in \mathbb Z^N\setminus0$. Then 
\be
\lim_{\e\to 0}\e^N \sum_{\e j\in U}g(v_j(\e))=|U|\int_{[0,1]^N}g(r)\dd r.
\ee
\end{theorem}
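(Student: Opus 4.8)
The plan is to establish this as a multidimensional Weyl equidistribution criterion via Fourier analysis on the torus. First I would expand $g$ in its Fourier series: since $g$ is $\BZ^N$-periodic and $C^2$, one has $g(r)=\sum_{m\in\BZ^N}\hat g(m)e(m\cdot r)$ with Fourier coefficients $\hat g(m)=\int_{[0,1]^N}g(r)e(-m\cdot r)\dd r$, and the $C^2$ assumption guarantees $|\hat g(m)|\le c(1+|m|)^{-2}$, so the series converges absolutely and uniformly. Note $\hat g(0)=\int_{[0,1]^N}g(r)\dd r$, which is exactly the constant that should appear in the conclusion.

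Next I would substitute this series into the sum and interchange the (finite) sum over $j$ with the (absolutely convergent) sum over $m$:
\be
\e^N\sum_{\e j\in U}g(v_j(\e))=\sum_{m\in\BZ^N}\hat g(m)\,\e^N\sum_{\e j\in U}e(m\cdot v_j(\e)).
\ee
The $m=0$ term equals $\hat g(0)\cdot\e^N\#\{j:\e j\in U\}$, which converges to $\hat g(0)|U|$ since $\e^N\#\{j:\e j\in U\}\to|U|$ as $\e\to0$ (a Riemann-sum count for the bounded domain $U$; this uses that $U$ is, say, Jordan measurable — I would note the mild regularity needed on $\partial U$, or assume it as in Assumption~\ref{ass:domains}). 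For each fixed $m\ne0$, the inner sum tends to $0$ by hypothesis, so each individual term tends to $\hat g(m)\cdot 0=0$. The remaining task is to justify passing the limit $\e\to0$ inside the infinite sum over $m\ne0$.

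The main obstacle is precisely this interchange of limit and infinite summation; a term-by-term limit is not automatically enough. The clean fix is dominated convergence for series: I would bound $\big|\e^N\sum_{\e j\in U}e(m\cdot v_j(\e))\big|\le \e^N\#\{j:\e j\in U\}\le C$ uniformly in $m$ and in $0<\e\ll1$, so that $\big|\hat g(m)\,\e^N\sum_{\e j\in U}e(m\cdot v_j(\e))\big|\le C|\hat g(m)|$, which is summable over $m$ by the $C^2$ decay bound. Then the discrete dominated convergence theorem lets me take $\e\to0$ termwise, yielding $\lim_{\e\to0}\e^N\sum_{\e j\in U}g(v_j(\e))=\hat g(0)|U|=|U|\int_{[0,1]^N}g(r)\dd r$, as claimed.

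One remark: the hypothesis is stated only for $m\in\BZ^N\setminus 0$, and that is all that is used — the $m=0$ contribution is handled separately by the counting estimate. If one wants to avoid any regularity assumption on $\partial U$ beyond what Assumption~\ref{ass:domains} already provides, I would simply invoke that $U$ is a bounded domain whose boundary has measure zero, which is implicit in the paper's setting, so that $\e^N\#\{j:\e j\in U\}\to|U|$ holds. This completes the proof of Theorem~\ref{thm:weil}.
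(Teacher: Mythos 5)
Your overall route---expand $g$ in its Fourier series, isolate the $m=0$ term via the lattice-point count $\e^N\#\{j:\e j\in U\}\to|U|$, kill each $m\ne 0$ term by the hypothesis, and interchange the $\e\to0$ limit with the sum over $m$---is the classical Weyl-criterion argument, and it is the same approach the paper has in mind (the paper only sketches it, citing Kuipers--Niederreiter). However, there is a genuine gap in the step you use to justify the interchange. From $g\in C^2$ you correctly get $|\hat g(m)|\le c(1+|m|)^{-2}$, but $\sum_{m\in\BZ^N\setminus 0}(1+|m|)^{-2}$ diverges for every $N\ge 2$ (the number of lattice points with $|m|\sim R$ grows like $R^{N-1}$), so this bound does not yield absolute convergence of the Fourier series, and your dominating series $C|\hat g(m)|$ is not summable as claimed. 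Moreover, for $N\ge 4$ a merely $C^2$ periodic function need not have an absolutely convergent Fourier series at all (absolute convergence essentially requires smoothness exceeding $N/2$), so for large codimension the dominated-convergence scheme cannot be rescued by a sharper coefficient estimate; recall the theorem is used for arbitrary $1\le N\le n-1$.

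Two standard repairs. For $N\le 3$, replace the pointwise decay bound by Cauchy--Schwarz with Parseval: $\sum_m|\hat g(m)|\le\bigl(\sum_m(1+|m|)^{-4}\bigr)^{1/2}\bigl(\sum_m(1+|m|)^{4}|\hat g(m)|^2\bigr)^{1/2}<\infty$, since $g\in C^2\subset H^2$ and $4>N$; then your argument goes through verbatim. For general $N$ (and assuming only that $g$ is continuous and $\BZ^N$-periodic), argue as in the cited reference: approximate $g$ uniformly by a trigonometric polynomial $P$ (e.g.\ Fej\'er means), observe that $\bigl|\e^N\sum_{\e j\in U}\bigl(g(v_j(\e))-P(v_j(\e))\bigr)\bigr|\le \|g-P\|_\infty\,\e^N\#\{j:\e j\in U\}\le C\|g-P\|_\infty$, apply the hypothesis and the lattice count to the finitely many frequencies of $P$, and let the approximation error tend to zero. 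Your handling of the $m=0$ term (requiring $\partial U$ to have measure zero so that $\e^N\#\{j:\e j\in U\}\to|U|$) is correct and is needed in either version; with the trigonometric-polynomial route the $C^2$ hypothesis is not even needed for this theorem.
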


Theorem~\ref{thm:weil} is well-known in the literature (see \cite[Theorems 6.1, 6.2]{KN_06}). Usually such a result is proven for a sequence of points $v_j$, $j\ge 1$. We modify the statement to allow for the points in the sequence,  $v_j(\e)$, to depend on $\e>0$. Nevertheless, the proof in this slightly more general case is the same and based on expanding $g$ in the Fourier series.

Let $\langle r \rangle$ denote the distance from $r\in\br$ to the nearest integer. We also denote $f_l^\prime:=\pa_{y_l}f(y)$, the partial derivative of $f$ with respect to the $l$-th coordinate of $y$.

\begin{lemma}\label{lem:udpts-e} Given a hyperrectangle $[a,b]\subset\br^N$, consider a real-valued $f\in C^2([a,b])$ such that 
\be\label{incl-e}
\langle f_l^\prime(y)\rangle \ge \de,\ y\in [a,b],
\ee
for some $\de>0$ and for some $l=l_0$, $1\le l_0\le N$. Then 
\be\label{wlcr-e}
I_\e:=\e^N\sum_{\e j\in[a,b]}e(f(\e j)/\e)=O(\e^{1/3}),\ \e\to0.
\ee
\end{lemma}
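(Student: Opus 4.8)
\textbf{Overall strategy.} The sum $I_\e=\e^N\sum_{\e j\in[a,b]}e(f(\e j)/\e)$ is a discrete oscillatory sum, so the natural tool is a van der Corput / Weyl-type bound. The idea is to reduce the $N$-dimensional sum to a one-dimensional one along the direction $l_0$ in which the derivative condition $\langle f_{l_0}'(y)\rangle\ge\de$ holds, and then to exploit the fact that consecutive phase increments in that direction differ by an amount bounded away from integers. Concretely, I would write $j=(j',j_{l_0})$, fix the transverse indices $j'$ (there are $O(\e^{-(N-1)})$ of them), and estimate each inner one-dimensional sum $S_{j'}:=\sum_{\e j_{l_0}: \e j\in[a,b]} e(f(\e j)/\e)$, then sum the $O(\e^{-(N-1)})$ bounds against the prefactor $\e^N$.

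\textbf{The one-dimensional estimate.} For a fixed transverse slice, set $\phi(t):=f(\dots,t,\dots)/\e$ as a function of the $l_0$-th coordinate $t$, sampled at $t=\e j_{l_0}$. The consecutive phase difference is
\be
\phi(\e(j_{l_0}+1))-\phi(\e j_{l_0})=\frac1\e\int_{\e j_{l_0}}^{\e(j_{l_0}+1)} f_{l_0}'(\dots,s,\dots)\,\dd s = f_{l_0}'(\dots,\e j_{l_0},\dots)+O(\e),
\ee
using $f\in C^2$. By \eqref{incl-e}, the main term has distance $\ge\de$ to the integers, so for $\e$ small enough the full difference has distance $\ge\de/2$ to the integers; hence each summand $e(\phi(\e j_{l_0}))$ advances by a phase $e(\theta_{j_{l_0}})$ with $\langle\theta_{j_{l_0}}\rangle\ge\de/2$. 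A standard trick (partial summation / geometric-series comparison, or a direct van der Corput first-derivative bound for sums) then gives $|S_{j'}|\le c/\de$, uniformly in $\e$ and in $j'$. Combining, $|I_\e|\le \e^N\cdot O(\e^{-(N-1)})\cdot O(1/\de)=O(\e)$, which is even better than the claimed $O(\e^{1/3})$; the weaker exponent in the statement presumably leaves room for a cruder argument or for degradation when \eqref{incl-e} is only available after splitting $[a,b]$ into subregions, so I would not worry about optimizing.

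\textbf{The main obstacle.} The delicate point is the boundary of the sampling: the set $\{j_{l_0}: \e j\in[a,b]\}$ is an interval of integers whose endpoints depend on $\e$ and on the transverse slice, and the second-derivative error $O(\e)$ in the phase-difference computation must be controlled uniformly — this is where $f\in C^2([a,b])$ (rather than merely $C^1$) is essential, and where the condition \eqref{incl-e} must be assumed to hold on all of $[a,b]$ so that the comparison works at every step regardless of where the summation starts and ends. A second, more cosmetic point is making the ``geometric-series comparison'' rigorous: one writes $\sum_j e(\phi(\e j))$, notes $e(\phi(\e(j+1)))=e(\theta_j)e(\phi(\e j))$ with $|e(\theta_j)-\bar\zeta|$ uniformly small for a fixed $\zeta$ on a dyadic subinterval where $\theta_j$ varies little (again by $C^2$), and telescopes; the total number of such subintervals is $O(1)$, each contributing $O(1/\de)$. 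I expect the bookkeeping of these subintervals and the uniformity in the transverse slice $j'$ to be the only real work; the arithmetic of the van der Corput bound itself is routine.
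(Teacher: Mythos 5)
Your reduction to one-dimensional sums along the $l_0$-direction is sound, but the key estimate $|S_{j'}|\le c/\de$ is not justified by what you write, and this is where the substance of the lemma lies. The pointwise condition $\langle\theta_j\rangle\ge\de/2$ on the increments $\theta_j:=\phi(\e(j_{l_0}+1))-\phi(\e j_{l_0})$ does not by itself force any cancellation: if the increments could alternate between $+\de/2$ and $-\de/2$ modulo $1$, the partial sums would be of size comparable to the number of terms, i.e. $\sim\e^{-1}$. The Kusmin--Landau / van der Corput first-derivative bound you invoke needs, besides $\langle\phi'\rangle\ge\de$, that the increment sequence be monotone or of controlled total variation (equivalently, confined near a single band $[m+\de,m+1-\de]$); in the present setting this extra input is available, since $f\in C^2$ gives $\sum_j|\theta_{j+1}-\theta_j|=O(1)$ along each slice (and continuity plus connectedness of $[a,b]$ pins $f_{l_0}'$ in a single band), but you never bring it in. Without it, the assertion ``a standard trick then gives $|S_{j'}|\le c/\de$'' is a gap, and the claimed improvement $I_\e=O(\e)$ rests entirely on it.

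Your fallback accounting does not close the gap either: splitting a slice of $\sim\e^{-1}$ terms into $O(1)$ subintervals means subintervals of length $L\sim\e^{-1}$, on which $\theta_j$ drifts by $O(L\e)=O(1)$, so comparing $\sum_j e(\phi_j)$ with a geometric series of fixed ratio accumulates a phase error of total size $O(L\cdot L\e)=O(\e^{-1})$, not $O(1/\de)$. The correct telescoping bound per subinterval is $O((L^2\e+1)/\de)$; balancing at $L\sim\e^{-1/2}$ gives $|S_{j'}|=O(\e^{-1/2}/\de)$ and hence $I_\e=O(\e^{1/2})$ -- which would still prove the lemma, but only after this repair, and it is weaker than the $O(\e)$ you claim (that would require the full Abel-summation/Kusmin--Landau argument exploiting the bounded variation of the $\theta_j$). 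Note that the paper sidesteps one-dimensional exponential-sum machinery altogether: it partitions $[a,b]$ into hypercubes of side $\e^{2/3}$, linearizes the phase on each cube (Taylor error $O(\e^{4/3})$, i.e. $O(\e^{1/3})$ after division by $\e$), and bounds the resulting exact geometric sums by $\min\big(J,(2\langle f_{l_0}'(y_k)\rangle)^{-1}\big)$; that choice of scale is precisely where the exponent $1/3$ comes from.
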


\begin{proof} 
Partition $[a,b]$ into hypercubes with side length $\e^{2/3}$:
\be\label{intervs-e}\bs
&\Delta_k:=[y_k,y_{k+\vec 1}],\ y_k:=a+k\e^{2/3},\\ 
&k=(k_1,\dots,k_N),\ 0\le k_l<(b_l-a_l)\e^{-2/3},
\end{split}
\ee
where $\vec 1:=(1,\dots,1)\in\br^N$. 
To clarify, $y_k\in\br^N$ are points, and the $l$-th coordinate of $y_k$ is denoted $y_{k,l}$.
First, we show that
\be\label{frst-est-e}\begin{split}
I_\e(k):&=\e^{N/3}\bigg|\sum_{\e j\in \Delta_k}e\left(\frac{f(\e j)}\e\right)
-\sum_{\e j\in \Delta_k}e\left(\frac{f(y_k)+ f^\prime(y_k)(\e j-y_k)}\e\right)\bigg|
\to 0,\\ 
\e&\to0,\ \Delta_k\subset [a,b].
\end{split}
\ee
Since
\be\label{taylor2-e}
f(\e j)=f(y_k)+ f^\prime(y_k)(\e j-y_k)+O(\e^{4/3}),\ \e j\in\Delta_k\subset [a,b],
\ee
it follows from \eqref{frst-est-e}
\be\label{Ismall-e}
I_\e(k)\le \e^{N/3}\sum_{\e j\in \Delta_k}\left|e\big(O\big(\e^{1/3}\big)\big)-1 \right|=
O\big(\e^{1/3}\big),\ \Delta_k\subset [a,b].
\ee
The big-$O$ terms in \eqref{taylor2-e}, \eqref{Ismall-e} are uniform in $k$ since $\max_{y\in[a,b]}\Vert \pa_y ^2 f(y)\Vert<\infty$. Here $\Vert\cdot\Vert$ denotes any matrix norm.

To prove \eqref{wlcr-e}, partition the sum and use \eqref{frst-est-e}, \eqref{Ismall-e} to obtain the estimate:
\be\label{Jest1-e}\begin{split}
|I_\e|&\le \e^N \sum_k\bigg|\sum_{\e j\in \Delta_k}e\left(\frac{f(y_k)+f^\prime(y_k)(\e j-y_k)}\e\right)\bigg|+O\big(\e^{1/3}\big)\\
&= \e^{2N/3} \sum_k\prod_{l=1}^N \bigg|\e^{1/3}\sum_{\e j_l\in [y_{k,l},y_{k+1,l}]} e\big(f_l^\prime(y_k)j_l\big)\bigg|+O\big(\e^{1/3}\big).
\end{split}
\ee
Here and below
\be\label{sum k}
\sum_k:=\sum_{k_1=0}^{(b_1-a_1)\e^{-2/3}-1}\dots \sum_{k_N=0}^{(b_N-a_N)\e^{-2/3}-1}.
\ee

Next, we use the inequality
\be\label{keyineq-e}
\bigg| \sum_{n=1}^J e(r n)\bigg|\le \min\big(J,(2\langle r \rangle)^{-1}\big).
\ee
 By \eqref{incl-e}, 
$\langle f_l^\prime(y_k)\rangle\ge \de$, $l=l_0$.
Using \eqref{keyineq-e} in \eqref{Jest1-e} with
\be\label{consts-e}
r=f_l^\prime(y_k),\ J=O(\e^{-1/3}),\ l=l_0,
\ee
gives
\be\label{midest-e}
\e^{1/3}\bigg|\sum_{\e j_l\in [y_{k,l},y_{k+1,l}]} e\big(f_l^\prime(y_k)j_l\big)\bigg|\le c \min\big(1,\e^{1/3}/(2\de)\big),\ l=l_0.
\ee
Recall that $y_{k,l}$ is the $l$-th coordinate of $y_k$.
Multiply both sides of \eqref{midest-e} with all the remaining factors ($l\not=l_0$) in \eqref{Jest1-e}, use that all these factors are bounded, sum the resulting inequalities over $k$ as in \eqref{sum k}, and multiply by $\e^{2N/3}$. This gives the same upper bound as in \eqref{midest-e}. Hence $I_\e=O(\e^{1/3})$ and the lemma is proven.
\end{proof}

\begin{corollary}\label{cor:udpts-v2-e} Given a bounded domain $U\subset\br^N$, pick a real-valued $f\in C^2(U)$. Suppose for every $u\in\BZ^N$ and every $\de>0$ sufficiently small, 
there exist $p>0$ and a finite collection of cubes $\mathcal B_l\subset\br^n$ such that
\be\label{cor_cond1_2}\begin{split}
&\{y\in U:\, |f^\prime(y)-u|\le p\}\subset \cup_l\mathcal B_l,\ 
\sum_l\text{Vol}(\mathcal B_l)\le\de.
\end{split}
\ee
Then 
\be\label{wlcr-e-v2}
\e^N\sum_{\e j\in U}e(f(\e j)/\e)\to 0,\ \e\to0.
\ee
\end{corollary}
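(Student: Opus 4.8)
The plan is to fix a tolerance $\de>0$, isolate a ``bad'' set of $y$'s of volume $\le\de$ on which $f^\prime(y)$ is close to $\BZ^N$, cover the complementary ``good'' set by finitely many small cubes on each of which some coordinate of $f^\prime$ stays a fixed distance from the integers, and then estimate the Weyl sum piece by piece, invoking Lemma~\ref{lem:udpts-e} on the good cubes.

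\textbf{Using the hypothesis.} Recall that $f$ has bounded first and second derivatives on $U$ (as in the applications, where $f=\xi\cdot\Psi(x_0,\cdot)$ with $\Psi$ smooth on a compact set), so only finitely many $u\in\BZ^N$, say $u\in\mathcal U$, lie within distance $1$ of $\overline{\nabla f(U)}$. Applying \eqref{cor_cond1_2} to each $u\in\mathcal U$ with $\de$ replaced by $\de/|\mathcal U|$ and unioning the resulting cube families, I obtain a finite family $\mathcal B$ of open cubes with $\sum_{B\in\mathcal B}\text{Vol}(B)\le\de$ and a number $p:=\min_{u\in\mathcal U}p_u\in(0,1/2]$ such that for every $y\in U\setminus\bigcup\mathcal B$ one has $|f^\prime(y)-u|_\infty>p$ for all $u\in\BZ^N$; choosing $u$ to be the nearest-integer vector to $f^\prime(y)$, this says $\langle f_{l_0}^\prime(y)\rangle>p$ for some coordinate $l_0=l_0(y)$.

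\textbf{Tiling and the three contributions.} Let $C:=\sup_U\Vert\pa_y^2f\Vert$, choose $\de_0>0$ with $|\{x:\text{dist}(x,\pa U)\le2\de_0\}|\le\de$ (possible because $U$ is Jordan measurable, as in our applications), and fix a cube side $\ell<\min(\de_0/\sqrt N,\,p/(4C\sqrt N))$. Tile $\br^N$ by the disjoint half-open cubes $C_\alpha=\ell\alpha+[0,\ell)^N$, $\alpha\in\BZ^N$, and split the finitely many $\alpha$ with $C_\alpha\cap U\ne\emptyset$ into three classes. (i) Cubes not meeting $\{y\in U:\text{dist}(y,\pa U)>\de_0\}$: their points $\e j\in U$ all lie within $\de_0$ of $\pa U$, so they contribute at most $\e^N\#\{\e j:\text{dist}(\e j,\pa U)\le\de_0\}\le|\{x:\text{dist}(x,\pa U)\le 2\de_0\}|\le\de$ once $\e$ is small. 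For the remaining $\alpha$, $\text{diam}\,C_\alpha=\sqrt N\ell<\de_0$ forces $\bar C_\alpha\subset U$. (ii) Those with $C_\alpha\subset\bigcup\mathcal B$: since the $C_\alpha$ are disjoint, they contribute at most $\e^N\#\{\e j\in\bigcup\mathcal B\}\le\text{Vol}(\bigcup\mathcal B)+o(1)\le\de+o(1)$. (iii) All others: here $C_\alpha$ contains a point $y^*\in U\setminus\bigcup\mathcal B$, hence a coordinate $l_0(\alpha)$ with $\langle f_{l_0(\alpha)}^\prime(y^*)\rangle>p$; since $\langle\cdot\rangle$ is $1$-Lipschitz and $\Vert\pa_y^2f\Vert\le C$ on $\bar C_\alpha$ while $\text{diam}\,C_\alpha<p/(4C)$, in fact $\langle f_{l_0(\alpha)}^\prime(y)\rangle>p/2$ throughout $\bar C_\alpha$. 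Lemma~\ref{lem:udpts-e}, applied to $[a,b]=\bar C_\alpha$ with $\de:=p/2$, then gives $\e^N\sum_{\e j\in\bar C_\alpha}e(f(\e j)/\e)=O(\e^{1/3})$ with constant uniform in $\alpha$ (it depends only on $p$, $C$, $\ell$ and $\text{diam}\,U$); summing over the finitely many class-(iii) cubes, and absorbing the $O(\e)$ discrepancy from the closed faces of $\bar C_\alpha$ not in $C_\alpha$, this class contributes $O(\e^{1/3})$.

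\textbf{Conclusion and the main difficulty.} Adding the three bounds gives $\big|\e^N\sum_{\e j\in U}e(f(\e j)/\e)\big|\le 3\de+O(\e^{1/3})+o(1)$, so $\limsup_{\e\to0}\big|\e^N\sum_{\e j\in U}e(f(\e j)/\e)\big|\le3\de$; since $\de>0$ is arbitrary, \eqref{wlcr-e-v2} follows. The step I expect to require the most care is the bookkeeping of the three scales $\de\gg\ell\gg\e$: the cube side $\ell$ must be chosen small in terms of $p$ (hence of $\de$) for the ``good'' cubes to be usable via Lemma~\ref{lem:udpts-e}, yet $\ell$ is frozen before letting $\e\to0$, so that lemma is applied only to a finite, $\de$-dependent collection of cubes; it is precisely the hypothesis \eqref{cor_cond1_2} that guarantees the leftover ``bad'' cubes can be swept into a set of volume $\le\de$, and some attention is also needed to ensure the boundary layer of $U$ carries negligible mass.
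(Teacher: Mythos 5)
Your proof is correct and takes essentially the same route as the paper: use the hypothesis \eqref{cor_cond1_2} to discard a bad set of volume at most $\de$, cover the remaining part of $U$ by finitely many small cubes on which Lemma~\ref{lem:udpts-e} applies, and bound the leftover lattice points by the (small) volume they occupy, letting $\de\to0$ at the end. The only differences are presentational: you tile at a fixed scale $\ell$ chosen small relative to $p/C$ so that a single coordinate $l_0$ with $\langle f_{l_0}^\prime(y)\rangle\ge p/2$ works on each good cube (a point the paper treats more tersely when it inner-approximates $U(\de)$ by cubes), and you make explicit the boundary-layer and lattice-point-counting estimates that the paper leaves implicit.
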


\begin{proof} 
Pick $\de>0$ arbitrarily small. Since the set $f^\prime(y)$, $y\in U$, is bounded, there are finitely many $u\in\mathbb Z^N\cap f^\prime(U)$. Therefore, by the assumptions of the corollary, there exist $p>0$ and a finite collection of cubes $\mathcal B_l$ such that
\be\label{cond1 conseq}\begin{split}
&\big\{y\in U:\, |f^\prime(y)-u \big|\le p \text{ for some }u\in \BZ^N\big\}\subset \cup_l\mathcal B_l,\quad
\sum_l\text{Vol}(\mathcal B_l)\le\de.
\end{split}
\ee
Denote $U(\de):=U\setminus \cup_l\mathcal B_l$. By construction, 
\be\label{Vde1}
U(\de)\subset\{y\in U:\, |f^\prime(y)-u|>p,\ \forall u\in\mathbb Z^N\}. 
\ee
Approximate $U(\de)$ by a union of finitely many pairwise nonintersecting cubes $\Delta_l$: 
$$
\cup_l \Delta_l \subset U(\de),\ \text{Vol}(U(\de)\setminus \cup_l \Delta_l)<\de. 
$$
By construction, $f(y)$ satisfies the assumptions of Lemma~\ref{lem:udpts-e} on each of the $\Delta_l$. This follows from \eqref{Vde1} and because $|f^\prime(y)-u|>p$ implies that $|\pa_{y_\iota}f(y)-u_l|>p/N^{1/2}$ for at least one $\iota=1,2,\dots,N$. By Lemma~\ref{lem:udpts-e}, 
\be
\e^N\sum_{\e j\in \Delta_l} e(f(\e j)/\e)\to 0,\ \e\to0,
\ee
for each $\Delta_l$, and there are finitely many of them. By choosing $\de>0$ small, $\cup_l \Delta_l$ can be as close to $U$ as we like, and the desired assertion follows.
\end{proof}

In what follows we apply Corollary~\ref{cor:udpts-v2-e} with $N$ replaced by $n-N$ (i.e., $U\subset\br^{n-N}$) and $\e$ replaced by $\e_y$.

\begin{corollary}\label{cor:udpts-2d-e} Fix a bounded domain $U\subset\br^{n-N}$ and a $C^2$ function $\Psi:U\to \br^N$. Let $g\in C^2(\br^N)$ be a function, which satisfies $g(r)=g(r+m)$ for any $r\in\br^N$ and $m\in\BZ^N$. Suppose the function $m\cdot \Psi(y):U\to \br$ satisfies the assumptions of Corollary~\ref{cor:udpts-v2-e} for any $m\in\mathbb Z^N\setminus0$. Then
\be
\lim_{\e_y\to 0}\e_y^{n-N} \sum_{\e_y j\in U}g(\Psi(\e_y j)/\e_y)=\text{Vol}(U)\int_{[0,1]^N}g(r)\dd r.
\ee
\end{corollary}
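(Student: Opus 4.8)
The plan is to obtain the statement as an immediate consequence of the equidistribution criterion Theorem~\ref{thm:weil}, with the Weyl-sum hypothesis of that theorem supplied by Corollary~\ref{cor:udpts-v2-e}. First I would introduce the points
\[
v_j(\e_y):=\Psi(\e_y j)/\e_y\in\br^N,\qquad j\in\BZ^{n-N},\ \e_y j\in U,
\]
so that the quantity to be analyzed is exactly $\e_y^{n-N}\sum_{\e_y j\in U}g(v_j(\e_y))$. Applying Theorem~\ref{thm:weil} (with the base index living in $\br^{n-N}$, the periodicity lattice $\BZ^N$ and the unit cell $[0,1]^N$, and with $\e$ replaced by $\e_y$) then reduces everything to verifying that for every $m\in\BZ^N\setminus0$
\[
\lim_{\e_y\to0}\e_y^{n-N}\sum_{\e_y j\in U}e\big(m\cdot v_j(\e_y)\big)=0.
\]

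Second, I would rewrite $m\cdot v_j(\e_y)=(m\cdot\Psi)(\e_y j)/\e_y$, so that with $f:=m\cdot\Psi$ the sum above becomes $\e_y^{n-N}\sum_{\e_y j\in U}e\big(f(\e_y j)/\e_y\big)$. Since $\Psi\in C^2(U)$ we have $f\in C^2(U)$, and by hypothesis $f=m\cdot\Psi$ satisfies the assumptions of Corollary~\ref{cor:udpts-v2-e} for every $m\in\BZ^N\setminus0$. Invoking that corollary with $N$ replaced by $n-N$ and $\e$ replaced by $\e_y$ (as noted just before the present statement) yields the required limit, i.e. the Weyl-type condition above holds.

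Third, I would feed this back into Theorem~\ref{thm:weil}, which applies because $g\in C^2(\br^N)$ is $\BZ^N$-periodic, to conclude
\[
\lim_{\e_y\to0}\e_y^{n-N}\sum_{\e_y j\in U}g(v_j(\e_y))=\text{Vol}(U)\int_{[0,1]^N}g(r)\,\dd r,
\]
which is exactly the asserted identity.

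The argument is essentially bookkeeping, and I do not expect any genuine obstacle: the substantive work has already been carried out in Lemma~\ref{lem:udpts-e} (the van der Corput/stationary-phase estimate controlling the exponential sums) and in Theorem~\ref{thm:weil} (the Weyl criterion). The only point requiring care is the dimensional matching---the ``spatial'' index $j$ ranges over $\br^{n-N}$ while the torus $[0,1]^N$, the periodicity lattice $\BZ^N$, and the frequencies $m$ live in $\br^N$---together with the observation that the hypothesis imposed on $m\cdot\Psi$ is precisely the one needed to apply Corollary~\ref{cor:udpts-v2-e}.
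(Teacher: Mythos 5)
Your proposal is correct and follows exactly the paper's own argument: set $f=m\cdot\Psi$ and invoke Corollary~\ref{cor:udpts-v2-e} (with $N$ replaced by $n-N$ and $\e$ by $\e_y$) to verify the Weyl-sum hypothesis, then apply Theorem~\ref{thm:weil} with $v_j(\e_y)=\Psi(\e_y j)/\e_y$. No gaps; the bookkeeping on dimensions is handled just as in the paper.
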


\begin{proof} By setting $f(y):=m\cdot \Psi(y):U\to \br$, Corollary~\ref{cor:udpts-v2-e} implies  
that
\be
\lim_{\e_y\to 0}\e_y^{n-N} \sum_{\e_y j\in U}e(m\cdot \Psi(\e_y j)/\e_y)=0,\ \forall m\in\BZ^N\setminus0.
\ee
The desired result follows from Theorem~\ref{thm:weil} by setting $v_j(\e_y)=\Psi(\e_y j)/\e_y$.
\end{proof}

\subsection{Step II: computing $\lim_{\e\to0}D_\e$}\label{ssec:limDe}
Fix some $0<\de\ll 1$ and partition $\CY$ into $L(\de)$ domains $U_l$, $l=1,\dots,L(\de)$, such that the diameter of each domain does not exceed $\de$. Pick any $\tilde y_l\in U_l$ for each $l$. Clearly,
\begin{equation}\label{Deps 2}
D_\e=\sum_{l=1}^{L(\de)}\text{Vol}(U_l)\bigg[\frac{\e_y^{n-N}}{\text{Vol}(U_l)}\sum_{y_j\in U_l} g(\tilde y_l,\Psi(y_j)/\e_y)\bigg]+O(\de).
\end{equation}
Applying Corollary~\ref{cor:udpts-2d-e} to each expression in brackets (i.e., with $U$ replaced by $U_l$) we obtain
\begin{equation}\label{Deps 2 lim}
\lim_{\e\to0}D_\e=\sum_{l=1}^{L(\de)}\text{Vol}(U_l)\int_{[0,1]^{N}} g(\tilde y_l,r)\dd r+O(\de).
\end{equation}
By Lemma~\ref{lem:generic cond}, Corollary~\ref{cor:udpts-2d-e} does apply. Indeed, for Corollary~\ref{cor:udpts-2d-e} to apply, the main property is that for any $m\in\BZ^N$ and $0<\de\ll1$ the set
\be\label{req prop Psi}\begin{split}
\{y\in U_l:\, |\pa_y(m\cdot \Psi(y))-u|\le p\}
\end{split}
\ee
is contained in a union of finitely many cubes with the total volume not exceeding $\de$. Lemma~\ref{lem:generic cond} proves a slightly stronger property for all of $\CY$, where instead of a vector $m\in\BZ^N\setminus0$ we have $\xi\in\br^N\setminus0$. 

Since $\de>0$ can be arbitrarily small, the proof is finished.

\section{Outline of proof of Lemma~\ref{lem:Lyapunov_nd}}\label{sec:prf outline}
The proof of Lemma~\ref{lem:Lyapunov_nd} is similar to that of Lemma~\ref{lem:Lyapunov}, so here we only highlight key new points. 

The denominator in \eqref{main lim Lnd} is $D_\e^{3/2}$, where (cf. \eqref{recon dttpr 1} for the definitions of $b_j^{(l)}$ and $R_{j,k}^{(l)}$)
\begin{equation}\label{recon dttpr 2}
\begin{split}
D_\e=&\e_y^{n-N}\sum_{(y_j,z_k)\in\vs} \biggl[\sum_{l=1}^L \theta_l \big(G\big(y_j,k-b_j^{(l)}\big)+R_{j,k}^{(l)}\big)\biggr]^2\sigma^2(y_j,\Psi(y_j))\\
&\hspace{1cm}+O(\e^a),\ a=\min(1,2\ga+N).
\end{split}
\end{equation} 
Here we have used the usual substitution $z_k\approxeq \Psi(y_j)$ (cf. \eqref{Deps}, \eqref{sigma mod}) and \eqref{noise var}.
Estimating the contribution of the remainder $R_{j,k}^{(l)}$ similarly to \eqref{sum R bnd} and passing to the limit as in \eqref{lim De} gives
\begin{equation}\label{recon dttpr 3}
\begin{split}
\lim_{\e\to0}D_\e=&\int_{\CY}\int_{\br^{N}} f^2(r,y)\dd r\, \sigma^2(y,\Psi(y))\dd y,\\
f(r,y):=&\sum_{l=1}^L  \theta_l G\big(y,r-\pa_x\Psi(y)\cdot\chx_l\big).
\end{split}
\end{equation} 
Multiplying out the sum in the definition of $f$, we can write 
\begin{equation}\label{recon dttpr 4}
\begin{split}
\lim_{\e\to0}D_\e=&\sum_{l_1=1}^L\sum_{l_2=1}^L  \theta_{l_1}\theta_{l_2}C(\chx_{l_1}-\chx_{l_2}),\\
C(\vartheta):=&\int_{\CY}(G\star G)\big(y,\pa_x\Psi(y)\cdot \vartheta\big)\sigma^2(y,\Psi(y))\dd y,\\
(G\star G)(y,\vartheta):=&\int_{\br^N}G(y,\vartheta+r)G(y,r)\dd r.
\end{split}
\end{equation} 
Observe that if $L=1$, $\theta_1=1$, and $\vartheta=0$, then \eqref{recon dttpr 4} coincides with \eqref{lim De}, \eqref{lim psi int}. By \eqref{bFG},
\begin{equation}\label{f alt}
\begin{split}
f(r,y)=(2\pi)^{-N}\int_{\br^N} a_2(y,\hat\mu)\tilde\ik(\hat\mu) \bigg[\sum_{l=1}^L  \theta_l e^{-i\hat\mu\cdot \pa_x\Psi(y)\chx_l}\bigg]e^{i\hat\mu\cdot r}\dd \hat\mu.
\end{split}
\end{equation} 

Suppose the limit in \eqref{recon dttpr 3} equals zero. By Assumption~\ref{ass:x0}(2), this implies that $f(r,y)\equiv0$ for all $r\in\br^N$ and $y\in V$. The set $V$ is introduced in Assumption~\ref{ass:x0}(2). Also, $\ik$ is compactly supported, so $\tilde\ik$ is analytic and cannot be zero on an open set. Therefore
\begin{equation}\label{zero expon}
\begin{split}
\sum_{l=1}^L  \theta_l e^{-i\hat\mu\cdot \pa_x\Psi(y)\chx_l}\equiv 0,\ \hat\mu\in\br^N,\ y\in V.
\end{split}
\end{equation} 

Consider the vectors $e_p:=\chx_{l_1}-\chx_{l_2}\in\br^n$, $1\le l_1,l_2\le L$, $l_1\not=l_2$, and the sets $V_p:=\{y\in V: \pa_x\Psi(y) e_p=0\}$. Recall that the first argument of $\Psi$ is $x=x_0$, which we omitted for simplicity. The $\chx_l$ are distinct, so $e_p\not=0$ for each $p$. By \eqref{Bolker},  each set $V_p$ has measure zero. There are finitely many $V_p$, so their union is not all of $V$. Therefore there exists $y_0\in V\setminus(\cup V_p)$ such that $\pa_x\Psi(y_0) e_p\not=0$ for each $p$. Since the union of any finite number of the proper subspaces $(\pa_x\Psi(y_0) e_p)^\perp\subset\br^N$ cannot be all of $\br^N$, there exists $\hat\mu\in\br^N$ such that $\hat\mu\cdot\pa_x\Psi(y_0) e_p\not=0$ for any $p$.

In summary, there exist $y_0\in V$ and $\hat\mu$ such that $s_l:=\hat\mu\cdot \pa_x\Psi(y_0)\chx_l$, $1\le l\le L$, are pairwise distinct. Replace $\hat\mu$ with $\la\hat\mu$ in \eqref{zero expon}, differentiate with respect to $\la$ multiple times, and set $\la=0$. This gives
\begin{equation}\label{equations}
\sum_{l=1}^L  \theta_l s_l^p=0,\ p=0,1,\dots,L-1.
\end{equation} 
Since all $s_l$ are pairwise distinct, using the properties of the Vandermond determinant we conclude from \eqref{zero expon} and \eqref{equations} that all $\theta_l=0$. This contradiction proves that the limit in \eqref{recon dttpr 3} is not zero unless $\vec\theta=0$. 


An argument to show that the limit of the numerator in \eqref{main lim Lnd} is bounded is very similar to \eqref{sum G R}, \eqref{A.7}.

\bibliographystyle{abbrv}
\bibliography{My_Collection, refs}

\begin{thebibliography}{10}

\bibitem{abels12}
H.~Abels.
\newblock {\em {Pseudodifferential and Singular Integral Operators: An
  Introduction with Applications}}.
\newblock De Gruyter, Berlin/Boston, 2012.

\bibitem{AKW2024}
A.~Abhishek, A.~Katsevich, and J.~W. Webber.
\newblock {Local reconstruction analysis of inverting the Radon transform in
  the plane from noisy discrete data}.
\newblock {\em arXiv}, 2403.12909:1--23 (submitted), 2024.

\bibitem{AdlerGeomRF2010}
R.~J. Adler.
\newblock {\em {The Geometry of Random Fields}}.
\newblock Society for Industrial and Applied Mathematics, Philadelphia, PA,
  2010.

\bibitem{ath_book}
K.~B. Athreya and S.~N. Lahiri.
\newblock {\em {Measure theory and probability theory}}.
\newblock Springer, New York, NY, 2006.

\bibitem{bss18}
S.~Berg, N.~Saxena, M.~Shaik, and C.~Pradhan.
\newblock {Generation of ground truth images to validate micro-CT
  image-processing pipelines}.
\newblock {\em The Leading Edge}, 37(6):412--420, 2018.

\bibitem{Bissantz2014}
N.~Bissantz, H.~Holzmann, and K.~Proksch.
\newblock {Confidence regions for images observed under the Radon transform}.
\newblock {\em Journal of Multivariate Analysis}, 128:86--107, 2014.

\bibitem{Cavalier_98}
L.~Cavalier.
\newblock Asymptotically efficient estimation in a problem related to
  tomography.
\newblock {\em Math. Methods Statist.}, 7(4):445--456 (1999), 1998.

\bibitem{Cavalier_00}
L.~Cavalier.
\newblock Efficient estimation of a density in a problem of tomography.
\newblock {\em Ann. Statist.}, 28(2):630--647, 2000.

\bibitem{Chhatre2017}
S.~S. Chhatre, H.~Sahoo, S.~Leonardi, K.~Vidal, E.~M. Braun, and P.~Patel.
\newblock {A Blind Study of Four Digital Rock Physics Vendor Labs on Porosity,
  Absolute Permeability, and Primary Drainage on Tight Outcrops}.
\newblock {\em International Symposium of the Society of Core Analysts held in
  Vienna, Austria}, pages 1--12, 2017.

\bibitem{dhara2016combination}
A.~{Dhara, Ashis Kumar Mukhopadhyay, Sudipta Dutta}, M.~Garg, and
  N.~Khandelwal.
\newblock {A combination of shape and texture features for classification of
  pulmonary nodules in lung {CT} images}.
\newblock {\em Journal of digital imaging}, 29:466----475, 2016.

\bibitem{dhara2016differential}
P.~{Dhara, Ashis Kumar Mukhopadhyay, Sudipta Saha}, M.~Garg, and N.~Khandelwal.
\newblock {Differential geometry-based techniques for characterization of
  boundary roughness of pulmonary nodules in {CT} images}.
\newblock {\em International journal of computer assisted radiology and
  surgery}, 11:337----349, 2016.

\bibitem{Divel2020}
S.~E. Divel and N.~J. Pelc.
\newblock {Accurate Image Domain Noise Insertion in CT Images}.
\newblock {\em IEEE Transactions on Medical Imaging}, 39(6):1906--1916, 2020.

\bibitem{Falc2014}
K.~J. Falconer.
\newblock {\em {Fractal geometry: mathematical foundations and applications}}.
\newblock Wiley, third edition, 2014.

\bibitem{far04}
A.~Faridani.
\newblock {Sampling theory and parallel-beam tomography}.
\newblock In {\em Sampling, wavelets, and tomography}, volume~63 of {\em
  Applied and Numerical Harmonic Analysis}, pages 225--254. Birkhauser Boston,
  Boston, MA, 2004.

\bibitem{fbh01}
A.~Faridani, K.~Buglione, P.~Huabsomboon, O.~Iancu, and J.~McGrath.
\newblock {Introduction to local tomography.}
\newblock In {\em Radon transforms and tomography. Contemp. Math., 278}, pages
  29--47. Amer. Math. Soc, 2001.

\bibitem{GilbTrud}
D.~Gilbarg and N.~S. Trudinger.
\newblock {\em {Elliptic Partial Differential Equations of Second Order}}.
\newblock Springer, Berlin, Heidelberg, 2001.

\bibitem{GuillPoll74}
V.~Guillemin and A.~Pollack.
\newblock {\em {Differential Topology}}.
\newblock Prentice-Hall, Inc., Englewood Cliffs, NJ, 1974.

\bibitem{hl12}
B.~Hahn and A.~K. Louis.
\newblock {Reconstruction in the three-dimensional parallel scanning geometry
  with application in synchrotron-based x-ray tomography}.
\newblock {\em Inverse Problems}, 28, 2012.

\bibitem{hor}
L.~Hormander.
\newblock {\em {The Analysis of Linear Partial Differential Operators I.
  Distribution Theory and Fourier Analysis.}}
\newblock Springer-Verlag, Berlin, 2003.

\bibitem{Katsevich_2024_BV}
A.~Katsevich.
\newblock {Analysis of tomographic reconstruction of objects in $\mathbb R^2$
  with rough edges}.
\newblock {\em arXiv}, (2312.08259 [math.NA]), 2023.

\bibitem{Katsevich2022a}
A.~Katsevich.
\newblock {Novel resolution analysis for the Radon transform in $\mathbb R^2$
  for functions with rough edges}.
\newblock {\em SIAM Journal of Mathematical Analysis}, 55:4255--4296, 2023.

\bibitem{Katsevich2021a}
A.~Katsevich.
\newblock {Resolution analysis of inverting the generalized $N$-dimensional
  Radon transform in $\mathbb R^n$ from discrete data}.
\newblock {\em Journal of Fourier Analysis and Applications}, 29, art. 6, 2023.

\bibitem{Katsevich2023a}
A.~Katsevich.
\newblock {Resolution of 2D reconstruction of functions with nonsmooth edges
  from discrete Radon transform data}.
\newblock {\em SIAM Journal on Applied Mathematics}, 83(2):695--724, 2023.

\bibitem{Katsevich_aliasing_2023}
A.~Katsevich.
\newblock {Analysis of view aliasing for the generalized Radon transform in
  $\mathbb R^2$}.
\newblock {\em SIAM Journal on Imaging Sciences}, 17(1):415----440, 2024.

\bibitem{Khoshnevisan2002}
D.~Khoshnevisan.
\newblock {\em {Multiparameter Processes. An Introduction to Random Fields}}.
\newblock Springer-Verlag, New York, 2002.

\bibitem{Korostelev1991}
A.~P. Korostelev and A.~B. Tsybakov.
\newblock {Optimal rates of convergence of estimates in the stochastic problem
  of computerized tomography}.
\newblock {\em Problems Inform. Transmission}, 27(1):73--81, 1991.

\bibitem{Tsybakov_92}
A.~P. Korostelev and A.~B. Tsybakov.
\newblock {Asymptotically minimax image reconstruction problems}.
\newblock In {\em Topics in nonparametric estimation}, pages 45----86. Amer.
  Math. Soc., Providence, RI, 1992.

\bibitem{IFT_Krantz_2013}
S.~G. Krantz and H.~R. Parks.
\newblock {\em {The Implicit Function Theorem: History, Theory, and
  Applications}}.
\newblock 2013.

\bibitem{KN_06}
L.~Kuipers and H.~Niederreiter.
\newblock {\em {Uniform Distribution of Sequences}}.
\newblock Dover Publications, Inc., Mineola, NY, 2006.

\bibitem{louis16}
A.~K. Louis.
\newblock {Exact cone beam reconstruction formulae for functions and their
  gradients for spherical and flat detectors}.
\newblock {\em Inverse Problems}, 32, 2016.

\bibitem{luma}
A.~K. Louis and P.~Maass.
\newblock {Contour reconstruction in {3-D X-ray CT}}.
\newblock {\em IEEE Transactions on Medical Imaging}, 12:764--769, 1993.

\bibitem{Monard_19}
F.~Monard, R.~Nickl, and G.~P. Paternain.
\newblock Efficient nonparametric {B}ayesian inference for {X}-ray transforms.
\newblock {\em Ann. Statist.}, 47(2):1113--1147, 2019.

\bibitem{nat93}
F.~Natterer.
\newblock {Sampling in Fan Beam Tomography}.
\newblock {\em SIAM Journal on Applied Mathematics}, 53:358--380, 1993.

\bibitem{pal95}
V.~P. Palamodov.
\newblock {Localization of harmonic decomposition of the {Radon} transform}.
\newblock {\em Inverse Problems}, 11:1025--1030, 1995.

\bibitem{rk}
A.~Ramm and A.~Katsevich.
\newblock {\em {The {Radon} Transform and Local Tomography}}.
\newblock CRC Press, Boca Raton, Florida, 1996.

\bibitem{sha17}
N.~Saxena, R.~Hofmann, F.~O. Alpak, and Others.
\newblock {References and benchmarks for pore-scale flow simulated using
  {micro-CT} images of porous media and digital rocks}.
\newblock {\em Advances in Water Resources}, 109:211--235, 2017.

\bibitem{Saxena2019}
N.~Saxena, A.~Hows, R.~Hofmann, F.~O. Alpak, J.~Dietderich, M.~Appel,
  J.~Freeman, and H.~{De Jong}.
\newblock {Rock properties from micro-CT images: Digital rock transforms for
  resolution, pore volume, and field of view}.
\newblock {\em Advances in Water Resources}, 134(February), 2019.

\bibitem{Saxena2019a}
N.~Saxena, A.~Hows, R.~Hofmann, O.~Alpak, J.~Freeman, M.~Appel, and
  J.~Dietderich.
\newblock {Digital rock technology for accelerated RCA and SCAL: Application
  envelope and required corrections}.
\newblock {\em SPWLA 60th Annual Logging Symposium 2019}, pages 1--6, 2019.

\bibitem{Siltanen2003}
S.~Siltanen, V.~Kolehmainen, S.~J. Rvenp, J.~P. Kaipio, P.~Koistinen,
  M.~Lassas, J.~Pirttil, and E.~Somersalo.
\newblock Statistical inversion for medical {X}-ray tomography with few
  radiographs: I. general theory.
\newblock {\em Physics in Medicine and Biology}, 48(10):1437--1463, may 2003.

\bibitem{stef20}
P.~Stefanov.
\newblock {Semiclassical sampling and discretization of certain linear inverse
  problems}.
\newblock {\em SIAM Journal of Mathematical Analysis}, 52:5554--5597, 2020.

\bibitem{stef_23}
P.~Stefanov and S.~Tindel.
\newblock Sampling linear inverse problems with noise.
\newblock {\em Asymptot. Anal.}, 132(3-4):331--382, 2023.

\bibitem{trev2}
F.~Treves.
\newblock {\em {Introduction to Pseudodifferential and Fourier Integral
  Operators. Volume 2: Fourier Integral Operators}}.
\newblock The University Series in Mathematics. Plenum, New York, 1980.

\bibitem{Lassas_09}
S.~V\"{a}nsk\"{a}, M.~Lassas, and S.~Siltanen.
\newblock Statistical {X}-ray tomography using empirical {B}esov priors.
\newblock {\em Int. J. Tomogr. Stat.}, 11(S09):3--32, 2009.

\bibitem{Noo_noise_2008}
A.~Wunderlich and F.~Noo.
\newblock {Image covariance and lesion detectability in direct fan-beam X-ray
  computed tomography}.
\newblock {\em Physics in Medicine and Biology}, 53(10):2471--2493, 2008.

\end{thebibliography}
\end{document}